
\documentclass[12pt]{article}
\oddsidemargin -5pt
\textwidth 6.5in
\textheight 8.5in

\usepackage{amssymb,amsmath}

\newtheorem{thm}     {Theorem}[section]
\newtheorem{prop}    [thm]{Proposition}

\newtheorem{lemma}   [thm]{Lemma}

\newcommand{\proof} {\noindent{\bf Proof. }}

\newcommand{\B}{\mathbb B}
\newcommand{\C}{\mathbb C}
\newcommand{\D}{\mathbb D}
\def\H{\mathbb H}

\newcommand{\R}{\mathbb R}

\def\Re{{\rm Re\,}}
\def\Im{{\rm Im\,}}
\def\bar{\overline}
\def\Area{{\rm Area}}

\def\const{{\rm const}}
\def\<{\langle}
\def\>{\rangle}

\begin{document}

\title{Pseudoholomorphic discs and symplectic \\
structures in Hilbert space}
\author{Alexandre Sukhov{*} and Alexander Tumanov{**}}
\date{}
\maketitle

{\small

* Universit\'e des Sciences et Technologies de Lille, Laboratoire
Paul Painlev\'e,
U.F.R. de
Math\'e-matique, 59655 Villeneuve d'Ascq, Cedex, France. The author is partially supported by Labex CEMPI,
e-mail address: sukhov@math.univ-lille1.fr

** University of Illinois, Department of Mathematics
1409 West Green Street, Urbana, IL 61801, USA, e-mail address: tumanov@illinois.edu
}
\bigskip

Abstract. We develop the theory of $J$-holomorphic discs in Hilbert spaces with almost complex structures. As an aplication, we prove a version of Gromov's symplectic non-squeezing theorem
for Hilbert spaces. It can be applied to short-time symplectic flows of a wide class of Hamiltonian PDEs.
\bigskip

MSC: 32H02, 53C15.

Key words: symplectic diffeomorphism, Hilbert space, Hamiltonian PDE,  almost complex structure, $J$-complex disc.

\tableofcontents

\section{Introduction}

In his fundamental work, Gromov \cite{Gr} proposed a new approach to the symplectic geometry based on the theory of pseudoholomorphic curves in almost complex manifolds. Every symplectic manifold $(M,\omega)$  admits many almost complex structures $J$ in a natural co-relation with the symplectic structure $\omega$. A pseudoholomorphic (or $J$-holomorphic, or $J$-complex) curve in $(M,J)$ is a holomorphic map from a Riemann surface $X$ to $M$. In view of the uniformization theorems this is not surprising that the following two choices of $X$ are of major importance: the unit disc $\D$ and the Riemann sphere $S^2$. The corresponding $J$-holomorphic maps are called respectively $J$-holomorphic discs and $J$-holomorphic spheres. The spheres appear naturally in the case where $M$ is a compact manifold while the discs are appropriate when $M$ is a manifold with boundary. In the latter case usually some boundary conditions are imposed making the families of such discs (moduli spaces) finite dimensional. In turns out that moduli spaces of $J$-holomorphic discs or spheres reflect some important topological properties of the underlying symplectic structure $\omega$ and in some cases allow to describe efficiently symplectic invariants. Thus, Gromov's approach became a powerful tool of symplectic geometry.

Usual symplectic structures and Hamiltonians on smooth manifolds arise from the  classical mechanics and dynamical systems with finite-dimensional phase spaces. On the other hand, the  important classes of Hamiltonian PDEs of the mathematical physics, such as the (non-linear) Schr\"odinger equation, the Korteweg-de Vries equation and others, have a Hamiltonian nature too, but the corresponding symplectic structures and flows are defined on suitable Hilbert spaces (see for instance \cite{Ku2}). Usually these Hilbert spaces are Sobolev spaces, in which the existence and regularity of solutions of (the Cauchy problem for) Hamiltonian PDEs are established. In the finite-dimensional case, one of the most striking results of Gromov is the symplectic non-squeezing theorem describing quantitatively the topological rigidity of symplectic transformations. The first results of this type for various classes of Hamiltonian PDEs were obtained by Kuksin \cite{Ku1,Ku2} and later extended in the work of Bourgain \cite{Bou1,Bou2}, Colliander, Keel,  Staffilani, Takaoka, and Tao \cite{Tao}, and Roum\'egoux \cite {Rou}. Their approach is based on approximation of a symplectic flow in Hilbert space by finite-dimensional symplectic flows which reduces the situation to Gromov's theorem. This method requires a version of such approximation theory separately for each class of Hamiltonian PDEs. Recently Abbondandolo and Majer \cite{Ab} made a step toward a general version of the infinite-dimensional non-squeezing using the theory of symplectic capacities. In view of the above work it seems appropriate to extend Gromov's theory of pseudoholomorphic curves to the case of the Hilbert spaces equipped with symplectic and almost complex structures. The goal of the present paper is to develop an analog of Gromov's theory of pseudoholomorphic curves directly on Hilbert's spaces making it available for the study of symplectic flows of a wide class of Hamiltonian PDEs.

If $(M,J)$ is an almost complex manifold of complex dimension $n$, then pseudoholomorphic curves in $M$  are solutions of quasilinear ellipitic systems of PDEs with two independent and 2n dependent variables. Locally such a system can be viewed as a perturbation of the usual $\bar\partial$-system. Global problems lead to elliptic equations of Beltrami type which in general are large deformations of the usual Cauchy-Riemann equations. This allows us to apply methods of complex analysis and PDEs. The infinite-dimensional case requires the analysis of quasi-linear Beltrami type equations for Hilbert space-valued functions. The main method for solving the Beltrami equation for scalar-valued functions is based on the theory of singular integrals and the non-linear analysis, especially the fixed point theory. We use this approach in the present paper. We point out that our approach is rather different from Gromov's approach based on the compactness and transversality theory for pseudoholomorphic curves. The extension of Gromov's theory to infinite dimension remains a difficult open problem. As an application of our methods we prove in the last section a version of the non-squeezing theorem in Hilbert space for symplectic maps with small ``anti-holomorphic terms''. In particular, it can be applied to short-time symplectic flows or small perturbations of holomorphic symplectic flows associated with  Hamiltonian PDEs.

Our main results are contained in Section 4, 5 and 6. In Section 4 we establish the local existence and regularity of pseudoholomorphic curves in Hilbert spaces with almost complex structures. In Section 5 we
consider boundary value problems for pseudoholomorphic curves in Hilbert space. Our approach is based on methods developed in our previous work in the finite-dimensional case \cite{CoSuTu,SuTu3,SuTu1}. Using suitable singular integral operators related to the Cauchy integral, we reduce boundary value problems to integral equations in appropriate
function spaces and construct solutions by Schauder's fixed point theorem. Sections 2 and 3 are devoted to technical tools of our method and contain some necessary properties of almost complex structures in Hilbert spaces and some results on singular integral operators in spaces of vector functions. We present some of them in detail because of their importance for our approach and because we could not find references in the literature.

In conclusion we note that by using the methods and technical tools elaborated in the present paper, we obtain in \cite{SuTu4} a version of the non-squeezing theorem for general symplectic transformations under certain regularity and boundedness assumptions with respect to Hilbert scales (see Section 5). The proofs of the main results of the present paper are independent of \cite{SuTu4}.

\section{Almost complex structures on Hilbert spaces}

In this section we establish some basic facts on almost complex structures in Hilbert spaces. We always restrict to {\it separable} Hilbert spaces.

\subsection{Almost complex structures}

Let $V$ be a real vector space. If the dimension of $V$ is finite, we assume that it is even.  A {\it linear almost complex structure} $J$ on $V$ is a bounded linear operator $J: V \longrightarrow V$ satisfying $J^2 = -I$. Here and below, $I$ denotes the identity map or the identity matrix depending on the context.

\begin{thm}
\label{TheoEquiv}
Let $\H$ be a real Hilbert space. Then every two linear almost complex structures $J_k$, $k=1,2$, are equivalent. That is, there exists a bounded linear invertible operator $R:\H \to \H$ such that $J_1R = RJ_2$.
\end{thm}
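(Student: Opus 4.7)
\proof
The strategy is to show that each linear almost complex structure $J$ on $\H$ turns it into a complex Hilbert space $\H^J$, and then to invoke the classification of separable complex Hilbert spaces up to unitary isomorphism.

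Given a bounded $J$ with $J^2=-I$, the inverse $J^{-1}=-J$ is also bounded, so the form
\[
\langle x,y\rangle_J := \tfrac{1}{2}\bigl(\langle x,y\rangle + \langle Jx,Jy\rangle\bigr)
\]
defines an inner product on $\H$ equivalent to the original. By construction $\langle Jx,Jy\rangle_J=\langle x,y\rangle_J$, so $J$ is an isometry and (squaring to $-I$) skew-symmetric with respect to $\langle\cdot,\cdot\rangle_J$. Define complex scalar multiplication by $(a+ib)\cdot x := ax+bJx$ and the Hermitian form
\[
h_J(x,y) := \langle x,y\rangle_J - i\langle Jx,y\rangle_J.
\]
A direct computation using $J$-invariance shows that $h_J$ is $\C$-linear in the first argument, conjugate-linear in the second, Hermitian, and positive-definite with associated norm $\|\cdot\|_J$. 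Hence $\H^J:=(\H,+,\cdot,h_J)$ is a complex Hilbert space, and separability is inherited since an $\R$-dense countable set is automatically $\C$-dense.

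Next, I would check that the complex dimension of $\H^J$ depends only on $\H$, not on $J$. Indeed, if $\{e_\alpha\}$ is a complex orthonormal basis of $\H^J$, then $h_J(e_\alpha,e_\beta)=\delta_{\alpha\beta}\in\R$ forces $\langle Je_\alpha,e_\beta\rangle_J=0$, so $\{e_\alpha\}\cup\{Je_\alpha\}$ is a real orthonormal basis of $(\H,\langle\cdot,\cdot\rangle_J)$. Hence the complex dimension is $n$ when $\dim_\R\H=2n<\infty$ and equals $\aleph_0$ in the separable infinite-dimensional case. Applying this to $J_1$ and $J_2$, the spaces $\H^{J_1}$ and $\H^{J_2}$ are separable complex Hilbert spaces of the same complex dimension.

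By the standard classification, there exists a $\C$-linear surjective isometry $R:\H^{J_2}\to\H^{J_1}$. Complex linearity at the scalar $i$ reads $R(J_2 x)=J_1(Rx)$, which is precisely $RJ_2=J_1 R$. Since both $\|\cdot\|_{J_1}$ and $\|\cdot\|_{J_2}$ are equivalent to the original Hilbert norm on $\H$, the $\langle\cdot,\cdot\rangle$-boundedness and $\R$-linear invertibility of $R$ on $\H$ follow from its being a $\langle\cdot,\cdot\rangle_J$-isometry. The main points requiring care are the verification that $h_J$ is a positive-definite Hermitian form inducing a complete complex Hilbert structure — a routine check once $\langle\cdot,\cdot\rangle_J$ has been arranged to be $J$-invariant — and the observation that the complex dimension is an invariant of $\H$ alone; everything else is formal.
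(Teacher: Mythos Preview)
Your proof is correct and follows essentially the same approach as the paper: construct a $J$-invariant equivalent inner product (your $\langle\cdot,\cdot\rangle_J$ differs from the paper's $\|h\|_n^2=\|h\|^2+\|Jh\|^2$ only by a factor of $2$), promote $\H$ to a complex Hilbert space with $i$ acting as $J$, and invoke the classification of separable complex Hilbert spaces. Your write-up is in fact more explicit than the paper's, spelling out the Hermitian form $h_J$, the dimension argument, and the passage back to the original norm.
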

Since all (separable and $\infty$-dimensional) complex Hilbert spaces are isometrically isomorphic, the above theorem follows by
\begin{prop}
Let $\H$ be a real Hilbert space. Let also $J$ be a linear almost complex structure on $\H$. Then there is an equivalent norm and an inner product on $\H$ making it into a complex Hilbert space.
\end{prop}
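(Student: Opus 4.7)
The plan is to implement the classical averaging trick: replace the given inner product by one for which $J$ becomes an isometry, then use $J$ to define the complex scalar multiplication, and finally combine the real part with the symplectic form $\langle J\cdot,\cdot\rangle_J$ to produce a Hermitian inner product.

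First I would define the new bilinear form
\[
\langle u,v\rangle_J := \langle u,v\rangle + \langle Ju,Jv\rangle,
\]
where $\langle\cdot,\cdot\rangle$ is the original inner product on $\H$. It is immediate that $\langle\cdot,\cdot\rangle_J$ is symmetric, bilinear, and positive definite. Since $J$ is bounded, one has the two-sided estimate
\[
\|u\|^2 \;\le\; \|u\|_J^2 \;\le\; (1+\|J\|^2)\|u\|^2,
\]
so the induced norm is equivalent to the original one; in particular $(\H,\langle\cdot,\cdot\rangle_J)$ is a real Hilbert space. Using $J^2=-I$, a direct computation gives $\langle Ju,Jv\rangle_J=\langle u,v\rangle_J$, i.e.\ $J$ is an orthogonal operator for the new inner product, and consequently $\langle Ju,v\rangle_J = -\langle u,Jv\rangle_J$.

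Next I would promote $\H$ to a complex vector space by setting
\[
(a+bi)\cdot u := au + bJu,\qquad a,b\in\R,\ u\in\H.
\]
The identity $J^2=-I$ makes this a well-defined complex scalar multiplication. Then I would introduce the candidate Hermitian form
\[
(u,v) := \langle u,v\rangle_J - i\,\langle Ju,v\rangle_J.
\]
The verifications to carry out are: (i) real-bilinearity in each slot is clear; (ii) the symmetry $\langle Ju,v\rangle_J=-\langle Jv,u\rangle_J$ established above yields the conjugate symmetry $(v,u)=\overline{(u,v)}$; (iii) the relations $\langle Ju,Jv\rangle_J=\langle u,v\rangle_J$ and $\langle Ju,v\rangle_J=-\langle u,Jv\rangle_J$ give $(Ju,v)=i(u,v)$ and $(u,Jv)=-i(u,v)$, which combined with real-bilinearity produces full complex sesquilinearity; (iv) antisymmetry of $\langle J\cdot,\cdot\rangle_J$ forces $\langle Ju,u\rangle_J=0$, hence $(u,u)=\|u\|_J^2$, so the form is positive definite and its norm coincides with $\|\cdot\|_J$. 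Completeness is then inherited from completeness in the equivalent norm $\|\cdot\|$.

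There is no real obstacle here; the content of the argument is the choice of the averaging formula for $\langle\cdot,\cdot\rangle_J$ and of the combination $\langle u,v\rangle_J - i\langle Ju,v\rangle_J$. The only mild subtlety is the use of the bound $\|Ju\|\le\|J\|\,\|u\|$, which relies on the assumption that $J$ is a bounded operator; this is what ensures norm equivalence and therefore completeness of the resulting complex Hilbert space. Theorem~\ref{TheoEquiv} then follows from the proposition together with the fact that all separable infinite-dimensional complex Hilbert spaces are isometrically isomorphic, as indicated in the text.
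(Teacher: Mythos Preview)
Your proof is correct and follows essentially the same approach as the paper: both introduce the averaged inner product $\langle u,v\rangle_J=\langle u,v\rangle+\langle Ju,Jv\rangle$ (equivalently the norm $\|h\|_n^2=\|h\|^2+\|Jh\|^2$), use boundedness of $J$ for norm equivalence, and observe $J$-invariance. The paper stops after noting the new norm is $J$-invariant and satisfies the parallelogram identity, leaving the complex Hilbert space structure implicit, whereas you spell out the Hermitian form $(u,v)=\langle u,v\rangle_J - i\langle Ju,v\rangle_J$ explicitly; this is the same argument with more detail filled in.
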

\begin{proof} Introduce a new norm $\| \bullet \|_n$ on $\H$:
$$
\| h \|_n^2 = \| h \|^2 + \| J h \|^2, \; h \in \H.
$$
Since $J$ is a bounded operator, the new norm is equivalent to the original one.
It is immediate that the new norm is $J$-invariant: $\| Jh \|_n^2 = \| h \|_n^2$ and verifies the parallelogram identity. Hence, this norm is a Hilbert space norm.
\end{proof} $\blacksquare$

Let $\H$ be a complex Hilbert space with Hermitian  scalar product $\langle \bullet, \bullet \rangle$. Fix an orthonormal basis $\{ e_j \}_{j=1}^\infty$ in $\H$ such that $Z = \sum_{j=1}^\infty Z_j e_j$ for every $Z \in \H$. Here $Z_j = x_j + iy_j= \langle Z,e_j \rangle$ are complex coordinates of $Z$. Put $\bar{Z} = \sum_{j=1}^\infty \bar{Z}_j e_j$. The {\it standard} almost complex structure $J_{st}$ on $\H$ is a real linear operator on $\H$ defined by $J_{st}(Z) = iZ$. In the case where $\H$ has a finite dimension $n$ the structure $J_{st}$  is the usual complex structure on $\C^n$; we do not specify the dimension (finite or infinite) in this notation since it will be clear from the context.

In what follows we also use the standard notation
$$
f_Z = \frac{\partial f}{\partial Z} = \frac{1}{2} \left ( \frac{\partial f}{\partial x} - i\frac{\partial f}{\partial y} \right), \,\,\,\,\,\,  f_{\bar Z} = \frac{\partial f}{\partial \bar Z}=\frac{1}{2}\left ( \frac{\partial f}{\partial x} + i\frac{\partial f}{\partial y} \right ).
$$

The {\it standard symplectic form} $\omega$ on $\H$ is a nondegenerate antisymmetric bilinear form defined by
$$\omega = \frac{i}{2}\sum_{j=1}^\infty dZ_j \wedge d\bar{Z}_j.$$
We  use the natural identification of $\H$ with its tangent space at every point.  Denote by ${\mathcal L}(\H)$  the space  of real linear bounded operators on $\H$. {\it An almost complex structure} $J$ on $\H$ is a continuous map   $J: \H \to {\mathcal L}(\H)$,   $J: \H \ni Z \to J(Z)$ such that every $J(Z)$  satisfies $J^2(Z) = -I$. Such a structure is {\it tamed}  by  $\omega$  if   $\omega(u,Ju) > 0$ for every $u \in \H$. An almost complex structure $J$ is {\it compatible} with $\omega$ if it is tamed and $\omega(Ju,Jv) = \omega(u,v)$ for all $u,v \in \H$.  Note that  $J_{st}$ is compatible with $\omega$. When the map $Z \mapsto J(Z)$ is independent of $Z$, we can identify the tangent space of $\H$ at $Z$ with $\H$ and view $J$ as a linear almost complex structure on $\H$.

A $C^1$-map $f:(\H,J') \to (\H,J)$ is called $(J',J)$-{\it holomorphic} if it satisfies {\it the Cauchy-Riemann equations}
\begin{eqnarray}
\label{CRglobal0}
J \circ df= df \circ J'.
\end{eqnarray}

When $f:\H \to \H$ is a diffeomorphism and $J'$ is an almost complex structure on $\H$, one can consider its direct image defined by
$$
f_*(J') = df \circ J' \circ df^{-1}.
$$

Of course, $f$ is $(J',f_*(J'))$-holomorphic.

Denote by  $\D = \{ \zeta \in \C : | \zeta | < 1 \}$  the unit disc in $\C$. It is equipped with the standard complex structure $J_{st}$ of $\C$. Let $J$ be an almost complex structure on $\H$. A $C^1$- map
$f:\D \to \H$ is called  a $J$-{\it complex disc} in $\H$ if it satisfies (\ref{CRglobal0}), i.e.,
\begin{eqnarray}
\label{CRglobal}
J \circ df= df \circ J_{st}.
\end{eqnarray}

A $C^1$-difeomorphism $\Phi: \Omega_1 \to \Omega_2$ between two open  subsets $\Omega_j$ in $(\H,\omega)$ is called a {\it symplectomorphism} if $\Phi^*\omega = \omega$. Here the star denotes the pull-back. For a map $Z: \D \to \H$, $Z: \zeta \mapsto Z(\zeta)$ its (symplectic) {\it area}  is defined by
\begin{eqnarray}
\label{area}
\Area(Z) = \int_\D Z^*\omega
\end{eqnarray}
similarly to the finite-dimensional case. If $Z$ is a $J$-complex disc, its symplectic area coincides with the area induced by the Riemannian metric canonically defined by $J$ and $\omega$. Hence if $Z$ is $J_{st}$-holomorphic,  (\ref{area}) represents its  area induced by the inner product of $\H$.

\subsection{Cauchy-Riemann equations}

All linear operators in this subsection are bounded. For an $\R$-linear operator $F: \H \to \H$ we denote by $F^*$  its adjoint, that is, $\Re \langle Fu,v \rangle = \Re \langle u, F^* v \rangle$. Put
\begin{eqnarray*}
\bar{F} Z = \bar{(F \bar Z)} , \,\,\, \mbox{and} \,\,\, F^{t} = \bar{ F^*}.
\end{eqnarray*}
Thus $F^t$ is the transpose of $F$.
Every $\R$-linear operator $F: \H \to \H$ has the form
$$
F u = P u + Q \bar{u},
$$
where $P$ and $Q$ are $\C$-linear operators. For brevity we write
$$
F = \{ P, Q\}.
$$
Note that
$$
F^* = \{ P^*, Q^t \}, \; F^t = \{ P^t, Q^* \}.
$$

\begin{lemma}
\label{LemAC1}
Let $F= \{ P, Q \}$. Then $F$ preserves $\omega$ i.e.
$\omega(Fu,Fv) =\omega(u,v)$ if and only if
\begin{eqnarray}
\label{identity1}
P^* P - Q^t \bar{Q} = I \;
\mbox{and}\;
P^t \bar{Q} - \bar{Q}^t P= 0.
\end{eqnarray}
\end{lemma}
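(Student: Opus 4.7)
The plan is to translate the $\omega$-invariance into a single operator identity and then expand it in the $\{P,Q\}$ notation. First, evaluating $\omega = \tfrac{i}{2}\sum dZ_j \wedge d\bar Z_j$ on a pair of tangent vectors gives
\[
\omega(u,v) = -\Im\langle u,v\rangle = \Re\langle J_{st} u,\, v\rangle.
\]
Moving $F$ to the left via the real adjoint, $\omega(Fu,Fv)=\omega(u,v)$ for all $u,v$ becomes $\Re\langle F^*J_{st}F u,v\rangle = \Re\langle J_{st} u,v\rangle$ for all $u,v$, equivalently the single operator identity $F^*J_{st}F = J_{st}$ on $\H$.

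Next I would compute $F^*J_{st}F$ in the $\{\cdot,\cdot\}$ notation. Since $J_{st}u=iu$ is $\C$-linear, $J_{st}=\{iI,0\}$. Expanding $\{P_1,Q_1\}(P_2u+Q_2\bar u)$ and using $\overline{Aw}=\bar A\bar w$ for $\C$-linear $A$ yields the composition rule
\[
\{P_1,Q_1\}\circ\{P_2,Q_2\} = \{\,P_1P_2 + Q_1\bar{Q_2},\ P_1Q_2 + Q_1\bar{P_2}\,\}.
\]
Applying this twice, together with $\overline{iA}=-i\bar A$, one finds $J_{st}F=\{iP,iQ\}$ and then
\[
F^*J_{st}F = \{P^*,Q^t\}\circ\{iP,iQ\} = \{\,i(P^*P-Q^t\bar Q),\ i(P^*Q - Q^t\bar P)\,\}.
\]

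Comparing with $J_{st}=\{iI,0\}$ component-wise, which is legitimate because the splitting of an $\R$-linear operator into its $\C$-linear and $\C$-antilinear parts is unique, the identity $F^*J_{st}F=J_{st}$ becomes the pair
\[
P^*P - Q^t\bar Q = I, \qquad P^*Q - Q^t\bar P = 0.
\]
The first is already the first relation of the lemma. To put the second in the stated form, I would apply the bar-operation $\bar{\,\cdot\,}$ to $P^*Q - Q^t\bar P = 0$, using $\bar{AB}=\bar A\bar B$, $\bar{P^*}=P^t$, and $\bar{Q^t}=Q^*=\bar Q^t$ (this last identity is the routine commutation of $\bar{\,\cdot\,}$ with $\cdot^*$, immediate from the definitions). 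The equation then becomes $P^t\bar Q - \bar Q^t P = 0$, the second identity of the lemma. Every step is an equivalence, so both directions of the iff are handled at once. The only real hazard is keeping track of the interplay among the three operations $\bar{\,\cdot\,}$, $\cdot^*$, $\cdot^t$, but these all pairwise commute and square to the identity, so the bookkeeping is mechanical.
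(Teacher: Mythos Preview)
Your argument is correct. You translate $\omega$-invariance into the operator identity $F^*J_{\st}F=J_{\st}$ via $\omega(u,v)=\Re\langle J_{\st}u,v\rangle$, then expand both sides using the composition rule for the $\{P,Q\}$ notation; the bookkeeping with $\bar{\,\cdot\,}$, $\cdot^*$, $\cdot^t$ is fine, and in particular $\bar Q^t=\overline{Q^t}=Q^*$ so your final rewriting of the second relation is legitimate.

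The paper takes a different packaging. Instead of working with $F$ directly, it passes to the doubled space $\H\oplus\H$, representing $F=\{P,Q\}$ by the block operator
\[
R=\begin{pmatrix}P&Q\\\bar Q&\bar P\end{pmatrix},
\]
and observes that $F$ preserves $\omega$ exactly when $R^t\Lambda R=\Lambda$ with $\Lambda=\begin{pmatrix}0&-I\\I&0\end{pmatrix}$; expanding the matrix product gives the two identities. This is really the same computation viewed through the standard ``complexification'' trick $u\leftrightarrow(u,\bar u)$, so the difference is organizational rather than conceptual. The payoff of the paper's formulation shows up in the next lemma: from $R^t\Lambda R=\Lambda$ one gets $R\Lambda R^t=\Lambda$ by a one-line manipulation (multiply by $R\Lambda$ on the left and $R^{-1}\Lambda$ on the right), which immediately yields the companion identities for $F^t$. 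Your direct approach would need a separate, if equally short, argument there.
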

\begin{proof} Consider
$$R =   \left(
\begin{array}{cll}
P & & Q\\
\bar{Q} & & \bar{P}
\end{array}
\right)$$
as a linear operator on $\H \oplus \H$. Then $F$ preserves $\omega$ if and only if $R$ preserves a bilinear form on $\H \oplus \H$ with the matrix

$$\Lambda = \left(
\begin{array}{cll}
0 & & -I\\
I & & 0
\end{array}
\right),$$
that is $R^t \Lambda R = \Lambda$. This is equivalent to (\ref{identity1}).
\end{proof} $\blacksquare$

A linear operator $F:\H \to \H$ is called a {\it linear symplectomorphism} if $F$ preserves $\omega$ and is invertible.

\begin{lemma}
\label{LemAC2}
Let $F = \{ P, Q \}$ be a linear symplectomorphism. Then $F^t$ also preserves $\omega$. Hence
\begin{eqnarray}
\label{identity2}
P P^* - Q Q^* = I\,\,\,\,\mbox{and}\,\,\,P Q^t - Q P^t = 0.
\end{eqnarray}
\end{lemma}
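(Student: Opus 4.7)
The plan is to first show that $F^t$ itself preserves $\omega$, and then to apply the previous lemma to $F^t=\{P^t,Q^*\}$ to read off the identities (\ref{identity2}).

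For the first step, I return to the block-matrix reformulation used in the proof of the previous lemma: $F$ preserves $\omega$ if and only if the operator $R$ on $\H\oplus\H$ with blocks $P, Q, \bar Q, \bar P$ satisfies $R^t\Lambda R=\Lambda$. Since $F$ is a symplectomorphism, $R$ is invertible. Exploiting $\Lambda^2=-I$, I extract the companion relation $R\Lambda R^t=\Lambda$ as follows: $R^t\Lambda R=\Lambda$ gives $R^t=\Lambda R^{-1}\Lambda^{-1}$, hence $R\Lambda R^t=R\Lambda^2R^{-1}\Lambda^{-1}=-\Lambda^{-1}=\Lambda$. A short bookkeeping check (using $A^t=\bar{A^*}$, $\bar{\bar A}=A$, together with $(\bar A)^*=\bar{A^*}$) shows that the operator $R^t$ has block entries $P^t, Q^*, Q^t, P^*$, which is precisely the block matrix associated to $F^t=\{P^t,Q^*\}$. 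Thus the equation $R\Lambda R^t=\Lambda$ is exactly the statement that $F^t$ preserves $\omega$.

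For the second step, I apply the previous lemma to $F^t$ to obtain $(P^t)^*P^t-(Q^*)^t\bar{Q^*}=I$ and $(P^t)^t\bar{Q^*}-\bar{(Q^*)^t}P^t=0$. Using the identities $(P^t)^*=\bar P$, $(Q^*)^t=\bar Q$, $\bar{Q^*}=Q^t$, $(P^t)^t=P$ and $\bar{(Q^*)^t}=Q$, these simplify to $\bar P\,P^t-\bar Q\,Q^t=I$ and $PQ^t-QP^t=0$. The second is already the second identity of (\ref{identity2}); taking the complex conjugate of the first and invoking $\bar{P^t}=P^*$, $\bar{Q^t}=Q^*$ yields $PP^*-QQ^*=I$.

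The main obstacle is the passage from $R^t\Lambda R=\Lambda$ to $R\Lambda R^t=\Lambda$: in the infinite-dimensional setting this is not a formal group-theoretic tautology but genuinely requires both the invertibility of $R$ supplied by the symplectomorphism hypothesis and the special relation $\Lambda^2=-I$. The remainder of the argument is careful bookkeeping of the three involutions (bar, adjoint, transpose), where a single sign or ordering slip would propagate through the entire computation.
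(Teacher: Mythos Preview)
Your proof is correct and follows essentially the same approach as the paper's: both use the block matrix $R$, pass from $R^t\Lambda R=\Lambda$ to $R\Lambda R^t=\Lambda$ using invertibility of $R$ and $\Lambda^2=-I$, and then read off the identities. The paper does the passage by multiplying on the left by $R\Lambda$ and on the right by $R^{-1}\Lambda$, while you solve for $R^t$ first; these are the same manipulation. Your second step, explicitly identifying the block structure of $R^t$ with the matrix of $F^t$ and then invoking Lemma~\ref{LemAC1}, spells out what the paper compresses into the single phrase ``the latter is equivalent to (\ref{identity2}).''
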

\begin{proof}
Since $F$ preserves $\omega$, we have
$R^t \Lambda R = \Lambda$. The operator $R$ is invertible because $F$ is. Multiplying by $R \Lambda$ from the left and by $R^{-1}\Lambda$ from the right, we obtain $R\Lambda R^t = \Lambda$.
The latter  is equivalent to (\ref{identity2}).
\end{proof} $\blacksquare$

\begin{prop}
\label{PropAC3}
If $F = \{ P, Q \}$ is a linear symplectomorphism, then
\begin{itemize}
\item[(a)] $ F^{-1} = \{ P^*, -Q^t \}$;
\item[(b)] $P$ is invertible;
\item[(c)] $\| Q \bar{P}^{-1} \| = \| Q \| ( 1 + \| Q \| ^2)^{-1/2} < 1$.
\end{itemize}
\end{prop}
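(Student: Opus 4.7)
The plan is to dispatch the three parts in order, using the identities (\ref{identity1}), (\ref{identity2}), their complex conjugates, and the composition rule
$$\{A,B\}\circ\{C,D\}=\{AC+B\bar D,\; AD+B\bar C\}$$
for operators in $\{P,Q\}$-form (obtained by direct substitution). For part (a), I verify directly that $G:=\{P^*,-Q^t\}$ is a two-sided inverse of $F$. Applying the composition rule together with $\bar{P^*}=P^t$ and $\bar{Q^t}=Q^*$ gives
$$FG=\{PP^*-QQ^*,\; QP^t-PQ^t\},\qquad GF=\{P^*P-Q^t\bar Q,\; P^*Q-Q^t\bar P\}.$$
The diagonal entries reduce to $I$ by (\ref{identity2}) and (\ref{identity1}) respectively. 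The vanishing $QP^t-PQ^t=0$ is exactly (\ref{identity2}); the vanishing $P^*Q-Q^t\bar P=0$ is obtained by complex-conjugating (\ref{identity1}) (which reads $P^t\bar Q-\bar Q^t P=0$) and using $\bar{P^t}=P^*$ and $\bar Q^t=Q^*$.

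For part (b), the plan is to show that $PP^*\ge I$ and $P^*P\ge I$ as $\R$-self-adjoint operators. From (\ref{identity2}), $PP^*=I+QQ^*\ge I$ since $QQ^*\ge 0$. From (\ref{identity1}), $P^*P=I+Q^t\bar Q$, and a brief computation using $(\bar Q)^*=Q^t$ gives $\langle Q^t\bar Q u,u\rangle=\|Q\bar u\|^2\ge 0$, so $P^*P\ge I$. Hence $P$ is bounded below (injective with closed range) and $P^*$ is bounded below (so $\ker P^*=0$, giving $P$ dense range); combined, $P$ is bijective with bounded inverse.

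For part (c), the key reduction is to show
$$(Q\bar P^{-1})^*(Q\bar P^{-1})=A(I+A)^{-1},\qquad A:=\bar Q Q^t.$$
Indeed, since $(\bar P)^*=P^t$, the left side equals $(P^t)^{-1}Q^*Q\bar P^{-1}$. Complex-conjugating (\ref{identity1}) yields $P^t\bar P=I+Q^*Q$, so $Q^*Q=P^t\bar P-I$. Substituting gives $I-(\bar P P^t)^{-1}$. Complex-conjugating (\ref{identity2}) produces $\bar P P^t=I+\bar Q Q^t=I+A$, so the expression collapses to $I-(I+A)^{-1}=A(I+A)^{-1}$. Since $A=\bar Q(\bar Q)^*$ is $\C$-linear, non-negative self-adjoint, with $\|A\|=\|\bar Q\|^2=\|Q\|^2$, the functional calculus gives $\|A(I+A)^{-1}\|=\|A\|/(1+\|A\|)$; taking square roots yields the claim, and the strict inequality is immediate.

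The main obstacle is bookkeeping: one must juggle complex conjugation, transpose, and the two notions of adjoint ($\R$ and $\C$) and identify which variant of the identities from Lemmas \ref{LemAC1} and \ref{LemAC2} is relevant at each step. In particular, parts (a) and (c) both use \emph{complex conjugates} of those identities rather than the identities themselves, and the spectral estimate in (c) depends on $A=\bar Q Q^t$ being $\C$-linear and self-adjoint, which follows from $(\bar Q)^*=Q^t$.
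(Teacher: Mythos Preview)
Your proof is correct and follows essentially the same approach as the paper's. The only cosmetic difference is in part (c): you compute $(Q\bar P^{-1})^*(Q\bar P^{-1})$ and reduce it to $A(I+A)^{-1}$ with $A=\bar Q Q^t=\bar Q(\bar Q)^*$, while the paper computes $(Q\bar P^{-1})(Q\bar P^{-1})^*$ and reduces it to $(I+QQ^*)^{-1}QQ^*$; both lead to the same spectral-mapping estimate $\|Q\|^2/(1+\|Q\|^2)$.
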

\begin{proof} (a) follows by (\ref{identity1}) and (\ref{identity2}). By (\ref{identity1}) and (\ref{identity2}), spectral values of the self-adjoint operators $P P^*$ and $P^* P$ are not smaller that $1$. Then both $P^* P$ and $PP^*$ are invertible which gives (b). For (c), put $A = Q \bar{P}\,^{-1}$. We estimate $\| A \| = \| A A^* \|^{1/2}$.
By (\ref{identity1}) and (\ref{identity2}) respectively, we have $Q \bar{P}\,^{-1} =\bar{P^t}\,^{-1}Q^t$ and $Q^t (P^t)^{-1} = P^{-1}Q$. Using the latter, $A A^* = (P P^*)^{-1} Q Q^*$. Since $P P^* = I + Q Q^*$ and $Q Q^*$ is self-adjoint, by the spectral mapping theorem
$$
\| A A^* \| = \frac{\| Q Q^* \|}{1 + \| Q Q^* \|} = \frac{\| Q \|^2}{1 + \| Q \|^2}
$$
because the function $\lambda \mapsto \lambda(1+ \lambda)^{-1}$  is increasing for $\lambda>0$.
\end{proof} $\blacksquare$

Assume that $J$ is an almost complex structure tamed by $\omega$. Then $\omega(h,(J_{st} + J)h) > 0$ for all $h \neq 0$ and the operator $J_{st} + J$ is injective.
In the finite dimensional case this implies that the operator is invertible. In the Hilbert case this is not so immediate although one can show that this is also always true.  Assume that  for all $Z \in \H$ the operator
\begin{eqnarray}
\label{oper0}
(J_{st} + J)(Z)
\end{eqnarray}
 is invertible.   Then the linear operator
\begin{eqnarray}
\label{oper1}
L:= (J_{st} + J)^{-1}(J_{st} - J)
\end{eqnarray}
is defined and bounded.

\begin{lemma}
The operator (\ref{oper1}) is $\C$-antilinear.
\end{lemma}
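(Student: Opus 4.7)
The plan is to verify the defining identity $L \circ J_{st} = -J_{st} \circ L$ for $\C$-antilinearity by a short algebraic manipulation. Set $A := J_{st}+J$ and $B := J_{st}-J$, so that $L = A^{-1}B$ and $A+B = 2J_{st}$.

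First I would establish that $A$ and $B$ anti-commute. Using only the relations $J^2 = J_{st}^2 = -I$, a direct expansion gives
\[
AB = (J_{st}+J)(J_{st}-J) = -I - J_{st}J + JJ_{st} + I = [J,J_{st}],
\]
\[
BA = (J_{st}-J)(J_{st}+J) = -I + J_{st}J - JJ_{st} + I = -[J,J_{st}],
\]
so $AB+BA = 0$. Since $A$ is invertible by the standing hypothesis, multiplying $AB = -BA$ by $A^{-1}$ on both sides yields the equivalent relation $BA^{-1} = -A^{-1}B$.

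Next I would decompose $2J_{st} = A+B$ and expand both sides of the target identity:
\[
2 L J_{st} = A^{-1}B(A+B) = A^{-1}(BA) + A^{-1}B^2 = -B + A^{-1}B^2,
\]
\[
2 J_{st} L = (A+B)A^{-1}B = B + (BA^{-1})B = B - A^{-1}B^2,
\]
using $BA = -AB$ in the first line and $BA^{-1} = -A^{-1}B$ in the second. Adding the two identities gives $L J_{st} + J_{st} L = 0$, which is precisely $\C$-antilinearity.

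The argument is purely formal, everything reducing to the anti-commutation $AB = -BA$, which itself is forced by $J^2 = J_{st}^2 = -I$. I do not anticipate any genuine obstacle here; the only point requiring care is to invoke the hypothesis that $J_{st}+J$ is boundedly invertible, so that $L$ is well-defined and the cancellations by $A^{-1}$ on either side are legitimate operator identities rather than formal symbol pushing.
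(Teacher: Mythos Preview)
Your proof is correct. Both arguments verify $LJ_{st}=-J_{st}L$ by pure algebra using only $J^2=J_{st}^2=-I$ and the invertibility of $J_{st}+J$, but the organization differs.

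The paper first rewrites $L$ in the form $(I-J_{st}J)^{-1}(I+J_{st}J)$ and then strings together four separate identities involving $J$, $J_{st}$, and $I\pm J_{st}J$ (their (2.7)--(2.10)) to push $J_{st}$ across $L$. You instead work directly with $A=J_{st}+J$, $B=J_{st}-J$, isolate a single anti-commutation $AB=-BA$, and exploit the decomposition $2J_{st}=A+B$. Your route is shorter and makes the mechanism transparent: everything reduces to that one anti-commutation. The paper's rewriting in terms of $I\pm J_{st}J$ is less economical for this lemma but pays off later, since the form $L=(I+B)^{-1}(I-B)$ with $B=-J_{st}J$ is exactly what is used in Proposition~\ref{PropAC4} and Proposition~\ref{PropAC7}.
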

\begin{proof} Note
\begin{eqnarray}
\label{com1}
(I + J_{st}J)^{-1} \,\,\,\mbox{and}\,\,\,(I + J_{st}J)\,\,\,\mbox{commute},
\end{eqnarray}
\begin{eqnarray}
\label{com2}
(I + J_{st}J)J = -J_{st}(I+J_{st}J),
\end{eqnarray}
\begin{eqnarray}
\label{com3}
(I-J_{st}J) = J_{st}(I-J_{st}J).
\end{eqnarray}
Then (\ref{com3}) implies
\begin{eqnarray}
\label{com4}
J(I - J_{st}J)^{-1} = (I - J_{st} J)^{-1}J_{st}.
\end{eqnarray}
We show $LJ_{st} = -J_{st}L$.
Using successively (\ref{com1}), (\ref{com4}), (\ref{com2}),
and (\ref{com1}) we obtain
\begin{eqnarray*}
& &LJ_{st} = (I - J_{st}J)^{-1}(I + J_{st}J)J_{st} = (I + J_{st}J)(I-J_{st}J)^{-1}J_{st} =\\
& &(I + J_{st}J)J(I-J_{st}J)^{-1} = -J_{st}(I + J_{st}J)
(I-J_{st}J)^{-1} = -J_{st}L.
\end{eqnarray*}
\end{proof} $\blacksquare$
\smallskip

Thus, if we view $(\H,J_{st})$ as a complex vector space, the action of $L$ can be expressed in the form
$$ L h = A_J \bar{h}$$
where $A_J: \H \to \H$ is a bounded $J_{st}$-linear operator. We call $A_J$ the {\it complex representation} of $J$ and often omit $J$.  With this convention the Cauchy-Riemann equations (\ref{CRglobal}) for a $J$-complex disc $Z:\D\to\H$, $Z: \D \ni \zeta \mapsto Z(\zeta)$ can be  written  in  the form
\begin{eqnarray}
\label{holomorphy}
Z_{\bar\zeta}=A_J(Z)\bar Z_{\bar\zeta},\quad
\zeta\in\D.
\end{eqnarray}

In the present paper an almost complex structure $J$ will arise as the direct image $J = \Phi_*(J_{st}):= d\Phi \circ J_0 \circ d\Phi^{-1}$ of $J_{st}$ under a symplectomorpfism $\Phi: (\H,\omega) \to (\H,\omega)$. We discuss assumptions on $\Phi$ that allow to deduce the equations (\ref{holomorphy}).

Let $\Phi:G' \to G$ be a symplectomorphism of class $C^1$ between  open bounded subsets $G'$ and $G$ of $\H$.  Suppose  that  the tangent maps $d\Phi$ {\it are  uniformly bounded on $G'$}. Then by Proposition  \ref{PropAC3} (a), the tangent maps $d\Phi^{-1}$ also are uniformly bounded. Set $J = \Phi_*(J_{st})$.

Put $P(Z) = \Phi_Z(Z)$ and $Q = \Phi_{\bar Z}(Z)$.  Since the operator $P$ is invertible by Proposition \ref{PropAC3} (b), it follows from \cite{SuTu2} that
\begin{eqnarray}
\label{A}
A_J = Q \bar P\,^{-1}.
\end{eqnarray}
Indeed, the proof of Lemma 2.3 from \cite{SuTu2} can be carried to the Hilbert case without changes and gives (\ref{A}). Hence  Proposition \ref{PropAC3} (c) implies that there exists a constant $0 < a < 1$ such that
\begin{eqnarray}
\label{norm2}
\| A_J(Z) \| \leq a < 1
\end{eqnarray}
for all $Z \in G$.

\begin{lemma}
\label{LemInverI}
Let $B: X \to X$ be a linear operator on a real Hilbert space $X$. Suppose $\langle Bx, x \rangle > 0$ for all $x \neq 0$.
Then $I + B$ is invertible and $\| Lx \| < \| x \|$, $x \neq 0$, here $L = (I + B)^{-1}(I-B)$. Conversely, if
$I + B$ is invertible and $\| L x \| < \| x \|$ for all $x \neq 0$, then $\langle Bx,x\rangle > 0$ for all $x \neq 0$.
\end{lemma}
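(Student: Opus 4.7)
The whole lemma hinges on the identity
$$
\|(I+B)v\|^2 - \|(I-B)v\|^2 = 4\langle Bv,v\rangle,
$$
obtained by expanding both squares and using $\langle Bv,v\rangle=\langle v,Bv\rangle$, which holds in a real Hilbert space. Once this identity is in place, the two directions of the lemma reduce to substituting $v=(I+B)^{-1}x$ or $x=(I+B)v$.

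For the forward direction, the expansion $\|(I+B)v\|^2=\|v\|^2+2\langle Bv,v\rangle+\|Bv\|^2$ shows that the hypothesis gives $\|(I+B)v\|\geq \|v\|$, with strict inequality for $v\neq 0$. This already yields injectivity and closed range of $I+B$. To promote this to invertibility I would pass to the adjoint: by symmetry of the real inner product $\langle B^*v,v\rangle=\langle Bv,v\rangle>0$ for $v\neq 0$, so the same bound applied to $I+B^*=(I+B)^*$ shows that its kernel is trivial. Hence the range of $I+B$ is dense; being also closed, it equals $X$, so $I+B$ is a bijection with bounded inverse. Since $I$ commutes with $B$, $L=(I+B)^{-1}(I-B)=(I-B)(I+B)^{-1}$, and substituting $v=(I+B)^{-1}x$ into the identity gives $\|x\|^2-\|Lx\|^2=4\langle Bv,v\rangle>0$ whenever $x\neq 0$.

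For the converse I would perform the opposite substitution: invertibility of $I+B$ is now assumed, so for any $v\neq 0$ the vector $x=(I+B)v$ is nonzero, $Lx=(I-B)v$, and the identity together with $\|Lx\|<\|x\|$ immediately gives $\langle Bv,v\rangle>0$.

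The one real subtlety I expect to meet is the surjectivity of $I+B$ in the forward direction: the pointwise positivity $\langle Bv,v\rangle>0$ provides no uniform coercivity bound of the form $\langle Bv,v\rangle\geq c\|v\|^2$, so Lax--Milgram or a naive ``bounded below implies invertible'' shortcut is unavailable. The density-via-adjoint argument is the workaround, and it is clean precisely because real-inner-product symmetry transports the positivity hypothesis from $B$ to $B^*$.
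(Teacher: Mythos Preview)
Your argument is correct. The norm inequality step is essentially the paper's own computation in different clothing: the paper sets $y=Lx$, $u=x+y$, $v=x-y$, observes $v=Bu$, and compares $\|u+v\|$ with $\|u-v\|$---which is exactly your identity $\|(I+B)w\|^2-\|(I-B)w\|^2=4\langle Bw,w\rangle$ applied with $w=\tfrac12 u$.

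Where you genuinely diverge is the invertibility of $I+B$. The paper complexifies to $X\otimes_\R\C$, notes that $\Re\langle Bz,z\rangle\ge 0$ there, and invokes the inclusion of the spectrum in the closure of the numerical range to conclude $-1\notin\sigma(B)$. Your route---$\|(I+B)v\|\ge\|v\|$ gives injectivity and closed range, and the same estimate for $(I+B)^*$ (via $\langle B^*v,v\rangle=\langle Bv,v\rangle$ in the real setting) gives dense range---is more elementary and entirely self-contained; it sidesteps the spectral fact the paper uses without proof. The paper's approach is quicker if one takes that fact for granted, while yours makes explicit why the lack of uniform coercivity is harmless.
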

\begin{proof} We can consider $B$ as a complex linear operator on $X \otimes_\R \C = X^\C$. Since $\langle Bx,x\rangle > 0$, $x \in X$, $x \neq 0$, for every spectral value $\lambda \in \sigma(B)$, we have
$\Re \lambda \ge 0$. Hence $0\notin\sigma(I + B)$, and $I + B$ is invertible.

Put $y = (I + B)^{-1}(I - B)x$, $x \neq 0$. Then $x-y = B(x+y)$. Put $u = x+y$, $v = x-y= Bu$. By the hypothesis, $\langle u,v\rangle  > 0$. Since $\| u \pm v \|^2 = \| u \|^2 \pm 2 \langle u,v \rangle + \| v \|^2$,
we have $\| u + v \| > \| u - v \|$, that is, $\| y \| < \| x \|$. Hence $\| Lx \| < \| x \|$.
The converse is obtained along the same lines.
\end{proof} $\blacksquare$

\begin{prop}
\label{PropAC4}
Let $J$ be a linear almost complex structure tamed by the standard symplectic form $\omega$. Then $J_{st} + J$ is invertible, and  $\| Lx \| < \| x \|$ for $x \neq 0$; here $L = (J_{st} + J)^{-1}(J_{st} - J)$.
\end{prop}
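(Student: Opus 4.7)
The plan is to reduce the statement to Lemma \ref{LemInverI} by identifying the right operator $B$. The obvious candidate is $B = -J_{st}J$: it is a bounded $\R$-linear operator on $\H$, and the factorizations $J_{st}+J = J_{st}(I-J_{st}J) = J_{st}(I+B)$ and $J_{st}-J = J_{st}(I+J_{st}J) = J_{st}(I-B)$ make $J_{st}+J$ invertible iff $I+B$ is invertible, and moreover force
$$
L = (J_{st}+J)^{-1}(J_{st}-J) = (I+B)^{-1} J_{st}^{-1} J_{st} (I-B) = (I+B)^{-1}(I-B),
$$
which is exactly the operator that appears in Lemma \ref{LemInverI}.

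To apply that lemma, I need to check that $\Re\langle Bu,u\rangle > 0$ for every nonzero $u\in\H$. First I compute the symplectic form in terms of the real inner product $\Re\langle\bullet,\bullet\rangle$: writing $\omega = \frac{i}{2}\sum dZ_j\wedge d\bar Z_j$ in real coordinates $Z_j = x_j+iy_j$ gives $\omega(u,v) = \sum (x_j^u y_j^v - x_j^v y_j^u)$, which a direct computation identifies with $\Re\langle J_{st}u,v\rangle$. Hence the taming condition $\omega(u,Ju)>0$ becomes
$$
\Re\langle J_{st}u, Ju\rangle > 0, \qquad u\neq 0.
$$
Since $J_{st}$ is a real isometry with $J_{st}^2=-I$, its adjoint is $J_{st}^*=-J_{st}$. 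Therefore
$$
\Re\langle Bu,u\rangle = -\Re\langle J_{st}J u, u\rangle = -\Re\langle Ju, J_{st}^* u\rangle = \Re\langle J_{st}u, Ju\rangle,
$$
which is positive for $u\neq 0$ by the taming assumption.

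With this inequality in hand, Lemma \ref{LemInverI} applied to $B = -J_{st}J$ yields that $I+B$ is invertible (so $J_{st}+J = J_{st}(I+B)$ is invertible) and that $\|Lx\| < \|x\|$ for every $x\neq 0$, which is the conclusion. The only subtlety I expect is the bookkeeping: making sure to work in the right category (real-linear operators on $\H$ viewed as a real Hilbert space with inner product $\Re\langle\bullet,\bullet\rangle$) and handling the sign relation $J_{st}^* = -J_{st}$ correctly when translating the taming inequality into positivity of $\Re\langle Bu,u\rangle$. Once those signs are settled, the rest is a one-line application of the previous lemma.
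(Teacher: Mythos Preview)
Your proof is correct and follows essentially the same route as the paper: set $B=-J_{st}J$, translate the taming condition into $\Re\langle Bu,u\rangle>0$, and apply Lemma~\ref{LemInverI}. The only cosmetic difference is that you express $\omega(u,v)=\Re\langle J_{st}u,v\rangle$ via real coordinates and use $J_{st}^*=-J_{st}$, whereas the paper writes $\omega(x,y)=\frac{i}{2}(\langle x,y\rangle-\overline{\langle x,y\rangle})$ and reads off $\Re\langle x,-J_{st}Jx\rangle>0$ directly; the two computations are equivalent.
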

\begin{proof} As a bilinear form,
$$
\omega(x,y) = \frac{i}{2} \left(\langle x, y \rangle - \bar{\langle x,y \rangle}\right).
$$
Since $J$ is tamed, for $x \neq 0$,
$$
\omega(x,Jx) = \frac{i}{2} \left(\langle x,Jx \rangle - \bar{\langle x,Jx \rangle} \right) > 0.
$$
Then $\Re\langle x,-J_{st}Jx\rangle > 0$. Put $B = -J_{st}J$. By Lemma \ref{LemInverI}, $I + B$ is invertible. Then
$$L = (J_{st} + J)^{-1}(J_{st} -J) = (I + B)^{-1}(I-B).$$
By Lemma \ref{LemInverI}, $| Lx | < | x |$, $x \neq 0$.
\end{proof} $\blacksquare$
\smallskip

Since $L$ is $\C$-antilinear, it follows that
$ L = \{ 0, A \}$, $\| A \| \le 1$.

\begin{prop}
\label{PropAC7}
Let $J$ be a linear almost complex structure on $\H$ tamed by $\omega$. Then $J$ is compatible with $\omega$ if and only if
$B^* = B$ or equivalently $A^t = A$; here $B = -J_{st}J$ and
$$L = (J_{st} + J)^{-1} (J_{st} - J) = (I + B)^{-1}(I-B) = \{ 0, A \}.$$
Moreover in this case $\| L \| = \| A \| < 1$.
\end{prop}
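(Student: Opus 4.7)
The plan is to reduce each of the three claimed equivalences (compatibility $\Leftrightarrow B^{*}=B$, and $B^{*}=B\Leftrightarrow A^{t}=A$) to a statement about self-adjointness, and then to extract the strict operator-norm bound from the spectral theorem applied to $B$.

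I would first introduce the real bilinear form $g(u,v):=\omega(u,Jv)$. A direct computation from $\omega(u,v)=-\Im\langle u,v\rangle$ and $B=-J_{st}J$ yields the identity $g(u,v)=\Re\langle u,Bv\rangle$. Substituting $u\mapsto Ju$ (and using $J^{2}=-I$) shows that the $\omega$-invariance identity $\omega(Ju,Jv)=\omega(u,v)$ is equivalent to the symmetry of $g$; since the real part $\Re\langle\cdot,\cdot\rangle$ is itself a symmetric real inner product, the symmetry of $g$ is equivalent to $B^{*}=B$ in the sense of the real adjoint of Section~2.2, giving compatibility $\Leftrightarrow B^{*}=B$. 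To pass from $B^{*}=B$ to $A^{t}=A$, I would exploit the Cayley-type correspondence $L=(I+B)^{-1}(I-B)$ with inverse $B=(I+L)^{-1}(I-L)$. When $B^{*}=B$, the factors $I\pm B$ and $(I+B)^{-1}$ pairwise commute (being functions of $B$), so
$$
L^{*}=\bigl((I-B)(I+B)^{-1}\bigr)^{*}=(I+B)^{-1}(I-B)=L,
$$
and the reverse substitution $B=\phi(L)$ gives the converse. Since $L=\{0,A\}$ in the paper's shorthand, the formula $F^{*}=\{P^{*},Q^{t}\}$ of the preceding subsection yields $L^{*}=\{0,A^{t}\}$, so $L^{*}=L\Leftrightarrow A^{t}=A$, completing the first assertion.

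For the norm, $\|L\|=\|A\|$ is immediate from $Lh=A\bar h$ together with the isometry $\|\bar h\|=\|h\|$. To upgrade to the strict inequality $\|L\|<1$ in the compatible case, I would note that $B$ is now self-adjoint, strictly positive on non-zero vectors by the taming hypothesis, and invertible with explicit bounded inverse $B^{-1}=-JJ_{st}$ (which follows from $J^{2}=J_{st}^{2}=-I$). These three properties force the spectrum of $B$ (viewed as a bounded self-adjoint operator on the underlying real Hilbert space) to be a compact subset of $(0,\infty)$, hence contained in some $[m,M]$ with $0<m\le M$. Since $L=\phi(B)$ with $\phi(t)=(1-t)/(1+t)$ and $L$ is real self-adjoint, the continuous functional calculus gives $\sigma(L)=\phi(\sigma(B))\subset(-1,1)$, and therefore $\|L\|=\sup|\sigma(L)|<1$.

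The main obstacle I anticipate is the last step. Proposition~\ref{PropAC4} only supplies the pointwise contraction $\|Lx\|<\|x\|$, which in infinite dimension need not upgrade to $\|L\|<1$. The compatibility hypothesis is exactly what is needed to make $L$ self-adjoint so that the spectral theorem becomes available, and the invertibility of $B=-J_{st}J$ is exactly what prevents the spectrum of $L$ from accumulating at $\pm 1$.
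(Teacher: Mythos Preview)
Your proof is correct and essentially parallels the paper's argument for the norm estimate, but takes a cleaner route for the equivalence. The paper establishes $B^{*}=B$ by writing $J=\{P,Q\}$, invoking Proposition~\ref{PropAC3} to compute $J^{-1}=\{P^{*},-Q^{t}\}$, and then using $J^{-1}=-J$ to extract $P^{*}=-P$ and $Q^{t}=Q$, from which the self-adjointness of $B=\{-iP,-iQ\}$ is read off. Your approach via the bilinear form $g(u,v)=\omega(u,Jv)=\Re\langle u,Bv\rangle$ is more direct and has the advantage of yielding both directions of the equivalence simultaneously, whereas the paper proves only the forward direction and dismisses the rest as obvious. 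The Cayley-transform passage between $B^{*}=B$ and $L^{*}=L$ is implicit in the paper but spelled out in yours; note that the invertibility of $I+L$ that you need for the converse follows from $(I+B)(I+L)=2I$, so there is no gap there. For the strict bound $\|L\|<1$, the two arguments coincide: self-adjointness of $B$, positivity from taming, and invertibility of $B=-J_{st}J$ confine $\sigma(B)$ to a compact subset of $(0,\infty)$, after which the spectral mapping theorem for $\phi(t)=(1-t)/(1+t)$ finishes the job.
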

\begin{proof}
Suppose $J$ is compatible with $\omega$, that is, $J$ is a linear symplectomorphism. Let $J =\{ P, Q \}$. By Proposition \ref{PropAC3}, $J^{-1} = \{ P^*,-Q^t \}$. Since $J^{-1} = -J$, we have $P^* = -P$ and $Q^t = Q$. The latter imply that
$B = \{-iP, -iQ \}$ is self-adjoint. Also $L^*=L$, hence $A^t = A$. Since $B$ is invertible, $0 \notin\sigma(B)$. Since $J$ is tamed, $B \ge 0$. Hence $\sigma(B)$ is contained in $[\lambda_0,\| B \| ]$, here $\lambda_0 > 0$ is the minimal spectral value of $B$.

By spectral mapping theorem,
$$\sigma(L) =\{ (1-\lambda)(1+\lambda)^{-1}: \lambda \in \sigma(B) \}.$$
Since $L$ is self-adjoint,
$$\| L \|  =\max \{ |(1-\lambda)(1+\lambda)^{-1}| : \lambda \in \sigma(B) \}.$$
Since the function $\lambda \mapsto (1-\lambda)(1+\lambda)^{-1} > -1$ is decreasing,
$$\| L \| = \max \{ (1-\lambda_0)(1+\lambda_0)^{-1},  (1-\| B \|)(1+\| B \|)^{-1} \} < 1.$$
The rest of the conclusions are obvious.
\end{proof} $\blacksquare$

In the finite-dimensional case Proposition \ref{PropAC7} also holds for tamed almost complex structures. The following example shows that in general this is not true in the Hilbert case.
\smallskip

{\bf Example.} We construct a tamed linear almost complex structure $J$ on $\H$ for which $\| A \| = 1$.
Let $J = \{ P,Q \}$. Put $P =iI$. Then $J^2 = -I$ reduces to $Q \bar{Q} = 0$. Put $B = -J_{st}J = \{ I, -iQ\}$. Then $I + B = \{ 2I, -iQ \}$, $I - B = \{ 0,iQ \}$ and $(I + B)^{-1} = \{\frac{1}{2}I,\frac{1}{4}Q \}$.
Thus  $L = (I + B)^{-1}(I-B) = \{ 0,\frac{i}{2}Q \}$.
Hence $A=\frac{i}{2}Q$ and $\| A \| = \frac{1}{2} \| Q \|$. The structure $J$ is tamed if and only if  $\Re \langle Bz,z \rangle > 0$ for $z \neq 0$. This condition reduces to
\begin{eqnarray}
\label{*}
\Re \langle Q \bar{z}, z \rangle < | z |^2,\,\,\, z \neq 0.
\end{eqnarray}
We now construct $Q$ satisfying (\ref{*})  with $\| Q \| = 2$, hence $\| A \| = 1$. We represent $\H = \H_1 \oplus \H_2$, the sum of two copies of the Hilbert space.
Define
$$
Q = \left(
\begin{array}{cll}
0 & & 2Q_0\\
0 & & 0
\end{array}
\right),
$$
here $Q_0$ is the diagonal operator
$Q_0=\text{Diag}(c_1, c_2, \ldots)$, $(c_n)$ is a real sequence, $0 < c_n < 1$, $c_n \to 1$ as $n \to \infty$. Clearly $\| Q \| = 2$, $Q \bar{Q} = 0$. For
$z = z_1 + z_2$, $z_j \in \H_j$, we have $\Re \langle Q\bar{z},z \rangle = 2\Re \langle Q_0 z_1,\bar{z}_2 \rangle$. If $z \neq 0$, then clearly (\ref{*}) is fulfilled because $0 < c_n < 1$. $\blacksquare$

\section{Some properties of the Cauchy integral}

The main analytic tool in the theory of pseudoholomorphic curves is the Cauchy integral.
In this section we recall some important regularity properties of the Cauchy (Cauchy-Green) integral and related integral operators and generalize them to Hilbert space-valued functions. They are crucial for our method because we employ them in order to solve boundary value problems for Beltrami type equation.

Everywhere $\zeta$, $z$ and $t$  denote scalar complex variables. Denote by $\D = \{ \zeta \in \C: | \zeta | < 1 \}$ the unit disc in $\C$.

\subsection{Modified Cauchy integrals}

Let  $f:\D \to \C$ be a measurable function. The {\it Cauchy (Cauchy-Green) operator} is defined by
\begin{eqnarray}
\label{Cauchy1}
Tf(z) = \frac{1}{2\pi i}\int_\D \frac{f(t) dt \wedge d\bar{t}}{t-z}.
\end{eqnarray}
The {\it Beurling integral operator} is the formal derivative of $T$, i.e.,
\begin{eqnarray}
\label{Cauchy2}
Sf(z) = p.v. \frac{1}{2\pi i}\int_\D  \frac{f(t) dt \wedge d\bar{t}}{(t-z)^2}
\end{eqnarray}
It is classical  that $T: L^p(\D) \to W^{1,p}(\D)$ is bounded for $p > 1$ and $(\partial/\partial\bar{\zeta}) Tf = f$ as Sobolev's derivative, i.e., $T$ solves the $\bar\partial$-problem in $\D$. Furthermore, $Tf$ is holomorphic on $\C \setminus \bar{\D}$. There are additional properties.

\begin{prop}
\label{operators}
Set $g = Tf$ and $h = S f$.  Then the following holds.
\begin{itemize}
\item[(i)] If  $f \in L^p(\D)$ , $p > 2$ then $g \in C^\alpha(\bar \C) \cap L^\infty(\C)$ with $\alpha = (p-2)/p$. More precisely,
there exist constants $C_1 = C_1(p)$ and $C_2 = C_2(p)$ such that
\begin{eqnarray*}
& &| g(\zeta) | \leq C_1 \| f \|_{L^p(\D)},\\
& &| g(\zeta_1) - g(\zeta_2) | \leq C_2 \| f \|_{L^p(\D)} | \zeta_1 - \zeta_2 |^\alpha
\end{eqnarray*}
for every $\zeta,\zeta_1, \zeta_2 \in \C$.
\item[(ii)]  Let $f \in C^{m,\alpha}(\D)$, for an integer $m \geq 0$ and $0 < \alpha < 0$. Then $g \in C^{m+1,\alpha}(\D)$ and
$T:C^{m,\alpha}(\D) \to C^{m+1,\alpha}(\D)$ is a bounded linear operator. Furthermore $\partial_{\zeta} g = h$. The linear operator $S: C^{m,\alpha}(\D) \to C^{m,\alpha}(\D)$ is bounded.
\item[(iii)] The operator $S$ can be uniquely extended to a bounded linear operator $S: L^p(\D) \to L^p(\D)$
for any $p > 1$. If $f \in L^p(\D)$, $p > 1$ then $(\partial/\partial \zeta) g = h$ as a Sobolev derivative.
\end{itemize}
\end{prop}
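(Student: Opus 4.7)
\noindent\emph{Proof plan.} These are the classical properties of the Cauchy--Green and Beurling transforms in the scalar case, essentially due to Vekua and Calder\'on--Zygmund; my plan is to reproduce them along standard lines so that the same arguments can later be transplanted to Hilbert-valued $f$. For (i), I begin with the pointwise bound
\[
|Tf(z)| \le \frac{1}{\pi}\int_\D \frac{|f(t)|}{|t-z|}\,dA(t).
\]
H\"older's inequality with exponents $p$ and $q=p/(p-1)<2$ reduces the $L^\infty$ estimate to the uniform boundedness of $\int_\D |t-z|^{-q}\,dA(t)$, which is elementary in polar coordinates centered at $z$ and yields $C_1$. For the H\"older increment, I write $Tf(z_1)-Tf(z_2)$ as the integral of $f$ against the kernel $(z_1-z_2)/((t-z_1)(t-z_2))$, split the $t$-domain into a disc of radius comparable to $|z_1-z_2|$ about the midpoint of $z_1,z_2$ and its complement, and apply H\"older on each piece; the book-keeping produces the exponent $\alpha=(p-2)/p$.

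I would treat (iii) before (ii), because its $L^p$-theory feeds into the $C^{m,\alpha}$ arguments. The $L^2$-boundedness of $S$ is immediate from Plancherel: on $\R^2$ the Fourier multiplier of $\mathrm{p.v.}\,1/z^2$ is $\xi/\bar\xi$, of modulus one, so $S$ acts as an $L^2$-isometry (on $L^2(\D)$ after extension by zero and restriction). Extension to $L^p(\D)$ for $1<p<\infty$ then follows from the Calder\'on--Zygmund theorem once the Hörmander size/smoothness conditions on $1/z^2$ are verified. The Sobolev identity $\partial_\zeta Tf = Sf$ follows by approximating $f\in L^p$ by test functions, invoking the classical pointwise identity for smooth $f$, and passing to the limit with the just-proved $L^p$-continuity of $S$ and of $T$.

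For (ii), the smoothness gain $T:C^{m,\alpha}\to C^{m+1,\alpha}$ reduces, via $\partial_{\bar\zeta}Tf=f$ and $\partial_\zeta Tf=Sf$, to the boundedness of $S$ on $C^{m,\alpha}(\D)$. After differentiating $m$ times under the integral, this reduces to the case $m=0$: I fix $z_1,z_2\in\D$, subtract $f(z_j)$ from $f$ in a cutoff neighborhood of each $z_j$ of scale $|z_1-z_2|$ to remove the local singularity, estimate the local remainder by combining the H\"older hypothesis with the bound $|t-z_j|^{-2+\alpha}$ in a shell, and control the far-away part via the mean-value theorem applied to the kernel $(t-z_1)^{-2}-(t-z_2)^{-2}$. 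This is the classical Korn--Privalov estimate; together with (i) applied to $Tf$ and to its $m$-th derivatives it yields all boundedness and regularity claims of (ii).

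The main obstacle is (iii): the $L^p$-boundedness of $S$ is the only step that requires genuine Calder\'on--Zygmund singular-integral machinery rather than direct H\"older estimation, and once it is in hand the identity $\partial_\zeta T = S$ in the Sobolev sense and the rest of (ii) follow by approximation and book-keeping. I should also take care that all constants in (i) and the H\"older constants in (ii) depend only on $p$, $m$, $\alpha$ (and not on $f$), as this uniformity is what will be needed in the later fixed-point arguments for the Beltrami equation.
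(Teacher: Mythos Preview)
Your plan is correct and follows the standard classical arguments (Vekua's potential-theoretic estimates for (i) and (ii), Calder\'on--Zygmund theory for (iii)). Note, however, that the paper does not prove this proposition at all: it simply states ``The proofs are contained in \cite{AIM,Ve}'' and moves on, treating the result as a well-known fact from the literature. So there is no ``paper's own proof'' to compare against; what you have written is essentially a sketch of the content of those references. This is fine if your goal is to make the paper more self-contained, but be aware that for the paper's purposes a citation suffices, and the real work in Section~3 lies in the weighted operators $T_Q$, $S_Q$ (Theorem~3.2) and the extension to Hilbert-valued functions (Proposition~3.5), not in this scalar baseline.
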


The proofs are contained in \cite{AIM,Ve}. We introduce modifications of the above integral operators useful for applications to boundary value problems.

Consider distinct complex numbers $z_k$, $k=1,...,n$ , $| z_k | = 1$ and real $0 < \alpha_k < 1$, $k=1,...,n$.  Let
$$
Q(z) = \prod_{k=1}^n (z - z_k)^{\alpha_k}.
$$
Here we make the cuts $\Gamma_k = \{ \lambda z_k: \lambda > 0  \}$ and fix a branch of $Q$ on $\D \cup (\C \setminus \cup_k \Gamma_k)$.
Define
\begin{align*}
T_Qf(z) &= Q(z) \left ( T(f/Q)(z) + z^{-1} \bar{T(f/Q)(\bar{z}^{-1})} \right ) \\
&= Q(z) \left ( \frac{1}{2\pi i}\int_\D \frac{f(t)dt\wedge d\bar{t}}{Q(t)(t-z)} + \frac{1}{2\pi i}\int_\D \frac{\bar{f(t)}dt \wedge d\bar{t}}{\bar{Q(t)}(\bar{t} z -1)} \right ).
\end{align*}
In order to simplify notations, we write
$$\partial = \frac{\partial}{\partial z}, \,\,\,\, \bar{\partial} = \frac{\partial}{\partial \bar z}.$$
Define the operator
\begin{eqnarray*}
S_Q = \partial T_Q f
\end{eqnarray*}
as the weak derivative of $T_Qf$. The following result is contained in \cite{Mo}.

\begin{thm}
\label{TheoCauchy1}
Let $p_1 < p < p_2$, where
$$1 < p_1 = \max_k \frac{2}{2-\alpha_k} < 2 < p_2 = \min_k \frac{2}{1 - \alpha_k}.$$
Then
$S_Q: L^p(\D) \to L^p(\D)$ and
$T_Q: L^p(\D) \to W^{1,p}(\D)$
are bounded linear operators.
\end{thm}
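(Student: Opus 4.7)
The plan is to recast the boundedness of $T_Q$ and $S_Q$ on $L^p(\D)$ as weighted norm inequalities for the standard Cauchy--Green and Beurling operators $T,S$ recalled in Proposition \ref{operators}, with the weights prescribed by the factor $Q$. The conditions $p_1 < p < p_2$ are calibrated precisely so that the relevant power weights fall inside the Muckenhoupt $A_p$ class.

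First I would decompose $T_Qf = Q\cdot(T(f/Q)+R)$ where $R(z) = z^{-1}\overline{T(f/Q)(\bar z^{-1})}$ is the reflection term. One checks that $R$ is holomorphic and bounded on $\D$: since $T(f/Q)$ is holomorphic on $\C\setminus\bar\D$ with decay $O(1/|w|)$ at infinity, the apparent pole of $R$ at $z=0$ is removable. Differentiation then gives $\bar\partial T_Qf = Q\cdot(f/Q) = f$ in the Sobolev sense and
$$\partial T_Qf \,=\, Q\cdot S(f/Q) \,+\, Q'\cdot\bigl[T(f/Q)+R\bigr] \,+\, Q\cdot R',$$
reducing the task to $L^p$-estimates for each summand.

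The principal term $Q\cdot S(f/Q)$ is handled via the weighted boundedness of the Beurling transform: the identity $\|Q\cdot S(f/Q)\|_{L^p(\D)} = \|S(f/Q)\|_{L^p(|Q|^p\,dm)}$ reduces the question to $S:L^p(w)\to L^p(w)$ with $w = |Q|^p = \prod_k|z-z_k|^{p\alpha_k}$. By the standard Muckenhoupt criterion in $\R^2$, this product of power weights lies in $A_p$ iff $-2 < p\alpha_k < 2(p-1)$ for each $k$; the nontrivial upper bound rearranges to exactly $p > p_1$. Since $S$ is a Calder\'on--Zygmund operator, the standard $A_p$-weighted boundedness theorem delivers the estimate.

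The main obstacle is the lower-order term $Q'\cdot T(f/Q)$, where $Q'$ has stronger boundary singularities $|z-z_k|^{\alpha_k-1}$ than $Q$ itself. The saving grace is a precise cancellation: the Cauchy--Green transform of a function with boundary pole $|t-z_k|^{-\alpha_k}$ itself behaves like $|z-z_k|^{1-\alpha_k}$ near $z_k$, exactly matching the growth of $Q'$. Rigorously, one writes $Q'/Q = \sum_k \alpha_k/(z-z_k)$ and transforms the product $Q'\cdot T(f/Q) = Q\cdot\sum_k \alpha_k(z-z_k)^{-1}T(f/Q)$ into a sum of modified Cauchy integrals amenable to the same weighted $A_p$-framework; here the condition $p < p_2$ emerges as the threshold ensuring that the additional factors $(z-z_k)^{-1}$ remain controllable. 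The $L^p$-bound on $T_Qf$ itself is obtained by the same method with $T$ in place of $S$ (in fact easier, as $T$ is smoothing). Assembling everything gives $T_Q: L^p(\D)\to W^{1,p}(\D)$ and $S_Q: L^p(\D)\to L^p(\D)$; the detailed bookkeeping is worked out in \cite{Mo}.
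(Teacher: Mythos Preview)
Your route is genuinely different from the paper's. The paper (Appendix~II, following \cite{Mo}) never invokes Muckenhoupt weights. Instead it first reduces $S_Q$ to the unreflected operator $S^Q=\partial\bigl(Q\,T(f/Q)\bigr)$ via an explicit inversion identity (Lemma~\ref{LemCauchy1}), then writes $S^Q=\sum_k\alpha_k\,\tilde S^{Q_k}$ with $Q_k=Q/(z-z_k)$ and proves each $\tilde S^{Q_k}=Q_k\,S(\cdot/Q_k)$ is bounded by bare hands: a H\"older splitting together with Vekua's two--singularity estimate $\int_\D|t-z_0|^{-\alpha}|t-z|^{-\beta}\,|d^2t|\le C|z-z_0|^{2-\alpha-\beta}$. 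Your $A_p$ argument is more conceptual and recovers the same thresholds cleanly: the weight $|Q|^p$ lies in $A_p(\R^2)$ exactly when $p>p_1$, and after absorbing the extra factor $(z-z_k)^{-1}$ coming from $Q'$ the weight $|Q_k|^p$ lies in $A_p$ exactly when $p<p_2$. In fact your ``modified Cauchy integrals'' are precisely the paper's $\tilde S^{Q_k}$, via the identity $Q(z)(z-z_k)^{-1}T(f/Q)=\tilde S^{Q_k}f-\tilde S^{Q}f$, so the two arguments meet at the same decomposition but estimate the pieces by different machinery.

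One point deserves more care. You assert that $R(z)=z^{-1}\overline{T(f/Q)(\bar z^{-1})}$ is \emph{bounded} on $\D$, but the justification you give only removes the singularity at $0$; near $b\D$ there is no reason for $T(f/Q)$, evaluated just outside the disc, to stay bounded (for $f\in L^p$ one only has $f/Q\in L^1$, so $T(f/Q)(w)$ can blow up like $(|w|-1)^{-1}$). The terms $Q'\!\cdot\!R$ and $Q\!\cdot\!R'$ therefore cannot be dismissed as harmless holomorphic remainders and must be estimated. The paper sidesteps this by the closed-form relation $S_Qf(z)=S^Qf(z)-z^{-2}\rho(z)\,\overline{S^Qf(\bar z^{-1})}$ (derived under $\sum_k\alpha_k=1$), which shows the reflected piece on $\D\setminus\tfrac12\D$ is just $S^Q$ composed with inversion, hence bounded once $S^Q$ is. Your sketch would be complete once you either reproduce this identity or run the $A_p$ estimate for the reflected kernel directly. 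Finally, note that \cite{Mo} carries out the H\"older/Vekua computation, not the $A_p$ argument, so the citation does not cover the ``detailed bookkeeping'' of your approach.
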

We present the proof in Appendix II slightly improving the original argument of \cite{Mo}. We closely follow \cite{Mo} and do not claim originality.

As examples we consider two operators corresponding to two special weights $Q$. Consider the arcs $\gamma_1 = \{ e^{i\theta} : 0 < \theta < \pi/2 \}$, $\gamma_2 = \{ e^{i\theta} : \pi/2 < \theta < \pi \}$, $\gamma_3 = \{ e^{i\theta} : \pi < \theta < 2\pi\}$ on the unit circle in $\C$. Introduce the functions
$$
R(\zeta) = e^{3\pi i/4}(\zeta - 1)^{1/4} (\zeta + 1)^{1/4}(\zeta - i)^{1/2}\quad
\text{and}\quad
X(\zeta)= R(\zeta)/\sqrt{\zeta}.
$$
Here we choose the branch of $R$ continuous in $\bar\D$
satisfying $R(0) = e^{3\pi i/4}$.
For definiteness, we also choose the branch of $\sqrt{\zeta}$ continuous in $\C$ with deleted positive real line, $\sqrt{-1}=i$. Then $\arg X$   on  arcs $\gamma_j$, $j=1,2,3$ is equal to  $3\pi/4$, $\pi/4$ and $0$ respectively.  Therefore, the function $X$ satisfies the boundary conditions
\begin{equation}
\label{BC}
\begin{cases}
\; \Im (1+i)X(\zeta) = 0, & \zeta \in \gamma_1,\\
\; \Im (1-i)X(\zeta) = 0, & \zeta \in \gamma_2,\\
\; \Im X(\zeta) = 0, &      \zeta \in \gamma_3,
\end{cases}
\end{equation}
which represent the lines through 0 parallel to the sides of
the triangle $\Delta$ with vertices at $\pm1$, $i$.
Consider the operators
\begin{eqnarray}
\label{Cauchy3bis}
T_1 = T_Q + 2i\,\Im Tf(1) \,\,\, \mbox{ with} \,\,\,Q = \zeta - 1
\end{eqnarray} and
\begin{eqnarray}
\label{Cauchy4bis}
T_2 = T_Q \,\,\, \mbox{ with} \,\,\,Q = R.
\end{eqnarray}
Note that
$$
T_1 f(\zeta) = Tf(\zeta) - \bar{Tf(1/\bar{\zeta})}.
$$
The formal derivatives of these operators  are denoted by
\begin{eqnarray}
\label{Cauchy5}
S_jf(\zeta) = \frac{\partial}{\partial\zeta} T_jf(\zeta)
\end{eqnarray} as integrals in the sense of the Cauchy principal value.
As a consequence of
the above results, we have

\begin{prop}
\label{OpBounValScal}
The operators $T_j,S_j$ enjoy the following properties:
\begin{itemize}
\item[(i)] Each $S_j :L^p(\D) \to L^p(\D)$, $j=1,2$, is a bounded linear operator for $p_1 < p < p_2$. Here for $S_1$ one has $p_1 = 1$ and $p_2 = \infty$, and for $S_2$ one has $p_1 = 4/3$ and $p_2 = 8/3$. For $p_1<p<p_2$, one has $S_jf(\zeta) = (\partial/\partial\zeta) T_jf(\zeta)$ as Sobolev's derivatives.
\item[(ii)]  Each $T_j :L^p(\D) \to W^{1,p}(\D)$, $j=1,2$,  is a bounded linear operator for $p_1\le p<p_2$. For $f \in L^p(\D)$, $p_1<p<p_2$, one has $(\partial/\partial\bar{\zeta}) T_j f = f$ on $\D$ as Sobolev's derivative.
\item[(iii)]  For every $f \in L^p(\D)$, $2<p<p_2$, the function $T_1f$ satisfies $\Re T_1f|_{b\D} = 0$
    whereas $T_2f$ satisfies
    the same boundary conditions (\ref{BC}) as $X$.
\item[(iv)] Each $S_j: L^2(\D) \to L^2(\D)$, $j=1,2$, is an isometry.
\item[(v)]  The function $p \mapsto \| S_j \|_{L^p}$ approaches $\| S_j \|_{L^2} = 1$ as $p\searrow 2$.
\end{itemize}
\end{prop}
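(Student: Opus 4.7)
My plan is to establish (i)–(v) in turn, using Theorem \ref{TheoCauchy1} together with explicit boundary computations and Green's identity. Parts (i)–(ii) reduce to Theorem \ref{TheoCauchy1}: for $T_2, S_2$ with weight $Q = R$, the singular points are $\{1,-1,i\}$ with exponents $\{1/4, 1/4, 1/2\}$, giving
\begin{equation*}
p_1 = \max_k \frac{2}{2-\alpha_k} = \frac{4}{3}, \qquad p_2 = \min_k \frac{2}{1-\alpha_k} = \frac{8}{3}.
\end{equation*}
For $T_1, S_1$ I would start from the identity $T_1 f(\zeta) = Tf(\zeta) - \overline{Tf(1/\bar\zeta)}$: the classical bound $T:L^p(\D)\to W^{1,p}(\D)$ holds for all $p>1$, and the reflected term $\overline{Tf(1/\bar\zeta)}$ is holomorphic on $\D$ (since $Tf$ is holomorphic on $\C\setminus\bar\D$ and decays at infinity), with analogous bounds obtained via the change of variable $\zeta\mapsto 1/\bar\zeta$; this yields $p_1=1,\ p_2=\infty$ for $T_1, S_1$. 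The identities $\bar\partial T_j f = f$ and $\partial T_j f = S_j f$ in the weak sense follow because the correction terms are $\bar\partial$-annihilated on $\D$.

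For (iii), the key point is $1/\bar\zeta = \zeta$ on $b\D$. Thus $T_1 f(\zeta) = Tf(\zeta) - \overline{Tf(\zeta)} = 2i\,\Im Tf(\zeta)$, whence $\Re T_1 f|_{b\D} = 0$. For $T_2$, I would write $R = \sqrt\zeta\,X$, set $g = T(f/R)$, and compute
\begin{equation*}
T_2 f(\zeta) = X(\zeta)\bigl(\sqrt\zeta\,g(\zeta) + \sqrt\zeta\,\bar\zeta\,\overline{g(\zeta)}\bigr) = 2 X(\zeta)\,\Re\bigl(\sqrt\zeta\,g(\zeta)\bigr)
\end{equation*}
on $b\D$, using $\sqrt\zeta\,\bar\zeta=\overline{\sqrt\zeta}$ when $|\zeta|=1$. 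Thus $T_2 f / X$ is real on $b\D$, so $T_2 f$ takes values on the same line through the origin as $X$ on each arc $\gamma_j$, i.e., it satisfies (\ref{BC}).

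For (iv), I would apply Green's identity
\begin{equation*}
\int_\D\bigl(|g_\zeta|^2 - |g_{\bar\zeta}|^2\bigr)\,dA = \frac{1}{2i}\int_{b\D}\bar g\,dg
\end{equation*}
with $g=T_j f$. The boundary conditions from (iii) make $\bar g\,dg$ exact along $b\D$: for $T_1$, $g = iv$ with $v$ real gives $\bar g\,dg = v\,dv$; for $T_2$, $g = \alpha_j\,r$ on $\gamma_j$ with $\alpha_j\in\C$ fixed and $r$ real gives $\bar g\,dg = |\alpha_j|^2\,r\,dr$, and the corner contributions vanish because $X(\pm 1)=X(i)=0$. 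Hence the boundary integral is zero and $\|S_j f\|_{L^2}=\|\bar\partial T_j f\|_{L^2} = \|f\|_{L^2}$. Finally, (v) follows from (i) and (iv) by Riesz–Thorin interpolation: $p\mapsto\log\|S_j\|_{L^p}$ is convex in $1/p$ on the interval of boundedness and equals $0$ at $p=2$, hence is upper semi-continuous at $p=2$, giving $\|S_j\|_{L^p}\to 1$ as $p\searrow 2$.

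The main obstacle I expect is the boundary integration in (iv) at the corner points $\pm 1, i$ for $T_2$: one must verify carefully that the real parameters describing $g|_{\gamma_j}$ are integrable enough at the corners for the one-form $\bar g\,dg$ to integrate to zero, which for rough $f\in L^2(\D)$ typically requires approximation by smooth data where continuity up to the corners is transparent.
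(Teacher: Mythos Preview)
The paper presents this proposition without proof, stating it merely ``as a consequence of the above results'' (Theorem \ref{TheoCauchy1} and Proposition \ref{operators}), with the detailed arguments deferred to the references \cite{Mo,CoSuTu,SuTu1}. Your proposal correctly supplies those details along the expected lines: the application of Theorem \ref{TheoCauchy1} to $Q=R$ gives the exponent range for $j=2$; the explicit reflection formula $T_1f(\zeta)=Tf(\zeta)-\overline{Tf(1/\bar\zeta)}$ (with the reflected term holomorphic on $\D$) handles $j=1$ for all $1<p<\infty$ via the $L^p$-boundedness of the Beurling transform on $\C$ and a change of variable $w=1/\bar\zeta$; the boundary computations in (iii) are correct (in particular $T_2f=2X\,\Re(\sqrt\zeta\,T(f/R))$ on $b\D$); the Green--Stokes identity gives (iv); and Riesz--Thorin yields (v).

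Two minor sharpenings. In (v), convexity of $1/p\mapsto\log\|S_j\|_{L^p}$ on the open interval of boundedness gives full continuity, not just upper semi-continuity, so the limit $\|S_j\|_{L^p}\to\|S_j\|_{L^2}=1$ follows directly. In (iv), your diagnosis of the corner issue for $T_2$ is correct, and your remedy is the right one: for $f\in C_0^\infty(\D)$ one has $f/R\in L^q(\D)$ for every $q<4$, hence $T(f/R)\in C^\beta(\bar\D)$, so $T_2f=2X\,\Re(\sqrt\zeta\,T(f/R))$ is continuous on $b\D$ and vanishes at $\pm1,i$ because $X$ does; thus $\int_{\gamma_j}\bar g\,dg=[r^2/2]=0$ on each arc, and the isometry extends to all of $L^2$ by density and the $L^2$-boundedness of $S_j$ already obtained in (i).
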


Our next goal is to extend the previous results on the Cauchy integral in Sobolev classes to Hilbert space-valued functions.

\subsection{Bochner's integral}

Following \cite{Y} we recall basic properties of Bochner's integral.
Let $(S,\mu)$ be a measure space, $X$ be a  Banach space $X$ and  $X'$ be the dual of $X$. A map $u:S \to X$ is called {\it weakly measurable} if, for any $f \in X'$, the function $S \ni s\mapsto f(u(s))$ is measurable. A map $u$ is called {\it simple} or {\it finitely-valued} if it is constant $\neq 0$ on each of a finite number disjoint measurable sets $B_j$ with $\mu(B_j) < \infty$ and $u = 0$ on $S \setminus \cup_j B_j$. A map $u$ is called {\it strongly measurable} if there exists a sequence of simple functions strongly convergent to $u$ a.e. on $S$.  Suppose that $X$ is separable. Then $u$ is strongly measurable if and only if it is weakly measurable. This fact is a special case of Pettis's theorem. We will deal with the case where $X$ is a separable Hilbert space, so  these two notions of measurability will coincide.

Consider a simple function  $u:S \to X$; let $u = x_j$ on $B_j$, $j=1,...,n$, where $B_j$'s are disjoint  and $\mu(B_j) < \infty$ and $u = 0$ on $S \setminus \cup_j B_j$.
Then we put
$$
\int_S u(s) d\mu(s) = \sum_{j=1}^n x_j\mu(B_j).
$$
A function $u:S \to X$ is called {\it Bochner integrable} if there exists a sequence $(u_k)$ of simple functions strongly convergent to $u$ a.e. on $S$ such that
$$
\lim_{k \rightarrow \infty} \int_S \| u(s) - u_k(s) \| d\mu(s) = 0.
$$
Then {\it the Bochner integral of $u$} is defined by
$$
\int_S u(s) d\mu(s) = \lim_{k\rightarrow \infty} \int_S u_k(s)d\mu(s),
$$
where the limit in the right hand denotes the strong convergence. One can show that this definition is consistent i.e. is independent of the choice of the sequence $(u_k)$. The fundamental theorem of Bochner states that a strongly measurable function $u$ is Bochner integrable if and only if the function $s \mapsto \| u(s) \|$ is integrable. Furthermore, Bochner's integral enjoys the following properties:
\begin{itemize}
\item[(i)] One has
\begin{eqnarray}
\label{Bochner1}
\left\| \int_S u(s) d\mu(s)\right\| \le \int_S \| u(s) \| d\mu(s).
\end{eqnarray}
\item[(ii)] Let $L:X \to Y$ be a bounded linear operator between two Banach spaces. Assume that  $u:S \to X$ is a Bochner integrable function. Then $L u$ is a Bochner integrable function, and
\begin{eqnarray}
\label{Bochner2}
\int_S L u(s) d\mu(s) = L \int_S u(s) d\mu(s).
\end{eqnarray}
\end{itemize}

In our applications we deal with the case where $S = \D$ or another subset of $\C$ and $X =\H$ is a Hilbert space. Denote by   $W^{k,p}(\D,\H)$  the  Sobolev classes of maps $Z:\D \to \H$ admitting the $p$-integrable weak partial derivatives $D^\alpha Z$ up  to the order $k$ (as usual we identify functions coinciding almost everywhere).  We define weak derivatives  in the usual way using the space of scalar-valued test functions. We write simply $L^p$ if $k=0$.
The norm on $L^{p}(\D,\H)$ is defined by
$$
\| Z \|_{L^p(\D,\H)} = \left ( \int_{\D} \| Z(\zeta) \|_\H^p (i/2)d\zeta \wedge d\bar{\zeta}\right )^{1/p}.
$$
The space $W^{k,p}(\D,\H)$ equipped with the norm
$$\| Z \| = \left ( \sum_{| \alpha | \leq k} \| D^\alpha Z \|^p_{L^p(\D,\H)}\right )^{1/p}
$$
is a Banach space.

We define Lipshitz spaces $C^{k,\alpha}(\D,\H)$, $0 < \alpha \le 1$, $k$ is a positive integer,  in the usual way.
If $\H = \C$, we as usual  write $L^p(\D)$, $W^{1,p}(\D)$ and $C^{k,\alpha}(\D)$ respectively.
We note that the system (\ref{holomorphy}) still makes sense for
$Z \in W^{1,p}(\D)$ for $p \ge 2$.

\subsection{Linear operators in vector-valued Sobolev spaces and their extension}

For definiteness  we only consider the functions $\D \to \H$, where as usual $\H$ is a separable Hilbert space.

Let $P: L^p(\D) \to L^p(\D)$ be a bounded linear operator.We say that {\it $P$ extends to $L^p(\D,\H)$} if there is a unique
bounded linear operator $P_\H: L^p(\D,\H) \to L^p(\D,\H)$ such that for every $u \in L^p(\D)$ and $h \in \H$ we have
$P_\H(uh) = P(u)h$. We will usually omit the index $\H$ in $P_\H$. The next proposition concerns the properties of the integral operators $T$, $T_1$, $T_2$ introduced in Subsection 3.1.

\begin{prop}
\label{VectIntPropI}
\begin{itemize}
\item[(i)] Every bounded linear operator $P:L^p(\D) \to L^p(\D)$ extends to $L^p(\D,\H)$, $1 \le p < \infty$.
\item[(ii)] For $p > 2$ the operators $T$, $T_1$ are bounded linear operators $L^p(\D,\H) \to C^\alpha(\D,\H)$ with
$\alpha = (p-2)/p$.
\item[(iii)] For $u \in L^p(\D,\H)$ for appropriate $p$ as in Proposition \ref{OpBounValScal}, we have
\begin{eqnarray*}
\frac{\partial Tu}{\partial \bar \zeta} = u, \,\,\frac{\partial T_ju}{\partial \bar \zeta} = u, \,\,
\frac{\partial Tu}{\partial  \zeta} = Su, \,\,\frac{\partial T_ju}{\partial  \zeta} = S_ju , \,\, j=1,2
\end{eqnarray*}
as weak derivatives.
\item[(iv)] The operators $T$, $T_1$, $T_2$ are bounded linear operators $L^p(\D,\H) \to W^{1,p}(\D,\H)$ for the same $p$ as in Proposition  \ref{OpBounValScal}.
\end{itemize}
\end{prop}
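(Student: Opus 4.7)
My plan is to prove the four parts in the order given, with (i) carrying the main analytic weight and (ii)--(iv) then falling out from the scalar results of Propositions \ref{operators} and \ref{OpBounValScal} together with density and the Bochner bound (\ref{Bochner1}). For (i), fix an orthonormal basis $\{e_k\}$ of $\H$ and expand $u\in L^p(\D,\H)$ as $u(\zeta)=\sum_k u_k(\zeta)e_k$ with $u_k=\langle u(\cdot),e_k\rangle_\H\in L^p(\D)$; Parseval gives $\|u(\zeta)\|_\H=(\sum_k|u_k(\zeta)|^2)^{1/2}$, so $\|u\|_{L^p(\D,\H)}^p=\int_\D(\sum_k|u_k|^2)^{p/2}$. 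Set $P_\H u:=\sum_k P(u_k)e_k$ and prove boundedness via the classical Marcinkiewicz--Zygmund square-function inequality
\begin{equation*}
\Bigl\|\Bigl(\sum_k|Pu_k|^2\Bigr)^{1/2}\Bigr\|_{L^p(\D)}\le C_p\|P\|\,\Bigl\|\Bigl(\sum_k|u_k|^2\Bigr)^{1/2}\Bigr\|_{L^p(\D)},
\end{equation*}
which I would obtain by applying $P$ (in the $\zeta$ variable) to $\sum_k r_k(s)u_k$ for each fixed $s$, where $r_k$ are independent Rademacher functions, and then averaging in $s$ via Khintchine's inequality. Uniqueness of the extension is automatic, since $P_\H$ is already forced on the dense subspace of finite sums $\sum u_k e_k$.

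For (ii), I would define $Tu$ and $T_1u$ directly by Bochner integrals with the same scalar kernels; the bound (\ref{Bochner1}) applied to $Tu(\zeta)$ and to the difference $Tu(\zeta_1)-Tu(\zeta_2)$ shows that $\|Tu(\zeta)\|_\H$ and $\|Tu(\zeta_1)-Tu(\zeta_2)\|_\H$ are controlled pointwise by the corresponding scalar expressions applied to $\|u\|_\H\in L^p(\D)$. The constants $C_1,C_2$ of Proposition \ref{operators}(i) then yield $\|Tu\|_{L^\infty(\D,\H)}+[Tu]_{C^\alpha(\D,\H)}\le C\,\|u\|_{L^p(\D,\H)}$ with $\alpha=(p-2)/p$. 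The operator $T_1$ is treated identically, since $T_1u(\zeta)=Tu(\zeta)-\bar{Tu(1/\bar\zeta)}$. For (iii), I would work on the dense subspace of finite linear combinations of simple tensors $u(\zeta)h$ with $u\in L^p(\D)$ and $h\in\H$, on which the four identities $\partial_{\bar\zeta}Tu=u$, $\partial_{\bar\zeta}T_ju=u$, $\partial_\zeta Tu=Su$, $\partial_\zeta T_ju=S_ju$ reduce at once to their scalar counterparts from Propositions \ref{operators} and \ref{OpBounValScal}. Continuity of all operators involved (granted by (i) for $S, S_j$ and by (ii) for $T, T_j$) allows me to pass to the limit in the distributional pairing with scalar test functions, establishing the identities on all of $L^p(\D,\H)$.

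Part (iv) is then immediate: the $L^p\to L^p$ extensions of $S, S_1, S_2$ given by (i), combined with the identification of weak derivatives in (iii), upgrade each of $T, T_1, T_2$ to a bounded map $L^p(\D,\H)\to W^{1,p}(\D,\H)$ for the $p$-ranges stated in Proposition \ref{OpBounValScal}. The hard part of this plan is (i): without the Marcinkiewicz--Zygmund square-function estimate, there is no reason a general scalar $L^p$ operator should extend to Hilbert-valued $L^p$. Once that inequality is granted, the remaining parts are essentially bookkeeping via Bochner's integral together with density.
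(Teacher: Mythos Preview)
Your proposal is correct. The paper's own proof of (i) is not self-contained: it simply invokes that $\H$ is a UMD space (for singular integrals, citing Burkholder) and, more to the point, that $\H$ is a so-called $p$-space in the sense of Herz, which by definition is exactly the extension property asserted in (i). Your Marcinkiewicz--Zygmund/Khintchine argument is precisely the classical proof of that fact, so you are unpacking the reference rather than taking a genuinely different route. For (ii)--(iv) the paper defers entirely to \cite{SuTu4}; your arguments via the Bochner bound (\ref{Bochner1}) and density of finite tensors are the natural ones and match the Remark following the proposition, which records the coordinatewise description $P_\H u=\sum (Pu_n)e_n$ you use.

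Two small points worth tightening. First, in (i) you should note that the series $\sum_k u_k e_k$ actually converges in $L^p(\D,\H)$ (not just pointwise a.e.); this follows by dominated convergence since $(\sum_{k>N}|u_k|^2)^{1/2}\le \|u\|_\H\in L^p$, and is needed to pass from the finite Marcinkiewicz--Zygmund inequality to the full extension. Second, in (ii) you define $Tu$ as a Bochner integral, while (i) gives an abstract extension $T_\H$; a one-line remark that these agree on simple tensors, hence everywhere by uniqueness, would close that loop. Your approach has the virtue of being elementary and self-contained, avoiding any appeal to UMD theory, which is heavier machinery than the problem requires.
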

\begin{proof} (i) If $P$ is a singular integral operator, the result follows because $\H$ is a UMD space \cite{B}. For a general bounded linear operator the result follows because $\H$ is so called $p$-space \cite{He}, which means exactly the same as Proposition \ref{VectIntPropI} (i). Since the operators $T$, $T_1$, $T_2$, $S$, $S_1$, $S_2$ are bounded linear operators in $L^p(\D)$ for appropriate $p > 1$, they extend to $L^p(\D,\H)$. Note that these extended operators preserve the same norms. The parts (ii), (iii), and (iv) are proved in \cite{SuTu4}
\end{proof} $\blacksquare$
\medskip

{\bf Remark.} Let $\{ e_n \}_{n=1}^\infty$ be an orthonormal basis of $\H$. Then every $u \in L^p(\D,\H)$, $p \ge 1$, is represented by the series
\begin{eqnarray}
\label{series}
u = \sum_{n=1}^\infty u_n e_n
\end{eqnarray}
converging in $\H$ a.e. in $\D$. Here $u_n(\zeta) = \langle u(\zeta),e_n \rangle$ is measurable, hence $u_n \in L^p(\D)$, $\| u_n \|_p \le \| u \|_p$. It is easy to see that for every $u \in L^p(\D,\H)$, $p \ge 1$, the series (\ref{series}) converges a.e. in $\D$ if and only if it converges in $L^p(\D,\H)$. Furthermore, if $P:L^p(\D) \to L^p(\D)$, $p \ge 1$, is a bounded linear operator and  $u \in L^p(\D,\H)$ is given by (\ref{series}),  then
$$
P_\H u = \sum_{n=1}^\infty (Pu_n) e_n.
$$

\subsection{Cauchy integral for Lipschitz classes of vector functions}

Above we considered the properties of the Cauchy integral for Sobolev classes of vector functions. The Lipschitz classes also are useful for applications. Here the situation is simpler and the proofs follow the scalar case line-by-line with obvious changes (essentially the module must be replaced by the Hilbert space norm). For this reason we omit proofs.

The {\it Cauchy type integral} of a function $f:b\D \to \H$
\begin{eqnarray}
\label{VectorCauchy1}
Kf(z)=\frac{1}{2\pi i}\int_{b\D} \frac{f(\zeta)}{\zeta - z}d\zeta
\end{eqnarray}
is defined for $z \in \C \setminus b\D$. Similarly to the scalar case, the Cauchy type integral is holomorphic on $\C \setminus b\D$.

\begin{thm}
\label{VectorCauchyTm}
We have:
\begin{itemize}
\item[(i)] Let $f \in C^{m,\alpha}(b\D,\H)$, $0 < \alpha < 1$, $m\ge 0$ is an integer.
 Then  $Kf \in C^{m,\alpha}(\D,\H)$ and $K: C^{m,\alpha}(b\D,\H) \to C^{m,\alpha}(\D,\H)$ is a bounded linear operator.
\item[(ii)] Let $f \in C^{m,\alpha}(\D)$, $0 < \alpha < 1$, $m\ge 0$ be an integer.
Then the Cauchy-Green integral $Tf$ is of class $C^{m+1,\alpha}(\D,\H)$ and $T:C^{m,\alpha}(\D,\H) \to C^{m+1,\alpha}(\D,\H)$ is a bounded linear operator.
\end{itemize}
\end{thm}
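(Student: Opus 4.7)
The strategy is to transcribe the classical scalar proofs of the Plemelj–Privalov theorem (for $K$) and the H\"older regularity of the Cauchy–Green operator (for $T$), replacing absolute values by the Hilbert-space norm $\|\cdot\|_\H$ throughout. Every such scalar argument proceeds by bounding $|\int\cdots|$ with $\int|\cdots|$, so the only analytic input needed is Bochner's inequality (\ref{Bochner1}) and the linearity property (\ref{Bochner2}); these, together with the fundamental theorem of Bochner (strong measurability plus $\|u\|_\H\in L^1$ implies integrability), let every estimate pass through line by line.

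For part (i) with $m=0$, I would fix $z_1,z_2\in\bar\D$ and a point $z_*\in b\D$ close to both, and use the standard decomposition
$$
Kf(z_1)-Kf(z_2)=\frac{z_1-z_2}{2\pi i}\int_{b\D}\frac{f(\zeta)-f(z_*)}{(\zeta-z_1)(\zeta-z_2)}\,d\zeta + f(z_*)\cdot\Phi(z_1,z_2),
$$
where $\Phi$ is the explicit scalar difference of the Cauchy kernels (boundary values $0$, $\frac12$, or $1$). Applying (\ref{Bochner1}) and $\|f(\zeta)-f(z_*)\|_\H\le [f]_\alpha|\zeta-z_*|^\alpha$ reduces the first term to the familiar scalar integral bounded by $C\,|z_1-z_2|^\alpha[f]_\alpha$, while the second is controlled by $\|f\|_{C^0}$ times a scalar quantity. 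For $m\ge 1$, parametrize $b\D$ by $\zeta=e^{i\theta}$ and integrate by parts: since $f$ is $\H$-valued and smooth enough, the Bochner integration-by-parts identity (a direct consequence of (\ref{Bochner2}) applied to $\partial_\theta$) converts $\partial_z^k Kf$ into $K$ applied to the $k$-th tangential derivative of $f$, and the $m=0$ case finishes the induction.

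For part (ii), Proposition \ref{VectIntPropI}(iii) already identifies $\partial_{\bar\zeta}Tf=f$ and $\partial_\zeta Tf=Sf$ as weak derivatives, so it suffices to show that both $f$ and $Sf$ lie in $C^{m,\alpha}(\D,\H)$. The H\"older bound on $Tf$ itself comes from
$$
\|Tf(z_1)-Tf(z_2)\|_\H\le \frac{1}{2\pi}\int_\D \|f(t)\|_\H\left|\frac{1}{t-z_1}-\frac{1}{t-z_2}\right|\,dA(t),
$$
which is the scalar estimate applied to the scalar function $\|f\|_\H$. For $Sf$, I would use the classical splitting of the principal value: for $|z_1-z_2|=\delta$, split $\D=B(z_0,2\delta)\cup(\D\setminus B(z_0,2\delta))$ around a midpoint $z_0$; on the small disc, H\"older continuity of $f$ gives an integrable bound on $\|f(t)-f(z_0)\|_\H/|t-z_i|^2$, while on the complement one uses the smooth dependence of the kernel on $z$. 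Higher $m$ is handled by the standard fact that $\partial^\beta Tf=T(\partial^\beta f)+\text{(lower-order boundary corrections)}$ for appropriate multi-indices $\beta$, together with the same Bochner reductions.

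The only real subtlety — and the one step worth checking carefully rather than citing — is the convergence of the principal-value integral defining $Sf$ in the $\H$-norm when $f\in C^{0,\alpha}(\D,\H)$. This requires subtracting $f(z)$ inside the integrand and verifying strong measurability of $(f(t)-f(z))/(t-z)^2$ on $\D\setminus B(z,\epsilon)$ uniformly in $\epsilon$, which follows from Pettis's theorem and separability of $\H$; the bound $\|f(t)-f(z)\|_\H\le[f]_\alpha|t-z|^\alpha$ then gives an $\epsilon$-independent dominating scalar function, so Bochner's dominated convergence theorem yields the limit. Once this is in place, every remaining estimate is a word-for-word transcription of the scalar argument in Vekua \cite{Ve}.
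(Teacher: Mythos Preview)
Your proposal is correct and follows the same meta-strategy as the paper: both argue that the classical scalar proofs go through verbatim once absolute values are replaced by $\|\cdot\|_\H$ via the Bochner inequality (\ref{Bochner1}). The paper's sketch differs from yours only in which scalar argument is being transcribed. For (i) the paper routes through the Plemelj--Sokhotski formulae on $b\D$ (following Gakhov \cite{Ga}) to get H\"older boundary values, and then invokes the Hardy--Littlewood theorem to pass to $C^{0,\alpha}(\D,\H)$; you instead estimate $Kf(z_1)-Kf(z_2)$ directly in the interior. For (ii) the paper points to Vekua's estimates of the two-singularity integrals $\int_\D g(\zeta)(\zeta-z_1)^{-\alpha}(\zeta-z_2)^{-\beta}\,d^2\zeta$, whereas you use the midpoint-splitting argument for $S$. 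Both choices are standard and equally valid; your route is slightly more self-contained, while the paper's has the advantage of literally citing \cite{Ga,Ve} page-by-page.
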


For the proof of (i) in the scalar case see \cite{Ga} when $m=0$ and \cite{Ve} for $m \ge 1$.
Let  $f \in C^{0,\alpha}(b\D,\H)$.  The classical  argument
deals with the integrals  of the form
\begin{eqnarray*}
\frac{1}{2\pi i} \int_{b\D} \frac{f(\zeta) - f(\zeta_0)}
{\zeta - z}d\zeta,
\end{eqnarray*}
where $\zeta_0 \in b\D$. The property (\ref{Bochner1}) of Bochner's integral shows that the estimates of these integrals performed in \cite{Ga} for scalar functions, literally go through for vector functions.  This allows us to establish  the Plemelj-Sokhotski formulae for $Kf$ and to  deduce that the boundary values of $Kf$ on $b\D$ satisfy the $\alpha$-Lipschitz condition quite similarly to the scalar case \cite{Ga}. Then $Kf \in C^{0,\alpha}(\D)$  for example, by the classical Hardy-Littlewood theorem (see \cite{Gol}); its proof can be extended to the vector case without changes. This is the only type of modifications which are required in order to extend the proofs of \cite{Ga} and \cite{Ve} from the scalar case to  the case of vector functions.

The proof of (ii) is contained in \cite{Ve} for  scalar functions. This proof is based on properties  of  integrals  of the form
$$
\int_\D \frac{g(\zeta)}{(\zeta - z_1)^\alpha (\zeta - z_2)^\beta}(i/2)d\zeta\wedge d\bar\zeta.
$$
Here $z_j \in \D$, $\alpha,\beta > 0$ and a function $g:\D \to \H$ coincides with  $f(\zeta)$ or with $f(\zeta) \pm f(z_j)$. Note that integrals along the boundary arising in \cite{Ve} disappear in our case since we deal with the circle. Applying the estimate (\ref{Bochner1}), we reduce the estimates of these integrals to the estimates of their scalar kernels performed in \cite{Ve}. The argument of \cite{Ve} literally goes through for the case of vector functions.

\section{Local existence and regularity of pseudoholomorphic discs}

In this section we establish two basic properties of pseudoholomorphic curves: the local existence (Nijenhuis-Woolf's theorem in the finite dimensional case, see \cite{Aud}) and the interior regularity. Since these properties are local, it suffices to establish them for ``small'' discs.

\subsection{Local existence}

Let $\H$ be a Hilbert space (identified with complex $l_2$) and $J$ be an almost complex structure on $\H$. Denote by $\B^\infty = \{ Z \in \H: \| Z \| < 1 \}$ the unit ball in $\H$. A simple but very useful fact is that in a neighborhood $p + r\B^\infty$ of   every point $p \in \H$ the structure $J$ can be represented as a small perturbation (in every $C^k$ norm) of the standard structure $J_{st}$; furthermore, the size of perturbation decreases to $0$ as $r \to 0$.

More precisely, we have the following lemma.

\begin{lemma}
\label{localcordinates1}
For every point $p \in
\H$, every $k \geq 1$  and every $\lambda_0 > 0$ there exist a neighborhood $U$ of $p$ and a
coordinate diffeomorphism $Z: U \to \B^\infty$ such that
\begin{itemize}
\item[(i)] $Z(p) = 0$,
\item[(ii)] $Z_*(J)(0) = J_{st}$,
\item[(iii)]  the direct image $Z_*(J)$ satisfies $|| Z_*(J) - J_{st}
||_{C^k(\bar {\B^\infty})} \leq \lambda_0$.
\end{itemize}
\end{lemma}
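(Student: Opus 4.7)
The plan is to reduce to a situation where $J$ is a small $C^k$-perturbation of $J_{st}$ by composing three simple changes of coordinates: a translation to center the picture at $p$, a linear conjugation to normalize $J$ at the central point to $J_{st}$, and a dilation to rescale a tiny ball onto the unit ball.

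First I would translate so that $p$ becomes the origin. Next, using Theorem \ref{TheoEquiv}, I would pick a bounded invertible linear operator $R:\H\to\H$ such that $R\,J(p)=J_{st}\,R$, and set the first coordinate map to be $q\mapsto R(q-p)$. In these coordinates $J(0)=J_{st}$. Finally, for a small parameter $r>0$ to be chosen, I would compose with the dilation $W\mapsto rW$ and define
$$
Z(q)=\tfrac{1}{r}\,R(q-p),\qquad U=p+R^{-1}(r\B^\infty),
$$
so that $Z:U\to\B^\infty$ is a diffeomorphism with $Z(p)=0$. A direct computation using $dZ=\tfrac{1}{r}R$ gives
$$
Z_*(J)(W)=R\,J\bigl(p+rR^{-1}W\bigr)\,R^{-1},\qquad W\in\B^\infty,
$$
so (ii) is immediate at $W=0$.

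For (iii), I would estimate the $C^k$ norm of $Z_*(J)-J_{st}$ on $\bar\B^\infty$ by the chain rule. For the $|\alpha|$-th derivative in $W$, each differentiation produces a factor $rR^{-1}$, so
$$
D^\alpha\bigl(Z_*(J)\bigr)(W)=r^{|\alpha|}\,R\cdot \bigl(D^\alpha J\bigr)\bigl(p+rR^{-1}W\bigr)\bigl[(R^{-1}\,\cdot\,)^{\otimes|\alpha|}\bigr]\cdot R^{-1},
$$
whose operator norm is bounded by $r^{|\alpha|}\,\|R\|\,\|R^{-1}\|^{|\alpha|+1}\,\|D^\alpha J\|_{C^0(p+r\B^\infty)}$. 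For $|\alpha|\ge 1$ this is $O(r^{|\alpha|})=O(r)$; for $|\alpha|=0$ the Lipschitz continuity of $J$ (implied by its $C^1$ smoothness on a fixed neighborhood of $p$) gives
$$
\|Z_*(J)(W)-J_{st}\|=\|R(J(p+rR^{-1}W)-J(p))R^{-1}\|\le C r,
$$
uniformly in $W\in\bar\B^\infty$. Summing these estimates yields $\|Z_*(J)-J_{st}\|_{C^k(\bar\B^\infty)}\le C(k,J,R)\,r$, and choosing $r$ small enough makes the right-hand side $\le\lambda_0$.

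There is no serious obstacle here beyond keeping track of the chain-rule factors and observing that all the constants depend only on $J$ near $p$, on $k$, and on the fixed operator $R$; the key point is that dilation is the mechanism that simultaneously kills higher derivatives (via the $r^{|\alpha|}$ factor) and shrinks the $C^0$ deviation of $J$ from its value at the center.
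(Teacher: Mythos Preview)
Your proposal is correct and follows essentially the same approach as the paper: translate to send $p$ to $0$, apply a linear conjugation (via Theorem~\ref{TheoEquiv}) to normalize $J(p)$ to $J_{st}$, and then dilate to make the $C^k$-deviation small. The paper's proof is terser and simply asserts $\lim_{\lambda\to 0}\|J_\lambda-J_{st}\|_{C^k(\bar\B^\infty)}=0$ without spelling out the chain-rule bookkeeping you carried out, but the argument is the same.
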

\begin{proof} The linear almost complex structure $J(p)$ is equivalent to $J_{st}$. Hence
there exists a diffeomorphism $Z$ of a neighborhood $U'$ of
$p \in \H$ onto $\B^\infty$ satisfying (i) and (ii). Given $\lambda > 0$ consider the dilation
$d_{\lambda}: h \mapsto \lambda^{-1}h$ for  $h \in \H$ and the composition
$Z_{\lambda}: = d_{\lambda} \circ Z$. Consider the direct image $J_\lambda = (Z_\lambda)_*(J_{st})$.
Then $\lim_{\lambda \rightarrow
0} || J_{\lambda} - J_{st} ||_{C^k(\bar
{\B^\infty})} = 0$. Setting $U = Z^{-1}_{\lambda}(\B^\infty)$ for
$\lambda > 0$ small enough, we obtain the desired statement.
\end{proof} $\blacksquare$

The central result of this section is the following

\begin{thm}
\label{NiWo}
Let $(\H,J)$ be a Hilbert space with an  almost complex structure. For integer $k \geq 1$, and $0 < \alpha <1$, every point $p \in \H$ and every tangent vector $v \in T_p\H$ there exists a $J$-holomorphic map $f:\D \to M$ of class $C^{k,\alpha}(\bar\D)$ such that $f(0) = p$ and $df_0 (\partial/\partial \Re\zeta) = t v$ for some $t > 0$.
\end{thm}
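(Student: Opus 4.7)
The plan is to reduce the statement to a nonlinear integral equation in a Lipschitz class of $\H$-valued functions and solve it by Banach's contraction principle. First I apply Lemma~\ref{localcordinates1} with a parameter $\lambda_0>0$ to be chosen, producing a chart $\phi:U\to\B^\infty$ centered at $p$ with $\phi(p)=0$, $\phi_*(J)(0)=J_{st}$, and $\|\phi_*(J)-J_{st}\|_{C^{k,\alpha}(\bar{\B^\infty})}\le\lambda_0$. In these coordinates \eqref{holomorphy} takes the form $Z_{\bar\zeta}=A(Z)\overline{Z_\zeta}$ with $A=A_{\phi_*(J)}$; since $\phi_*(J)(0)=J_{st}$, the operator $L$ from \eqref{oper1} satisfies $L(0)=0$, so $A(0)=0$ and $\|A\|_{C^{k,\alpha}(\bar{\B^\infty})}\le C\lambda_0$. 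Writing $v'=d\phi_p(v)\in\H$, the task reduces to producing $Z\in C^{k+1,\alpha}(\bar\D,\H)$ solving this equation with $Z(0)=0$ and $Z_\zeta(0)=tv'$ for some $t>0$.

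To force both interpolation conditions automatically, I use the modified Cauchy--Green operator
\[
\widetilde T g(\zeta)=T g(\zeta)-T g(0)-(Sg)(0)\,\zeta,
\]
i.e.\ $Tg$ with its affine part at the origin subtracted. By Theorem~\ref{VectorCauchyTm}(ii) the operator $T:C^{k,\alpha}(\bar\D,\H)\to C^{k+1,\alpha}(\bar\D,\H)$ is bounded, so $S=\partial_\zeta T$ is bounded from $C^{k,\alpha}$ into itself. Hence $\widetilde T:C^{k,\alpha}(\bar\D,\H)\to C^{k+1,\alpha}(\bar\D,\H)$ is bounded and satisfies $(\widetilde T g)_{\bar\zeta}=g$, $\widetilde T g(0)=0$, $(\widetilde T g)_\zeta(0)=0$. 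Consequently every solution of the fixed-point equation
\[
Z=\Psi(Z):=tv'\zeta+\widetilde T\bigl[A(Z)\overline{Z_\zeta}\bigr]
\]
is a $J$-holomorphic disc with $Z(0)=0$ and $Z_\zeta(0)=tv'$.

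To solve this equation I restrict $\Psi$ to $B_M=\{Z\in C^{k+1,\alpha}(\bar\D,\H):\|Z\|_{C^{k+1,\alpha}}\le M\}$ with $M=2t\|v'\|$ chosen small (in particular $M\le 1$, so $Z(\bar\D)\subset\B^\infty$). Because $A$ is $C^{k,\alpha}$ with $A(0)=0$ and $\|A\|_{C^{k,\alpha}(\bar{\B^\infty})}\le C\lambda_0$, the chain rule for Lipschitz classes yields $\|A(Z)\|_{C^{k,\alpha}(\bar\D,\mathcal L(\H))}\le C'\lambda_0$ uniformly for $Z\in B_M$, and the product rule for $C^{k,\alpha}$ together with the continuity of $\widetilde T$ gives
\[
\|\widetilde T[A(Z)\overline{Z_\zeta}]\|_{C^{k+1,\alpha}}\le C''\lambda_0 M,\qquad
\|\Psi(Z_1)-\Psi(Z_2)\|_{C^{k+1,\alpha}}\le C''\lambda_0\,\|Z_1-Z_2\|_{C^{k+1,\alpha}}.
\]
Fixing $\lambda_0$ small (independently of $t$) so that $C''\lambda_0<1/2$, $\Psi$ becomes a contraction of $B_M$ into itself, and Banach's theorem produces a unique fixed point $Z\in B_M$. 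Setting $f=\phi^{-1}\circ Z\in C^{k+1,\alpha}(\bar\D,\H)$ yields a $J$-holomorphic disc with $f(0)=p$; since $A(Z(0))=A(0)=0$ forces $Z_{\bar\zeta}(0)=0$, one has $df_0(\partial/\partial\Re\zeta)=d\phi^{-1}_0(Z_\zeta(0))=tv$, as required.

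The main obstacle is purely technical: establishing the chain- and product-rule estimates for $Z\mapsto A(Z)\overline{Z_\zeta}$ in the $\H$-valued Lipschitz spaces, together with the $C^{k,\alpha}$-regularity of $A:\B^\infty\to\mathcal L(\H)$ (obtained from the formula $L=(J_{st}+J)^{-1}(J_{st}-J)$ combined with the smoothness of operator inversion near an invertible point). These reduce to the classical scalar Lipschitz calculus, which goes through unchanged for Banach-space-valued maps in view of the Bochner inequality \eqref{Bochner1} and Proposition~\ref{VectIntPropI}. All smallness required for the contraction is supplied uniformly by shrinking $\lambda_0$ via Lemma~\ref{localcordinates1}; no compactness argument enters, which is essential in the Hilbert setting.
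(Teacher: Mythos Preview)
Your argument is correct and runs on the same engine as the paper's: localize via Lemma~\ref{localcordinates1} so that $A(0)=0$ with $\|A\|$ small in a Lipschitz class, convert the Cauchy--Riemann equation into an integral equation using the vector-valued Cauchy--Green operator (Theorem~\ref{VectorCauchyTm}(ii)), and solve it by a perturbation-of-the-identity argument in $C^{k+1,\alpha}(\bar\D,\H)$. The only methodological difference is in how the prescribed $1$-jet is achieved. The paper considers $\Phi_\lambda(Z)=Z+T\bigl[A_\lambda(Z)\bar Z_{\bar\zeta}\bigr]$, inverts it near $0$ by the implicit function theorem, feeds in the holomorphic data $W_{q,v}(\zeta)=q+\zeta v$, and then runs an evaluation-map argument to show every pair $(p,v)$ is hit. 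You instead modify $T$ to $\widetilde T$ so that the conditions $Z(0)=0$, $Z_\zeta(0)=tv'$ are built in, and solve a single fixed-point equation directly by Banach contraction. Your route is a bit more streamlined for the stated goal; the paper's route yields the additional information that $J$-holomorphic discs are locally parametrized by ordinary holomorphic discs, which is not needed here but is sometimes useful downstream. One small caveat: the Lipschitz estimate for the superposition $Z\mapsto A\circ Z$ in $C^{k,\alpha}$ needs $A$ one order smoother than $C^{k,\alpha}$; this is harmless since Lemma~\ref{localcordinates1} supplies smallness in any $C^m$ norm, but you should invoke it with $k+1$ in place of $k$.
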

\begin{proof} We suppose that local coordinates near $p=0$ are chosen by Lemma \ref{localcordinates1}. Its proof provides us with  the family $(J_\lambda)$ of almost complex structures over the ball $\B^\infty$ smoothly depending on the parameter $\lambda \ge 0$,and $J_0 = J_{st}$.
Each structure $J_\lambda$ is equivalent to the initial structure $J$ in a neighborhood $U_\lambda$ of $p$. A map $Z:\D \to \B^\infty$ is $J_\lambda$-holomorphic if and only if its satisfies the Cauchy-Riemann equations
\begin{eqnarray}
\label{discholomorphicity1}
Z_{\bar\zeta} - A_{\lambda}(z) \bar{ Z}_{\bar\zeta} = 0.
\end{eqnarray}
Here we use the notation $A_{\lambda} = A_{J_\lambda}$ for the complex representation of the structure $J_\lambda$.Note that
$$A_{\lambda}(0) = 0$$
because $J_{\lambda}(0) = J_{st}$. Note that $A_0 \equiv 0$.

Using the Cauchy-Green operator $T$ in $\D$ we replace equation (\ref{discholomorphicity1}) by an integral equation
\begin{eqnarray}
\label{MainIntEq}
Z + T A_{\lambda}(Z) \bar{ Z}_{\bar\zeta} = W,
\end{eqnarray}
where $W \in C^{k,\alpha}(\D,\H)$ is a holomorphic (in the usual sense) vector function in $\D$. Recall that $\bar\partial \circ T = I$. Therefore, given $W$ of this class, a solution $Z$ to (\ref{MainIntEq}) automatically is also a solution to (\ref{discholomorphicity1}). Fix $\lambda_0 > 0$ small enough and denote by $S$ the class of maps $Z \in C^{k,\alpha}(\D,\H)$ such that $Z(\D) \subset \B^\infty$.

Consider the map
$$\Phi: [0,\lambda_0] \times S \to C^{k,\alpha}(\D,\H),$$
$$\Phi: (\lambda,Z) \mapsto Z + TA_{\lambda}(Z)\bar{Z}_{\bar\zeta}.$$

This map is well defined by the regularity of $T$ (Theorem \ref{VectorCauchyTm} (ii)) and is smooth in $(\lambda,Z)$. We view $\lambda$ as a parameter and use the notation $\Phi_\lambda:= \Phi(\lambda,\bullet)$. Note that $\Phi_0 = I$, $\Phi_\lambda(0) = 0$ and $d\Phi_\lambda(0) = I$. By the implicit function theorem there exists the inverse map  $\Psi_\lambda = (\Phi_\lambda)^{-1}$ defined in a neighborhood ${\mathcal U}$ of the  origin in $C^{k,\alpha}(\D)$. The family $\Psi_\lambda$   smoothly depends on $\lambda$ and $\Psi_0 = I$.

Let $r> 0$ be small enough such that $2r\B^\infty \subset {\mathcal U}$. For $q$ and $v$ in $r\B^\infty$
consider the map $W_{q,v}(\zeta) = q + \zeta v$ holomorphic in $\zeta \in \D$. This is the usual complex line through $q$ in the direction $v$. Then $Z_{q,v,\lambda}:= \Psi_\lambda(W_{q,v})$ is a $J_\lambda$-holomorphic disc. Define the evaluation map
$$Ev_\lambda: (q,v) \mapsto (Z_{q,v,\lambda}(0), dZ_{q,v,\lambda}(0)(\partial/\partial\Re\zeta)).$$
Then $Ev_0 = I$. Hence for $\lambda$ sufficiently close to $0$ the map $Ev_\lambda$ is a diffeomorphism between neighborhoods of the origin in $\H \times \H$.
\end{proof} $\blacksquare$

\subsection{Interior regularity} The equation (\ref{holomorphy}) makes sense if $z$ belongs to the Sobolev space $W^{1,p}(\D,\H)$, $p > 2$. However its  ellipticity   implies the regularity of generalized solutions.

\begin{thm}
\label{InnerDiscRegularity}
Suppose that $A : \H \to {\mathcal L}(\H)$ is a map of class $C^{k,\alpha}$ for some integer $k \geq 1$ and $0 < \alpha < 1$. Assume also $A$ is small enough in the $C^{k,\alpha}$ norm. Let $p > 2$ be such that $\alpha < \beta = 1 - 2/p$. Then the solutions of (\ref{holomorphy}) in $W^{1,p}(\D,\H)$ are of class $C^{k,\alpha}(\D,\H)$.
\end{thm}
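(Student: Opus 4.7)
My plan is to bootstrap from $W^{1,p}(\D,\H)$ up to $C^{k,\alpha}_{\rm loc}(\D,\H)$ by iteratively inverting $I-\mathcal B$, where $\mathcal B w := S(A(Z)\bar w)$, using the $\H$-valued Cauchy--Green operator $T$ and Beurling operator $S$ supplied by Proposition \ref{VectIntPropI}.

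First I would set up the integral equation. Since $p>2$, the decomposition $Z=(\text{holomorphic})+T(Z_{\bar\zeta})$ together with Proposition \ref{VectIntPropI}(ii) gives $Z\in C^{0,\beta}_{\rm loc}(\D,\H)$ with $\beta=1-2/p>\alpha$. Writing $\Phi:=Z-T(A(Z)\bar Z_{\bar\zeta})$, the fact that $\bar\partial T=I$ (as weak derivative, by Proposition \ref{VectIntPropI}(iii)) together with (\ref{holomorphy}) gives $\Phi_{\bar\zeta}=0$ weakly; expanding $\Phi$ in an orthonormal basis of $\H$ and applying the scalar Weyl lemma componentwise, $\Phi$ is holomorphic, hence $C^\infty$ on every compact $K\subset\D$ by the $\H$-valued Cauchy formula combined with Bochner's inequality (\ref{Bochner1}). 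Differentiating $Z=\Phi+T(A(Z)\bar Z_{\bar\zeta})$ in $\zeta$ (using $\partial_\zeta T=S$) yields
$$(I-\mathcal B)Z_\zeta=\Phi',\qquad\mathcal B w:=S(A(Z)\bar w).$$

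Next I would execute the bootstrap. Since $A\in C^{k,\alpha}$ ($k\ge1$) is Lipschitz, $A(Z)$ inherits the H\"older exponent of $Z$, so $A(Z)\in C^{0,\beta}\subset C^{0,\alpha}$ on every bounded subdisc. By the smallness of $\|A\|_{C^{k,\alpha}}$, the operator $\mathcal B$ is a contraction on both $L^p(\D,\H)$ and $C^{0,\alpha}(\overline{\D'},\H)$ for every $\D'\subset\subset\D$, so $I-\mathcal B$ is invertible by Neumann series in each. Because $\Phi'\in C^\infty(\overline{\D'})$ lies in both spaces, the unique $L^p$-solution (which is $Z_\zeta$) must agree on $\overline{\D'}$ with the unique $C^{0,\alpha}$-solution of $(I-\mathcal B)w=\Phi'$, yielding $Z_\zeta\in C^{0,\alpha}_{\rm loc}$ and, via (\ref{holomorphy}), $Z\in C^{1,\alpha}_{\rm loc}(\D,\H)$. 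Inductively, if $Z\in C^{j,\alpha}_{\rm loc}$ with $1\le j<k$, the chain rule (Faa di Bruno) and the Banach-algebra property of $C^{j,\alpha}$ give $A(Z)\in C^{j,\alpha}_{\rm loc}$; for $\|A\|_{C^{k,\alpha}}$ small enough, $\mathcal B$ remains a contraction on $C^{j,\alpha}(\overline{\D''},\H)$ for $\D''\subset\subset\D'$, and inverting against the smooth $\Phi'$ promotes $Z_\zeta$ to $C^{j,\alpha}$, hence $Z$ to $C^{j+1,\alpha}$ via (\ref{holomorphy}). After $k-1$ iterations, $Z\in C^{k,\alpha}_{\rm loc}(\D,\H)$.

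The hardest part will be sustaining $\|\mathcal B\|<1$ at every inductive step. Smallness of $\|A\|_\infty$ alone does not bound the multiplication operator $w\mapsto A(Z)\bar w$ on $C^{j,\alpha}$, because the chain rule introduces polynomial factors of $\|Z\|_{C^{j,\alpha}}$, a quantity only controlled a posteriori from the preceding step. One must therefore read ``$A$ small in $C^{k,\alpha}$'' as smallness relative to the finite sequence of $C^{j,\alpha}$-bounds on $Z$ accumulated for $0\le j<k$; this quantitative book-keeping, together with the $\H$-valued extensions of the singular integrals, constitutes the essential technical content of the proof.
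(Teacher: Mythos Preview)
Your overall strategy---invert $I-(\text{small perturbation})$ simultaneously in a low-regularity space and a higher one, then use uniqueness to bootstrap---is exactly the paper's, and your operator $\mathcal B w=S(A(Z)\bar w)$ acting on $Z_\zeta$ is essentially the $\partial_\zeta$-derivative of the paper's operator $L:W\mapsto W+T(a\,\bar W_{\bar\zeta})$ acting on $W$ itself.

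The substantive difference, and the place where your argument has a gap, is the localization. You assert that $\mathcal B$ is a contraction on $C^{0,\alpha}(\overline{\D'},\H)$, but $S$ integrates over all of $\D$, so $\mathcal B w$ is not even defined for $w$ given only on $\overline{\D'}$. Working instead on $C^{0,\alpha}(\bar\D,\H)$ fails for a different reason: your right-hand side $\Phi'$ need not lie there, since $\Phi=Z-T(Z_{\bar\zeta})$ is holomorphic on $\D$ but merely $C^{0,\beta}(\bar\D)$ by Morrey, and its derivative can blow up at $b\D$. Replacing $S$ by $S_{\D'}$ changes the operator, so the $L^p$-equation and the $C^{0,\alpha}$-equation would no longer coincide and the uniqueness step collapses.

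The paper sidesteps all of this with a cut-off. Put $W=\chi Z$ with $\chi\in C^\infty_0((1-\varepsilon)\D)$, $\chi\equiv1$ on $(1-2\varepsilon)\D$; then $W$ solves $W_{\bar\zeta}+a\,\bar W_{\bar\zeta}=b$ with $a=-A\circ Z$ (cut off as well) and $b=\chi_{\bar\zeta}(a\bar Z+Z)\in C^{0,\alpha}(\bar\D,\H)$. Because $W$ has compact support, the integral form $W+T(a\,\bar W_{\bar\zeta})=Tb+h$ holds on all of $\C$ with each term bounded at infinity, so Liouville forces $h\equiv0$. Now the single operator $L$ acts globally on both $W^{1,p}(\D,\H)$ and $C^{1,\alpha}(\D,\H)$, is invertible on each by smallness of $a$, and the right-hand side $Tb$ genuinely lies in $C^{1,\alpha}(\D,\H)$; uniqueness then yields $W\in C^{1,\alpha}$, hence $Z\in C^{1,\alpha}$ on $(1-2\varepsilon)\D$. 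The cut-off is precisely the device that places both sides of your uniqueness argument in the \emph{same} global function space; once you insert it, your scheme and the paper's coincide. Your closing observation about the smallness of $A$ having to absorb the accumulated $C^{j,\alpha}$ bounds on $Z$ is correct and applies equally to the paper's iteration.
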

\begin{proof} Fix $\varepsilon > 0$ and a real cut-off function $\chi \in C^\infty(\C)$ such that $supp (\chi) \in (1- \varepsilon)\D$, and $\chi \equiv 1$ on $(1 - 2\varepsilon)\D$. Set $W = \chi Z$. Then $W$ satisfies the linear non-homogeneous equation
\begin{eqnarray}
\label{DiscReg1}
W_{\bar\zeta} + a \bar W_{\bar\zeta} = b
\end{eqnarray}
with $a(\zeta) = -(A \circ Z)(\zeta)$ and $b(\zeta) = \chi_{\bar\zeta}(a \bar Z + Z)(\zeta)$.
By the assumptions of the theorem and the Morrey-Sobolev embedding (Theorem \ref{Morrey}), these coefficients are of class $C^\alpha(\D,\H)$. The coefficient $b$ extends by $0$ as a $C^\alpha$ function on the whole complex plane $\C$. Furthermore, since
$supp (W) \subset (1- \varepsilon)\D$, we can multiply $a$ by a suitable cut-off function equal to 1 on $(1 - \varepsilon)\D$ and vanishing outside $\D$. Then the equation (\ref{DiscReg1}) does not change. This equation is equivalent to the integral equation
\begin{eqnarray}
\label{DiscReg2}
W + T a  \bar{W}_{\bar\zeta} = Tb + h,
\end{eqnarray}
where $h$ is a usual holomorphic function on $\C$ and $T$ is the Cauchy-Green operator in $\D$. Since $W$, $T a  \bar{W}_{\bar\zeta}$ and $Tb$ are bounded at infinity, we conclude that $h \equiv 0$. The linear operator
$$
L: C^{1,\alpha}(\D,\H) \to
C^{1,\alpha}(\bar\D,\H),\qquad
L: W \mapsto W + T a  \bar{W}_{\bar\zeta}
$$
is well-defined and bounded by Theorem \ref{VectorCauchyTm} (ii). This operator is invertible since the norm of $a$ is small. Hence the equation (\ref{DiscReg2}) admits a unique solution in $C^{1,\alpha}(\D,\H)$. However, the same operator $L$ viewed as $L: W^{1,p}(\D,\H) \to W^{1,p}(\D,\H)$ also has the trivial kernel. Hence, $W$ is of class $C^{1,\alpha}(\D,\H)$. We conclude the proof by iterating this argument.
\end{proof} $\blacksquare$

\section{Boundary value problems for $J$-holomorphic discs}

Let $E$ be a closed real submanifold in a Hilbert space $(\H,J)$. For applications of theory of pseudoholomorphic curves it is important to construct $J$-holomorphic discs $Z:\D \to \H$ with boundary attached to $E$. Of course, here we consider discs which are at least continuous on $\bar{\D}$ (in fact, they usually belong to Sobolev classes $W^{1,p}(\D,\H)$ with $p > 2$, so they are $\alpha$-Lipschitz in $\D$ by the Morrey-Sobolev embedding). As usual, we say that the boundary of such a disc is attached or glued to $E$ if $Z(b\D)$ is contained in $E$. Usually $E$ is defined by a finite or infinite system of equations. To be concrete, consider a smooth map $\rho:\H \to X$ and assume that
$$E = \rho^{-1}(0),$$
here $X$ is an appropriate space of finite or infinite dimension.
Then attaching a $J$-holomorphic disc to $E$ reduces to the following boundary value problem
\begin{equation}
\label{BVP}
\begin{cases}
Z_{\bar\zeta}= A_J(Z)\bar Z_{\bar\zeta}, & \zeta \in \D\\
\rho(Z)|_{b\D} = 0.
\end{cases}
\end{equation}
This boundary value problem for a quasi-linear first order PDE in general has non-linear boundary conditions. Even in the finite-dimensional case the general theory of such problems is not available. Gromov was able to construct solutions in some important special cases, for example, when $E$ is a compact Lagrangian submanifold of $\C^n$. As we mentioned in the introduction, his method is based on the compactness theorems and the deformation theory of pseudoholomorphic curves; a direct attempt to extend these techniques to the  Hilbert space case leads to difficulties.
Our approach to this boundary value problem is inspired by the theory of scalar Beltrami equation and allows us to solve (\ref{BVP}) for some special choices of $\rho$ arising in applications. It can be described as follows.

{\bf Step 1.} We replace the boundary value problem (\ref{BVP}) by a system of (singular) integral equations which can be written in the form:
\begin{eqnarray}
\label{OpEq}
F(Z) = Z.
\end{eqnarray}
For some special choices of $\rho$ this step can be done using the  integral operators $T_Q$, $T_j$ and $S_j$ studied in Section 3. They are modifications  of the Cauchy integral so $\bar\partial T_Q = I$. This allows us to use them in order to construct the solutions of (\ref{holomorphy}) as we did in the previous section. Their boundary properties are determined by the choice of their kernels and imply that a solution to (\ref{OpEq}) automatically satisfies the boundary condition from (\ref{BVP}).

{\bf Step 2.} We prove that in suitably chosen spaces of maps $\D \to \H$ the operator $F$ is compact and takes some convex subset (in fact, some ball) to itself. This gives the existence of solution to (\ref{OpEq}) by Schauder's fixed point theorem. In order to obtain the required properties of $F$, the results of Section 3 are crucially used. First, regularity properties of integral operators are necessary in order to define $F$ correctly in suitable functional spaces. The second key information is a precise control over the norms of these integral operators in the spaces $W^{1,p}(\D,\H)$. Note that the compactness of $F$ also requires some regularity in scales of Hilbert spaces.

We illustrate this approach in two special cases important for applications.

\subsection{Gluing discs to a cylinder}

Let $\H$ be a complex Hilbert space with fixed basis.
Let $(\theta_n)_{n=1}^\infty$ be a sequence of positive numbers
such that $\theta_n \to \infty$ as $n \to \infty$,
for example $\theta_n=n$.
Introduce a diagonal operator
$D={{\rm Diag}}(\theta_1, \theta_2, \ldots).$
For $s\in\R$ we define $\H_s$ as a Hilbert space with the following
inner product and norm:
\[
\<x,y\>_s=\<D^s x,D^s y\>, \quad
\|x\|_s=\|D^s x\|.
\]
Thus $\H_0=\H$,
$\H_s=\{x\in \H: \|x\|_s<\infty \}$ for $s>0$, and
$\H_s$ is the completion of $\H$ in the above norm for $s<0$.
The family $(\H_s)$ is called a Hilbert scale corresponding
to the sequence $(\theta_n)$. For $s > r$, the space $\H_s$
is dense in $\H_r$, and the inclusion $\H_s \subset \H_r$ is compact.
We refer to \cite{Ku2} for a detailed account concerning Hilbert scales  and their applications to Hamiltonian PDEs.

We also have the following analog of Sobolev's compactness theorem: the inclusion
\begin{eqnarray}
\label{Sobolev}
W^{1,p}(\D,H_r) \subset C(\D,\H_s),\,\,\, s <r,\,\,\, p>2
\end{eqnarray}
is compact.
This result is well-known \cite{Aub} in the case of vector functions defined on an interval of $\R$. In the case of the unit disc the required result can be deduced from Morrey's embedding: there exists a bounded inclusion $W^{1,p}(\D,\H_r) \to C^{\alpha}(\D,\H_r)$ with $p > 2$, $\alpha = (p-2)/p$ (see Appendix I). By the Arzela-Ascoli theorem the embedding $C^{\alpha}(\D,\H_r) \to C(\D,\H_s)$ is compact, hence (\ref{Sobolev}) is compact.

We now use the notation
\begin{eqnarray}
\label{coordinates3}
Z = (z,w) = (z,w_1,w_2,...)
\end{eqnarray}
for the coordinates in $\H$. Here $z=\langle Z,e_1 \rangle \in\C$. For a domain $\Omega \subset \C$ we define the cylinder $\Sigma_\Omega= \{ Z \in \H: z \in \Omega \}$ in $\H$. Denote by $\Delta$ the triangle $\Delta = \{ z \in \C: 0 < \Im z < 1 - | \Re z | \}$. Note that $\Area(\Delta) = 1$. Put $\Sigma:= \Sigma_\Delta$.

\begin{thm}
\label{ThDiscs0}
Let $A(Z):\H_0 \to \H_0$, $Z \in \H_0$  be a  continuous family of  linear  operators such that $A(Z): \H_s \to \H_s$  is bounded for $s\in [0,s_0]$, $s_0 > 0$ and  $A(Z) = 0$ for  $Z \in\H\setminus\Sigma$. Suppose that
\begin{eqnarray}
\label{norms3}
\| A(Z) \|_{H_s} \le a
\end{eqnarray} for some $a < 1$ and all $Z$.
Then there exists $p > 2$ such that for every point $(z^0,w^0) \in \Sigma$ there is a solution $Z\in W^{1,p}(\D,\H_0)$ of (\ref{holomorphy}) such that $Z(\bar\D)\subset\bar\Sigma$, $(z^0,w^0) \in Z(\D)$, $\Area(Z) = 1$, and
\begin{eqnarray}
\label{BC0}
Z(b\D) \subset
b\Sigma.
\end{eqnarray}
\end{thm}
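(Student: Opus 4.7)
The plan is to implement the two-step strategy outlined at the start of the section: reformulate the boundary value problem as a singular integral equation built from the boundary-adapted Cauchy operators of Section 3, and find a fixed point by Schauder's theorem, with compactness supplied by the Hilbert-scale Sobolev embedding (\ref{Sobolev}).

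First, fix a conformal map $\varphi:\D\to\Delta$ normalized so that the arcs $\gamma_1,\gamma_2,\gamma_3$ are sent to the three sides of $\Delta$ and so that $\varphi(0)=z^0$. The reference $J_{st}$-holomorphic disc $\Phi(\zeta)=(\varphi(\zeta),w^0)$ lies in $\bar\Sigma$, has boundary in $b\Sigma$, passes through $(z^0,w^0)$, and has symplectic area $\Area(\Delta)=1$. Seek $Z$ as a fixed point of
\[
F(Z) \;:=\; \Phi + \mathbb{T}\bigl[A(Z)\,\bar{Z}_{\bar\zeta}\bigr],
\]
where $\mathbb T$ acts coordinate-wise on $\H$-valued functions, applying $T_2$ to the first coordinate and $T_1$ to each subsequent one (both extended via Proposition \ref{VectIntPropI}). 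Since $\bar\partial\,T_1=\bar\partial\,T_2=I$ and $\Phi$ is holomorphic, any fixed point of $F$ satisfies the Cauchy--Riemann equation (\ref{holomorphy}). The boundary behavior of $T_2$ (Proposition \ref{OpBounValScal}(iii) and (\ref{BC})) forces, on each arc $\gamma_j$, the first coordinate $\varphi(\zeta)+T_2 g_1(\zeta)$ to lie on the line through $\varphi(\zeta)$ parallel to the $j$-th side of $\Delta$, hence on that side itself, while $\Re T_1=0$ on $b\D$ imposes a compatible constraint on each subsequent coordinate keeping $Z(b\D)\subset b\Sigma$.

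For the a priori estimate, differentiate $F(Z)=Z$ in $\zeta$ to obtain $Z_\zeta=\Phi_\zeta+\mathbb S[A(Z)\bar Z_{\bar\zeta}]$, where $\mathbb S$ is the corresponding $S_2\oplus S_1$-operator. Taking $L^p(\D,\H_{s_0})$-norms and using $\|A(Z)\|_{\H_{s_0}}\le a$ gives
\[
\|Z_\zeta\|_{L^p(\D,\H_{s_0})}\;\le\;\frac{\|\Phi_\zeta\|_{L^p}}{1-a\,\|\mathbb S\|_{L^p}},
\]
finite provided $p>2$ is chosen close enough to $2$ so that $a\cdot\max(\|S_1\|_{L^p},\|S_2\|_{L^p})<1$; this is possible by Proposition \ref{OpBounValScal}(v), and one also restricts $p$ to the common range of boundedness of $T_2,S_2$. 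A sufficiently large closed ball $B\subset W^{1,p}(\D,\H_{s_0})$ is then $F$-invariant. By (\ref{Sobolev}), $B$ embeds compactly in $C(\D,\H_s)$ for $s<s_0$. To apply Schauder's theorem in $C(\D,\H_s)$, show that $F|_B$ is continuous with precompact image: if $Z_n\to Z$ in $C(\D,\H_s)$, the uniform continuity of $A$ on compact subsets of $\H_0$ gives uniform convergence $A(Z_n)\to A(Z)$ in $\mathcal L(\H_0)$; the $L^p(\D,\H_{s_0})$-boundedness of $Z_{n,\zeta}$ together with distributional convergence $Z_n\to Z$ yields $Z_{n,\zeta}\rightharpoonup Z_\zeta$ weakly in $L^p(\D,\H_{s_0})$, so $A(Z_n)\bar Z_{n,\bar\zeta}\rightharpoonup A(Z)\bar Z_{\bar\zeta}$ weakly; finally, the compactness of $\mathbb T:L^p(\D,\H_{s_0})\to C(\D,\H_s)$ (the composition of the bounded map into $W^{1,p}(\D,\H_{s_0})$ with the compact embedding) converts this into strong convergence $F(Z_n)\to F(Z)$ in $C(\D,\H_s)$.

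The remaining conclusions follow from Stokes' theorem and a perturbation argument. Applying Stokes to the Liouville primitive $\lambda=\tfrac{i}{2}\sum Z_j\,d\bar Z_j$ of $\omega$, the $T_1$-induced constraint $\Re w_j=\Re w_j^0$ on $b\D$ makes every term $\int_{b\D}w_j\,d\bar w_j$ vanish by direct computation, while the first-coordinate contribution equals the degree of $z^\star|_{b\D}:b\D\to b\Delta$ times $\Area(\Delta)=1$, which equals $1$ by continuous deformation from the degree-$1$ reference map $\varphi$ (in a Leray--Schauder homotopy through $tA$, $t\in[0,1]$). Passage through $(z^0,w^0)$ is enforced by the choice $\varphi(0)=z^0$ together with a small implicit function theorem adjustment in the Möbius parameter of $\varphi$ and the base point $w^0$. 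The main obstacle is the compactness step: in infinite dimensions balls of $W^{1,p}(\D,\H_0)$ are not relatively compact in $C(\D,\H_0)$, and the Hilbert-scale hypothesis on $A$ is precisely what restores compactness via the embedding (\ref{Sobolev}). A secondary delicate point is that $a$ is not assumed small, so Banach contraction is unavailable; the argument must exploit Proposition \ref{OpBounValScal}(v) to gain the factor strictly less than $1$ in the a priori estimate.
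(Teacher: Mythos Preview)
Your overall scheme --- building the disc from the boundary-adapted operators $T_1$, $T_2$, controlling $\|S_j\|_{L^p}$ near $p=2$ via Proposition \ref{OpBounValScal}(v), and invoking Schauder with compactness supplied by the Hilbert-scale embedding (\ref{Sobolev}) --- is exactly the machinery the paper uses. The structural difference is that the paper organises the fixed point in two layers rather than one: given $(z,w)\in C(\D,\H_0)$, one first solves the \emph{linear} contraction (\ref{mainsystem22}) for the derivatives $(u,v)\in L^p(\D,\H_s)$ (this is where $a\,\|S_j\|_{L^p}<1$ enters), and only then views (\ref{MR}) as a compact nonlinear map in the unknowns $(z,w,\tau)$. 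Your single map $F(Z)=\Phi+\mathbb T[A(Z)\bar Z_{\bar\zeta}]$ forces the domain to be $W^{1,p}$, and you must then argue that $F$ is continuous in the coarser $C(\D,\H_s)$-topology on a $W^{1,p}$-ball; this can be made to work, but it is more delicate than the paper's route, where the outer map is defined directly on $C(\D,\H_0)$ with no derivative of the unknown in sight.

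There is, however, a genuine gap: the mechanism for forcing $(z^0,w^0)\in Z(\D)$. Your ``small implicit function theorem adjustment'' in the M\"obius parameter of $\varphi$ and in the base point is not available, because $A$ is not assumed small (only $\|A\|\le a<1$), so no perturbation-theoretic inverse is at hand; Schauder gives existence but neither uniqueness nor smooth parameter dependence. The paper deals with this by writing $w=T_1v-T_1v(\tau)+w^0$, so that $w(\tau)=w^0$ is automatic, and then adjoining an auxiliary scalar equation in the unknown $\tau\in\D$ enforcing $z(\tau)=z^0$; the enlarged system (\ref{OpEq}) is still of Schauder type. Without such a device your argument produces \emph{a} disc with the correct boundary data and area, but not one guaranteed to pass through the prescribed interior point. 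A smaller technical issue: normalising $\varphi(0)=z^0$ instead of $\varphi(\pm1)=\pm1$, $\varphi(i)=i$ moves the vertex-preimages on $b\D$, so the kernel of $T_2$ --- which is tied to the specific singular points $\pm1,i$ --- would have to be modified accordingly.
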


The proof is given in \cite{SuTu4} (in the finite dimensional case the present method was introduced in \cite{SuTu1}). It follows the general method described above and reduces the problem to solution of an operator equation of type (\ref{OpEq}). Let us present the key idea.

Consider the biholomorphism $\Phi:\D \to \Delta$ satisfying $\Phi(\pm 1) = \pm 1$ and $\Phi(i) = i$. Note that $\Phi \in W^{1,p}(\D)$ for $p\ge 2$ close enough to $2$ by the classical results on boundary behavior of conformal maps. We use the integral operators $T_1$, $T_2$, $S_1$, $S_2$ introduced in Section 3.1.

We look for a solution $Z = (z,w):\D \to \H_0$  of (\ref{holomorphy}) of class $W^{1,p}(\D,\H_0)$, $p > 2$, in the form
\begin{equation}
\label{MR}
\begin{cases}
z = T_2u + \Phi,\\
w = T_1v - T_1v(\tau) + w^0.
\end{cases}
\end{equation}
for some $\tau \in \D$; hence, $w(\tau) = w^0$.
The Cauchy-Riemann equation (\ref{holomorphy}) for $Z$
of the form (\ref{MR}) turns into the integral equation
\begin{eqnarray}
\label{mainsystem22}
\left(
\begin{array}{cl}
 u\\
 v
\end{array}
\right) = A(z,w)\left(
\begin{array}{cl}
 \bar{S_2u} +\bar{\Phi'}\\
 \bar{S_1v}
\end{array}
\right).
\end{eqnarray}
We have to show  that there exists a solution of (\ref{MR}, \ref{mainsystem22}) so that $z(\tau)=z^0$ for some $\tau\in\D$. It follows from the estimates of the norms of operators $S_j$ in Section 3, that given $(z,w) \in C(\D,\H_0)$ the operator in (\ref{mainsystem22}) is a contraction and admits a unique fixed point $(u,v)$ in $L^p(\D,\H_s)$ for $p > 2$ close to $2$ and $s \in [0,s_0]$. With this $(u,v)$, (\ref{MR}) can be viewed as a non-linear equation for $(z,w)$.
Adding to (\ref{MR}) an auxiliary equation explicitly containing $\tau$ as a scalar unknown, we obtain a system of type (\ref{OpEq}) in $C(\D,\H_0)$. Regularity of $A$ in Hilbert scales imposed by the hypothesis of the theorem is used here in order to assure the compactness of Sobolev's embedding $W^{1,p}(\D,\H_s) \to C(\D,\H_0)$, $p > 2$. This in turn gives the compactness of the operator $F$ from (\ref{OpEq}) required by hypothesis of Schauder's theorem. Again using precise estimates of the norms of operators $T_j$ and $S_j$ in the spaces $W^{1,p}(\D,\H_s)$ established in Section 3, we show that the operator $F$ leaves some ball in $C(\D,\H_0)$ invariant which is sufficient (together with the compactness of $F$) in order to apply Schauder's fixed point theorem. See \cite{SuTu1,SuTu4} for details.

\subsection{Gluing J-holomorphic discs to real tori}

Represent $\H$ as the direct sum $\H = \C_z \oplus \H_w$ where $Z = (z,w_1,w_2,...) = (z, w)$.

Let $J$ be an almost complex structure in $\H$
with complex matrix $A$ of the form
\begin{eqnarray}
\label{Aab}
A = \left(
\begin{array}{cll}
a & & 0\\
b & & 0
\end{array}
\right)
\end{eqnarray}
where $a:\H \to \C_z$ and $b:\H \to \H_w$.
Then the equation (\ref{holomorphy}) means that a map $\D\ni\zeta \mapsto (z(\zeta),w(\zeta))\in\D \times \H_w$
is $J$-holomorphic if and only if it satisfies
the following quasi-linear system:
\begin{equation}
\label{mainsystem}
\begin{cases}
\, z_{\bar\zeta}=a(z,w)\bar z_{\bar\zeta}\\
\, w_{\bar\zeta}=b(z,w)\bar z_{\bar\zeta}.
\end{cases}
\end{equation}
We assume that $|a(z,w)|\le a_0<1$,
which implies the ellipticity of the system.

We are looking for pseudoholomorphic discs $Z = (z,w)$ with boundary glued to the ``torus'' $b\D \times \{ w \in \H_w: \| w \| = r \}$ with $r > 0$. This leads to the boundary value problem for (\ref{Aab}) with non-linear boundary conditions.

Our main result here is the following theorem which can be viewed as a generalization of the Riemann mapping theorem.

\begin{thm}
\label{Riemann}
Suppose that for every $Z \in \H$ the operator $A(Z):\H_s \to \H_s$ is bounded for each $s \in [0,s_0]$ with some $s_0 > 0$.
Let
$a:\bar\D\times\bar{(1+\gamma)\B^\infty}\to \C$,
$\gamma>0$, $b:\bar\D\times\bar{(1+\gamma)\B^\infty}\to \H_w$.  Let $0<\alpha<1$.
Suppose $a(z,w)$ and $b(z,w)$ are $C^\alpha$
in $z$ uniformly in $w$ and $C^{0,1}$ (Lipschitz)
in $w$ uniformly in $z$.
Suppose
\begin{eqnarray*}
|a(z,w)|\le a_0 < 1, \qquad
a(z,0)=0, \qquad
b(z,0)=0.
\end{eqnarray*}
Then there exist $C>0$ and integer $N\ge1$
such that for every integer $n \ge N$, every  $0 < r \leq 1$ and every $V \in \H_w$, $\| V \| = r$ (alternatively, there exist $C>0$ and $0<r_0\le1$
such that for every $n\ge0$ and $0<r<r_0$),
the system (\ref{mainsystem}) has a  solution
$(z,w):\bar\D\to\bar\D\times\bar{(1 + \gamma)\B^\infty}$
of class $W^{1,p}(\D,\H)$, for some $p > 2$,
with the properties:
\begin{itemize}
\item[(i)] $|z(\zeta)|=1$, $\| w(\zeta)\|=r$ for $|\zeta|= 1$;
$z(0) = 0$, $z(1)=1$ and $w(1)= V$;
\item[(ii)] $z: \bar \D \to \bar \D$
is a homeomorphism;
\item[(iii)] $\| w(\zeta)\| \le Cr|\zeta|^n$.
\end{itemize}
\end{thm}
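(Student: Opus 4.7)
The proof follows the two-step scheme described after \eqref{BVP} and carried out for Theorem~\ref{ThDiscs0} in \cite{SuTu4}: recast the boundary value problem as a fixed-point equation built from the modified Cauchy integrals $T_1,S_1$ of Section 3, then invoke Schauder's theorem in a vector-valued $L^p$-space. The crucial input is Proposition~\ref{OpBounValScal}(iii), which says that $T_1f|_{b\D}$ is purely imaginary, together with the $L^p$-bounds of parts (i), (iv) and their vector extensions from Proposition~\ref{VectIntPropI}.

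I would search for solutions of \eqref{mainsystem} of the form
$$
z(\zeta)=\zeta\exp\!\bigl(T_1u(\zeta)-T_1u(1)\bigr),\qquad
w(\zeta)=\zeta^n\bigl[V+T_1v(\zeta)-T_1v(1)\bigr],
$$
with new unknowns $u:\D\to\C$ and $v:\D\to\H_w$. Since $\Re T_1u=0$ on $b\D$, the first ansatz automatically gives $z(0)=0$, $z(1)=1$ and $|z(\zeta)|=|\zeta|\exp\Re(T_1u-T_1u(1))=1$ on $b\D$; the second gives $w(0)=0$ (for $n\ge1$), $w(1)=V$ and a pointwise bound $\|w(\zeta)\|\le|\zeta|^n(r+\|T_1v\|_\infty+|T_1v(1)|)$, which yields the vanishing estimate (iii) once $T_1v$ is controlled. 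Substituting into \eqref{mainsystem} and using $\partial_{\bar\zeta}T_1=\mathrm{Id}$, $\partial_\zeta T_1=S_1$, the PDE becomes a fixed-point system
$$
u=\frac{a(z,w)}{\zeta}E(\zeta)\bigl(1+\bar\zeta\,\overline{S_1u}\bigr),\qquad
v=\frac{b(z,w)}{\zeta^n}E(\zeta)\bigl(1+\bar\zeta\,\overline{S_1u}\bigr),
$$
with $E$ a unimodular factor. The apparent singularities $1/\zeta$ and $1/\zeta^n$ are absorbed by the hypotheses $a(z,0)=b(z,0)=0$ and the Lipschitz dependence in $w$, giving $\|a\|,\|b\|\le L\|w\|\lesssim L|\zeta|^n(r+\|T_1v\|_\infty)$.

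The right-hand-side map $F=(F_1,F_2)$ is then studied on $X=L^p(\D,\C)\times L^p(\D,\H_s)$ for some $p>2$ close to $2$ and $s\in(0,s_0]$. As in the proof of Theorem~\ref{ThDiscs0}, compactness of $F$ follows from the regularity of $T_1,S_1$ (Proposition~\ref{VectIntPropI}(iv)) combined with the compact Sobolev embedding $W^{1,p}(\D,\H_s)\hookrightarrow C(\D,\H_0)$ of~\eqref{Sobolev}. Ball invariance is where the parameters $n$ and $r$ enter: the previous bounds yield schematically $\|u\|_{L^p}\lesssim Lr\,[(n-1)p+2]^{-1/p}$ and $\|v\|_{L^p}\lesssim L(r+\|T_1v\|_\infty)\pi^{1/p}$, so taking $n\ge N$ large (first alternative) or $r\le r_0$ small (second alternative) closes a ball of radius $R=O(r)$ inside $X$. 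Schauder's theorem then produces a fixed point $(u,v)$, hence a solution $(z,w)\in W^{1,p}(\D,\H_0)$ of \eqref{mainsystem}.

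It remains to upgrade the ansatz to the exact boundary conditions (i) and to establish the homeomorphism property (ii). The identity $|z|=1$ on $b\D$ is built into the ansatz, but $\|w\|=r$ on $b\D$ reduces, via
$$\|V+T_1v-T_1v(1)\|^2=r^2+2\Re\langle V,T_1v-T_1v(1)\rangle+\|T_1v-T_1v(1)\|^2,$$
to a nonlinear scalar equation on $v|_{b\D}$; I would enforce it by reserving one real degree of freedom in $v$ as a Lagrange multiplier and solving the residual scalar equation by the implicit function theorem. This is the main technical obstacle and the point where the vector-valued nature of $w$ pushes us beyond the purely linear framework of Section 3. The homeomorphism property of $z$ in (ii) follows because $z$ is a non-constant $J$-holomorphic map between complex-dimension-one discs with $z(0)=0$, $z(1)=1$ and $z(b\D)\subset b\D$: the ansatz gives $z/\zeta$ uniformly close to $1$, so $z|_{b\D}:b\D\to b\D$ has winding number $1$; the argument principle for scalar Beltrami equations \cite{AIM,Ve} gives injectivity, and openness of non-constant $J$-holomorphic maps then gives that $z$ is a homeomorphism onto $\bar\D$.
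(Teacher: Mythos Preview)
Your overall scheme---reduce to integral equations via $T_1,S_1$ and apply Schauder's theorem with the compact embedding \eqref{Sobolev}---is the paper's scheme, and your ansatz $z=\zeta\exp(T_1u-T_1u(1))$ is exactly the substitution $z=\zeta e^u$ used there, rewritten through $T_1$. The winding-number argument for (ii) is also fine.

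The genuine gap is in your treatment of the condition $\|w(\zeta)\|=r$ on $b\D$. Your additive ansatz $w=\zeta^n[V+T_1v-T_1v(1)]$ does not linearize this, and your proposed patch cannot work as stated. The constraint $\|w(\zeta)\|=r$ is one real equation \emph{for every} $\zeta\in b\D$, i.e.\ a scalar-\emph{function} constraint, not a single scalar equation; ``one real degree of freedom in $v$'' and the implicit function theorem for ``the residual scalar equation'' do not absorb it. You would need an entire real function's worth of extra freedom, and that function would re-enter the PDE and the fixed-point map in a way you have not set up. Moreover, once $(u,v)$ is produced by the fixed-point argument from $v=b(z,w)\overline{z_\zeta}/\zeta^n$, there is no parametrized freedom left in $v$ to spend on a further boundary equation.

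The paper avoids this entirely by taking a multiplicative ansatz for $w$ as well: in the prototype of \cite{CoSuTu,SuTu3} one sets $w=r\zeta^n e^v$. Then $|w|=r|\zeta|^n e^{\Re v}$, so the nonlinear condition $|w|=r$ on $b\D$ becomes the \emph{linear} condition $\Re v|_{b\D}=0$, which is exactly what $T_1$ supplies. This is the ``linear boundary conditions'' the paper refers to, and it is the decisive trick your additive ansatz misses; with it, all of (i) is built into the ansatz and the problem reduces to a pure fixed-point equation with no leftover boundary constraint. (A minor side remark: in your $v$-equation the factor you call $E(\zeta)$ is $e^{\overline{T_1u-T_1u(1)}}$, which is bounded but not unimodular; only in the $u$-equation does the exponent collapse to $-2i\,\Im(T_1u-T_1u(1))$.)
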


The proof for the case of $\C^2$ (i.e., when $w$ is a complex-valued scalar function) with $a$ and $b$ of class
$C^\infty$ is given in \cite{CoSuTu, SuTu3}.
The proof of the present statement is similar and requires only a few modifications in the spirit of \cite{SuTu4}. We briefly describe it below.

We look for a solution of (\ref{mainsystem}) in the form
$z = \zeta e^u$, $w = r\zeta^n e^v$.
Then the new unknowns $u$ and $v$ satisfy
a similar system but with linear boundary conditions.

We reduce the system of PDEs for $u$ and $v$ to a system of
singular integral equations using suitable modifications of
the Cauchy--Green operator (\ref{Cauchy1}) and the Beurling
operator (\ref{Cauchy2}) as in \cite{CoSuTu}. Of course, here we apply them for vector-valued functions. The method in \cite{CoSuTu}
which is based on the contraction mapping principle
and the Schauder fixed point theorem,
goes through under the present assumptions on $a$ and $b$. The assumption of regularity in scales of Hilbert spaces is used similarly to the previous theorem. Of course, here we use the
extension of these operators to the space of vector-valued functions preserving their norm as described in Section 3. This is crucial for the application of the Schauder fixed point theorem.  All technical work, including regularity properties of the integral operators and their norms estimates, is done in Section 3, so now the method of \cite{CoSuTu,SuTu4} goes through literally.
This gives the existence of solutions $z$, $w$ with the required
properties (i)-(iii) in the Sobolev class
$W^{1,p}(\D,\H)$ for some $p > 2$.

\section{Non-squeezing for symplectic transformations}

Consider the space $\R^{2n}$ with the coordinates $(x_1,...,x_n,y_1,...,y_n)$ and the standard symplectic form $\omega = \sum_j dx_j \wedge dy_j$. Consider also  the  Euclidean unit ball $\B$, and define the cylinder
$\Sigma = \{ (x,y): x_1^2 + y_1^2 < 1 \}$. Seminal Gromov's non-squeezing theorem \cite{Gr} states that if for some $r,R > 0$ there exists a symplectic  embedding $f: r\B \to R\Sigma$, that is,  $f^*\omega = \omega$, then $r \le R$.

Let $\H$ be a complex  Hilbert space with a fixed orthonormal basis $\{ e_n \}$ and the standard symplectic structure $\omega$. We now use the notation
\begin{eqnarray}
\label{coordinates2}
Z = (z,w) = (z,w_1,w_2,\ldots)
\end{eqnarray}
for the coordinates in $\H$. Here $z=\langle Z,e_1 \rangle\in\C$. For a domain $\Omega \subset \C$ we define  the cylinder $\Sigma_\Omega= \{ Z \in \H: z \in \Omega \}$ in $\H$. Let $\Phi$ be a symplectomorphism. As in Section 2, we use the notation
$$P = \Phi_Z \,\,\, \mbox{and} \,\,\, Q = \Phi_{\bar Z}.$$
Recall that in Section 2 we proved that the complex representation $A_J$ for  the almost complex structure $J=\Phi_*(J_{st})$ has the form
$$A_J = Q \bar{P}^{-1}.$$
We prove a version of non-squeezing theorem under the assumption that $A_J$ is small enough.
Our main result here is the following
\begin{thm}
\label{squiz}
(Non-squeezing theorem.) Let $r,R > 0$ and  $G$ be a domain  in $\Sigma_{R\D}$. There exists $\varepsilon_0 > 0$ with the following property: if  there exists a   symplectomorphism $\Phi: r\B^\infty \to G$  such that
\begin{eqnarray}
\label{NS1}
\| Q \bar{P}^{-1}\|_{C^1(r\B^\infty)} \le \varepsilon_0
\end{eqnarray}
then $r \leq R$.
\end{thm}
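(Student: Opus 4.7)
The plan is to run Gromov's non-squeezing argument, with Theorem~\ref{ThDiscs0} replacing his compactness and transversality apparatus. Fix $\delta>0$ and $r'<r$. On the image $G=\Phi(r\B^\infty)\subset\Sigma_{R\D}$, consider the tamed almost complex structure $J=\Phi_{*}(J_{st})$: by formula~(\ref{A}) its complex representation is $A_J=Q\bar P^{-1}$, and by (\ref{norm2}) together with hypothesis~(\ref{NS1}), $A_J$ is bounded uniformly in $C^1$ by $\varepsilon_0$. Choose a bounded piecewise smooth domain $T\subset\C$ with $\overline{R\D}\subset\mathrm{int}(T)$ and $\Area(T)<\pi R^2+\delta$; since the Beltrami-type fixed-point construction of \cite{SuTu1,SuTu4} underlying Theorem~\ref{ThDiscs0} depends only mildly on the shape of the cross-section, the theorem applies with $T$ in place of the reference triangle $\Delta$ and yields a disc whose symplectic area equals $\Area(T)$.

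To extend $J$ to all of $\H$, put $G'=\Phi(r'\B^\infty)\subset G$ and multiply $A_J$ by a smooth real cut-off $\chi$ equal to $1$ on $G'$ and supported in $G\cap\Sigma_T$, extending by zero on the complement. For $\varepsilon_0$ small enough, the resulting almost complex structure $\widetilde J$ on $\H$ is tamed by $\omega$ (taming is an open condition), equals $J_{st}$ outside $\Sigma_T$, coincides with $\Phi_{*}(J_{st})$ on $G'$, and its complex representation $\widetilde A=\chi A_J$ satisfies a uniform bound $\|\widetilde A\|\le a<1$ together with the Hilbert-scale regularity required by Theorem~\ref{ThDiscs0}. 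Applied to $\widetilde J$ with marked point $p_0=\Phi(0)\in G'$, that theorem produces a disc $Z\in W^{1,p}(\D,\H)$, $p>2$, with
\[
Z_{\bar\zeta}=\widetilde A(Z)\bar Z_{\bar\zeta},\quad
Z(\bar\D)\subset\bar\Sigma_T,\quad
Z(b\D)\subset b\Sigma_T,\quad
p_0\in Z(\D),\quad
\Area(Z)=\Area(T)<\pi R^2+\delta.
\]

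Let $\zeta_0\in\D$ satisfy $Z(\zeta_0)=p_0$, and let $U_0$ be the connected component of $Z^{-1}(G')$ containing $\zeta_0$. Since $\overline{G'}\subset G\subset\Sigma_{R\D}\subset\mathrm{int}(\Sigma_T)$ and $Z(b\D)\subset b\Sigma_T$, the closure $\overline{U_0}$ lies compactly in $\D$, and $Y:=\Phi^{-1}\circ Z|_{U_0}\colon U_0\to r'\B^\infty$ is a genuine $J_{st}$-holomorphic curve (because $\Phi^{-1}$ intertwines $\widetilde J$ with $J_{st}$ on $G'$), proper onto $r'\B^\infty$ with $Y(\zeta_0)=0$ and $Y(\partial U_0)\subset b(r'\B^\infty)$. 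Since $\widetilde J$ is tamed and $Z$ is $\widetilde J$-holomorphic, $Z^{*}\omega\ge 0$ pointwise, and since $\Phi$ is symplectic, $Y^{*}\omega=Z^{*}\omega$ on $U_0$, so
\[
\Area(Y)=\int_{U_0}Y^{*}\omega=\int_{U_0}Z^{*}\omega\le\Area(Z)<\pi R^2+\delta.
\]
The classical Lelong monotonicity applied to the proper holomorphic curve $Y$ in the Hilbert ball $r'\B^\infty$ passing through its center gives $\Area(Y)\ge\pi r'^2$; combining the two inequalities and letting $r'\nearrow r$ and $\delta\searrow 0$ yields $r\le R$.

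The principal obstacle is the extension step. The cut-off $\chi$ has to simultaneously preserve tameness by $\omega$, keep $\|\widetilde A\|$ strictly below $1$ (needed for the Beltrami contraction in Theorem~\ref{ThDiscs0}), preserve the Hilbert-scale regularity required there, and accommodate a cross-section $T$ of area arbitrarily close to the sharp value $\pi R^2$. The $C^1$ control of $A_J$ imposed by (\ref{NS1}) is precisely what renders such a cut-off admissible without destroying any of these properties; a weaker hypothesis would obstruct both the contraction bound $\|\widetilde A\|<1$ and the inheritance of regularity in the scale $(\H_s)$ used in Theorem~\ref{ThDiscs0}.
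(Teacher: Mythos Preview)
Your argument has a genuine gap at the disc-existence step. You invoke Theorem~\ref{ThDiscs0}, but that theorem requires the complex representation $A(Z)$ to act boundedly on a nontrivial Hilbert scale $\H_s$ for $s\in[0,s_0]$ with $s_0>0$; this scale regularity is what makes the embedding $W^{1,p}(\D,\H_s)\hookrightarrow C(\D,\H_0)$ compact and thereby powers the Schauder fixed-point argument behind Theorem~\ref{ThDiscs0}. The hypothesis~(\ref{NS1}) of Theorem~\ref{squiz}, however, is only a $C^1$ bound on $Q\bar P^{-1}$ as an operator on $\H=\H_0$ and carries no information about $\H_s$ for $s>0$. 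Your final paragraph asserts that the $C^1$ control ``renders such a cut-off admissible'' for the scale regularity, but this is simply not what (\ref{NS1}) provides. The paper is explicit on this point in the remarks following Theorem~\ref{squiz}: the route via Theorem~\ref{ThDiscs0} needs an \emph{additional} Hilbert-scale assumption, which Theorem~\ref{squiz} is designed to avoid.

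The paper proceeds differently. It proves a separate disc-existence result, Proposition~\ref{ThDiscs}, directly by the implicit function theorem: for $A=0$ the flat discs $\zeta\mapsto(\zeta,p')$ already solve the boundary value problem in $\Sigma_{R\D}$ with area $\pi R^2$, and the linearized Riemann--Hilbert operator at these discs is surjective (verified via explicit Cauchy--Schwarz integrals). Hence for $A_J$ small in $C^1$ one perturbs to genuine $J$-discs. This uses no compactness and no Hilbert scale, which is exactly why the $C^1$ smallness in (\ref{NS1}) suffices.

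A secondary issue: you apply Lelong's inequality directly to a holomorphic curve in the infinite-dimensional ball $r'\B^\infty$. The classical Lelong result is finite-dimensional; the paper deals with this by first projecting via $\pi_n:\H\to\C^n$ for $n$ large and then invoking Lelong in $\C^n$.
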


The condition (\ref{NS1}) means that the ``anti-holomorphic part'' of $\Phi$ is small enough. It holds if $\Phi$ is a small perturbation of a holomorphic symplectic map. In particular, the  assumption (\ref{NS1})   holds automatically if $\Phi$ and $\Phi^{-1}$ are close to the identity map in the $C^2$ norm on $r\B^\infty$ and $G$ respectively. In particular, this gives the non-squeezing theorem for short-time symplectic flows. The case of long-time symplectic flows is proved in \cite{SuTu4}. Essentially it is a consequence of  Theorem \ref{ThDiscs0}. It requires an additional assumption of regularity of a symplectic flow in Hilbert scales. Theorem \ref{squiz} shows that this regularity assumption can be dropped in the short-time case. This is due to the fact that the assumption (\ref{NS1}) allows us to use the implicit function theorem instead of Schauder's fixed point theorem.

Theorem \ref{squiz} is a consequence the following proposition concerning the existence of $J$-complex discs for $J = \Phi_*(J_{st})$.

\begin{prop}
\label{ThDiscs}
Under the assumptions of Theorem \ref{squiz}, for every point $(z^0,w^0) \in \Sigma_{R\D}$ there is a solution $Z\in C^{1,\alpha}(\D,\H_0)$, $0 < \alpha < 1$ of (\ref{holomorphy}) such that $Z(\bar\D)\subset\bar\Sigma_\D$, $(z^0,w^0) \in Z(\D)$, $\Area(Z) = \pi R^2$, and
\begin{eqnarray}
\label{BC1}
Z(b\D) \subset
b\Sigma.
\end{eqnarray}
\end{prop}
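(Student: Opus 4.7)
My plan is to treat the problem as a perturbation of an explicit $J_{st}$-holomorphic reference disc and apply the implicit function theorem in a Hölder Banach space; the $C^1$-smallness of $A_J$ is exactly what permits the IFT in place of the Schauder fixed point argument used in Section 5.

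After a Möbius reparametrization of $\D$, I may assume the preimage of $(z^0,w^0)$ is $\tau=0$. For $A_J\equiv 0$ the unique holomorphic disc $Z_0:\bar\D\to\bar\Sigma_{R\D}$ with $|z_0|=R$ on $b\D$, $Z_0(0)=(z^0,w^0)$, area $\pi R^2$, and normalization $z_0'(0)>0$ is
\[
Z_0(\zeta)=(R\phi(\zeta),\,w^0),\qquad \phi(\zeta)=\frac{\zeta+z^0/R}{1+(\overline{z^0}/R)\zeta},
\]
since the boundary condition forces $z_0/R$ to be a finite Blaschke product and the area constraint then forces degree one and $w_0$ constant. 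After extending $A_J$ smoothly from $G$ to a neighborhood of $Z_0(\bar\D)$ with uniformly small $C^1$-norm, I define
\[
\mathcal F(Z,A)=\bigl(Z_{\bar\zeta}-A(Z)\overline{Z_\zeta},\,(|z|^2-R^2)|_{b\D},\,Z(0)-(z^0,w^0),\,\Im z'(0)\bigr)
\]
as a $C^1$ map $C^{1,\alpha}(\bar\D,\H)\times\{A:\|A\|_{C^1}\le\varepsilon_0\}\to C^\alpha(\bar\D,\H)\times C^{1,\alpha}(b\D,\R)\times\H\times\R$. By construction $\mathcal F(Z_0,0)=0$.

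The essential step is to show that $D_Z\mathcal F(Z_0,0)$ is a Banach isomorphism. Writing $Y=(y_1,y_w)$, this linearization is
\[
Y\mapsto\bigl(Y_{\bar\zeta},\,2\Re(\overline{z_0}\,y_1)|_{b\D},\,Y(0),\,\Im y_1'(0)\bigr).
\]
The $\H_w$-component decouples: for $(F_w,v_w)$ the unique solution of $\bar\partial y_w=F_w$, $y_w(0)=v_w$ is $y_w=TF_w-TF_w(0)+v_w$, with the Hölder bounds supplied by Theorem \ref{VectorCauchyTm}(ii) applied with $\H_w$ in place of $\H$. The $\C$-component is a scalar Riemann-Hilbert problem $\bar\partial y_1=F_1$, $\Re(\overline{z_0}y_1)=g$ on $b\D$, $y_1(0)=v_1$, $\Im y_1'(0)=\xi$; because $z_0=R\phi$ has winding number one on $b\D$, Vekua's classical theory yields a three-real-dimensional kernel of the homogeneous holomorphic problem, exactly matching the three real interior conditions, and an explicit basis calculation using the factor $z_0$ shows the evaluation map from the kernel to $\C\oplus\R\simeq\R^3$ is invertible. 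Thus $D_Z\mathcal F(Z_0,0)$ is an isomorphism, and the IFT produces a unique $Z_A\in C^{1,\alpha}(\bar\D,\H)$ near $Z_0$ with $\mathcal F(Z_A,A)=0$ for all $A$ with $\|A\|_{C^1}\le\varepsilon_0$. The inclusion $Z_A(\bar\D)\subset\bar\Sigma_{R\D}$ follows by continuity after shrinking $\varepsilon_0$; for $\Area(Z_A)=\pi R^2$ I would use $\omega=d\lambda$ on $\H$, Stokes' theorem, and the fact that the resulting boundary integral is controlled by the winding degree of $z_A$ on $b\D$ (which is one by continuity from $Z_0$) together with a direct calculation showing the $w$-contribution matches the unperturbed value.

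The main obstacle will be the scalar Riemann-Hilbert linearization: verifying that the three interior conditions are transverse to the three-dimensional homogeneous kernel in the correct $C^{1,\alpha}$-norms, which reduces to a finite-dimensional linear-algebra check once the kernel basis is written explicitly. A secondary subtlety is the precise justification of the area identity, which may require an additional symplectic-action argument in $(b\Sigma_{R\D},\lambda)$. Everything else---construction of $\mathcal F$, its $C^1$ dependence on $A$ via $C^{1,\alpha}(\bar\D,\H)\hookrightarrow C^\alpha(\bar\D,\H)$, the IFT application, and the promotion to the stated regularity---is routine given the machinery of Section 3.
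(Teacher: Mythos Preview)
Your overall scheme---perturb an explicit $J_{st}$-disc by IFT in $C^{1,\alpha}$---is the same as the paper's. The crucial gap is in the $\H_w$-block of your linearization. The boundary of the cylinder $\Sigma_{R\D}$ imposes no condition on $w$, so the only constraint you place on the variation $y_w$ is the single point condition $y_w(0)=v_w$. The problem $\bar\partial y_w=F_w$, $y_w(0)=v_w$ is \emph{not} uniquely solvable in $C^{1,\alpha}(\bar\D,\H_w)$: every holomorphic $h\in C^{1,\alpha}(\bar\D,\H_w)$ with $h(0)=0$ can be added, so the kernel is infinite-dimensional. Hence $D_Z\mathcal F(Z_0,0)$ is surjective but far from an isomorphism, and your IFT step fails as stated.

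The paper avoids this by adjoining an artificial boundary condition on $w$: it enlarges the domain by a real parameter $d\in\H_\R$ and adds $\Re w|_{b\D}=d$ as a third component of the map $\Lambda$. This is a Riemann--Hilbert condition of index $0$ for each coordinate $w_j$, so together with the index-$1$ condition $|z|=1$ the linearization $\dot\Lambda_0$ becomes surjective with a kernel one can control, and the surjective IFT yields a family of discs filling $\Sigma_\D$. The same device settles the area: since $\Re w_j$ is constant on $b\D$, Stokes gives $\int_\D dw_j\wedge d\bar w_j=0$ for every $j$, so the entire symplectic area comes from $z$ and equals $\pi R^2$. Your proposed area argument (``the $w$-contribution matches the unperturbed value'') does not work without a boundary condition on $w$---for a generic member of the infinite-dimensional family you would otherwise obtain, the $w$-components contribute nontrivially to the area. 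Replacing your point condition $Z(0)=(z^0,w^0)$ on the $w$-side by a boundary condition of Riemann--Hilbert type (as in the paper) repairs both defects at once.
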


Let us prove Theorem \ref{squiz} assuming Proposition \ref{ThDiscs}. We  essentially follow the original argument of Gromov \cite{Gr}.
\smallskip

\noindent
{\bf Proof of Theorem \ref{squiz}.}
Since $\Phi^*\omega=\omega$, the almost complex structure $J := \Phi_*(J_{st}) = d\Phi \circ J_{st} \circ d\Phi^{-1}$ is tamed by $\omega$.
Then the complex representation $\tilde A$ of $J$ is defined by (\ref{A}).
Fix $\varepsilon > 0$.
Let $\chi$ be a smooth cut-off function with support in $G$
and such that $\chi=1$ on $\Phi((r-\varepsilon)\bar\B^\infty)$.
Define $A=\chi\tilde A$. Let $p = \Phi(0)$. By Proposition \ref{ThDiscs} there exists a solution $Z$ of (\ref{holomorphy}) such that $p \in Z(\D)$, $Z(b\D) \subset b\Sigma$ and $\Area(Z) = \pi R^2$. Note that this disc is smooth in $\D$ by Theorem \ref{InnerDiscRegularity}. Denote by $D \subset \D$ a connected component of the pull-image $Z^{-1}(\Phi((r-\varepsilon)\B^\infty))$.
Then $X = \Phi^{-1}(Z(D))$ is a closed $J_{st}$-complex curve  in $(r-\varepsilon)\B^\infty$
with boundary contained in $(r-\varepsilon)b\B^\infty$. Furthermore, $0 \in X$ and $\Area(X) \leq \pi R^2$.

Consider the canonical projection $\pi_n: \H \to \C^n$, $\pi_n: Z = (Z_1,Z_2,...) \mapsto (Z_1,Z_2,...,Z_n)$. Put $Z' = \Phi^{-1} \circ Z$. Fix $n$ big enough such that $(\sum_{j=1}^n | Z_j'(\zeta) |^2)^{1/2} > (1-2\varepsilon)r$ for every $\zeta \in bD$. Then
$X_n:= (\pi_n \circ \Phi^{-1} \circ Z )(D) \cap (r- 2 \varepsilon)\B^n$ is a closed complex (with respect to $J_{st}$) curve through the origin in $\B^n$.
By the classical result due to Lelong (see, e.g., \cite{Ch}) we have $\Area(X_n) \geq \pi (r-2\varepsilon)^2$. Since $\Area(X_n) \le \Area(X)$ and $\varepsilon$ is arbitrary, we have $r\le R$ as desired.
$\blacksquare$
\medskip

{\bf Proof of Proposition \ref{ThDiscs}.}
Without loss of generality assume $R = 1$. Use the notation $p = (p_1,p_2,...,p_n,...) = (p_1,p') \in \H$. Denote by $\H_\R$ the real span of $\{ e_j\}$. Denote by ${\mathcal M}$ the space of $C^2$ maps from $\H$ to ${\mathcal L}(\H)$.

In the case where $A = 0$, i.e., $J = J_{st}$  we have the family of $J_{st}$-holomorphic discs
\begin{eqnarray}
\label{NS3}
\zeta \mapsto Z^0(\zeta) = (z(\zeta),w(\zeta)) = (\zeta,p'),
\end{eqnarray}
which clearly satisfies Proposition \ref{ThDiscs} because $p = (p_1,p') \in Z^0(\D)$ for every $p_1 \in \D$.

 We will prove that for $A$ close to $0$ in the $C^1$-norm this family can be perturbed to a family of $J$-holomorphic ($A = A_J$) discs proving the proposition.

Let  $A \in {\mathcal M}$, $d \in \H_\R$ and let $Z: \D \ni \zeta \mapsto Z(\zeta) = (z(\zeta),w(\zeta))$ be a map of class $C^{1,\alpha}(\D,\H)$. Consider the map
$$
\Lambda: (A,Z,d) \mapsto (\Xi,\Theta,\Gamma),
$$
where
$$\Xi = Z_{\bar\zeta} - A(Z) \bar{Z}_{\bar\zeta}, \,\,\, \zeta \in \D,$$
$$\Theta = z\bar{z} - 1,\,\,\, \zeta \in b\D,$$
$$\Gamma = \Re w - d, \,\,\, \zeta \in b\D.$$
 We view $A$ as a parameter considering $\Lambda_A = \Lambda(A,\bullet,\bullet)$.
Thus
$$
\Lambda_A: C^{1,\alpha}(\D,\H)\times \H_\R \to C^{\alpha}(\D,\H) \times  C^{1,\alpha}(b\D) \times  C^{1,\alpha}(b\D,\H).
$$
Consider the map $\tilde A: Z \mapsto A(Z)\bar{Z^0}_{\bar\zeta}$. The Fr\'echet derivative $\dot\Lambda$ of $\Lambda_A$ at the point $Z^0$ is the map
$$\dot\Lambda_A: C^{1,\alpha}(\D,\H) \times \H_\R \to C^{\alpha}(\D,\H) \times C^{1,\alpha}(b\D) \times C^{1,\alpha}(b\D,\H)$$
given by
$$\dot\Lambda_A: (\dot Z,d) \mapsto (\dot Z_{\bar\zeta} - L_A\dot Z - A(Z^0) \bar{\dot{Z}_\zeta}, 2\Re \bar\zeta \dot z(\zeta)|_{b\D}, \Re \dot w(\zeta)|_{b\D} -d).$$
Here $L_A  = d\tilde A(Z^0): C^{1,\alpha}(\D,\H) \to C^{1,\alpha}(\D,\H)$ is an $\R$-linear bounded operator continuously depending on  $dA(Z^0)$; furthermore $L_A = 0$ when $dA(Z^0) = 0$.

For $A = 0$ we obtain the operator
$$L_0: (\dot Z,d) \mapsto (\dot Z_{\bar\zeta}, 2\Re \overline\zeta \dot z(\zeta)|_{b\D}, \Re \dot w(\zeta)|_{b\D}-d).$$
It is easy to see that this is a  bounded surjective  operator. Indeed, let $L_0(\dot Z,d) = (Z',g,h)$ with $Z' = (z',w')$.

This is a linear Riemann-Hilbert boundary value problem which splits into two independent boundary value problems for $\dot w$ and $\dot z$ respectively. They can be explicitly solved by the Cauchy integral (more precisely, by the generalized Cauchy-Schwarz integral). Since the index of the boundary value problem for $\dot w$ is equal to $0$, the solution is given by
\begin{eqnarray*}
\dot w(\zeta) = \frac{1}{2\pi i} \int_\D \left (\frac{w'(t)}{t-\zeta} + \frac{\zeta \bar{w'(t)}}{1-\bar{t}\zeta} \right ) d\zeta \wedge d\bar{\zeta} + \frac{1}{2\pi i} \int_{b\D} g(t) \frac{t+\zeta}{t-\zeta} \frac{dt}{t} +  d + ic_0
\end{eqnarray*}
with $c_0 \in \R$. The index of the boundary value problem for $\dot z$ is equal to $1$. Therefore the solution is given by
\begin{eqnarray*}
\dot z(\zeta) = \frac{1}{2\pi i} \int_\D \left (\frac{z'(t)}{t-\zeta} + \frac{\zeta^3 \bar{z'(t)}}{1-\bar{t}\zeta} \right ) d\zeta \wedge d\bar{\zeta} + \frac{\zeta}{2\pi i} \int_{b\D} h(t) \frac{t+\zeta}{t-\zeta} \frac{dt}{t} + c_0\zeta + c_1 \zeta + c_2 \zeta^2
\end{eqnarray*}
with $c_{2-k} = -\bar{c_k}$, see for example \cite{Ve}. In view of the regularity properties of the Cauchy integrals (Theorem \ref{VectorCauchyTm}) this proves that $\dot \Lambda_0$ is a bounded surjective operator so the same is true for $\dot \Lambda_A$ when $A$ is close to $0$ in the $C^1$ norm.  By the implicit function theorem, we obtain a family of discs which is a small perturbation of (\ref{NS3}). Clearly this family fills $\Sigma_\D$. Since the real part of the $w_j$'s component is constant on $b\D$, it follows by Stokes' formula that $\int_\D dw_j \wedge dw_j = 0$. Hence the area of every disc is equal to 1. The proof is complete. $\blacksquare$

\section{Appendix I: Morrey's embedding}

We prove here the following
\begin{thm}
\label{Morrey}
(Morrey's embedding.) There is a bounded inclusion
$$
W^{1,p}(\D,\H) \to C^\alpha(\D,\H),
$$
where $p > 2$ and $\alpha = (p-2)/p$.
\end{thm}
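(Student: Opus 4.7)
The plan is to reduce the statement to the classical scalar Morrey embedding by testing against unit vectors in $\H$, exploiting the identity $\|v\|_\H = \sup_{\|h\|_\H=1} |\langle v,h \rangle|$ together with the Bochner integral property (\ref{Bochner2}) that continuous linear functionals commute with Bochner integration.

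For a fixed unit vector $h\in\H$ and $u\in W^{1,p}(\D,\H)$, I would define the scalar function $u_h(\zeta):=\langle u(\zeta),h\rangle$. By Cauchy--Schwarz $|u_h(\zeta)|\le \|u(\zeta)\|_\H$ pointwise, so $u_h\in L^p(\D)$ with $\|u_h\|_{L^p(\D)}\le\|u\|_{L^p(\D,\H)}$. Next I would check that weak derivatives commute with the pairing $\langle\,\cdot\,,h\rangle$: if $v=\partial u/\partial\zeta$ in the Bochner-weak sense, then for every scalar test function $\varphi$ we have $\int u\,\partial_\zeta\varphi=-\int v\,\varphi$ in $\H$, and applying $\langle\,\cdot\,,h\rangle$ under the integral via (\ref{Bochner2}) yields $\int u_h\,\partial_\zeta\varphi=-\int\langle v,h\rangle\,\varphi$. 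Hence $u_h\in W^{1,p}(\D)$ with $\|u_h\|_{W^{1,p}(\D)}\le\|u\|_{W^{1,p}(\D,\H)}$. The classical scalar Morrey embedding then supplies a continuous representative $\tilde u_h$ satisfying
$$
|\tilde u_h(\zeta)|\le C\|u\|_{W^{1,p}(\D,\H)},\qquad |\tilde u_h(\zeta_1)-\tilde u_h(\zeta_2)|\le C\|u\|_{W^{1,p}(\D,\H)}|\zeta_1-\zeta_2|^\alpha,
$$
with a constant $C=C(p)$ independent of $h$.

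To assemble these into a statement about $u$ itself, I would fix a countable dense subset $\{h_n\}$ of the unit sphere of $\H$ and take $E\subset\D$ to be the intersection over $n$ of the (full-measure) sets on which $u_{h_n}$ coincides with its continuous representative $\tilde u_{h_n}$ and on which $u$ is defined as a measurable $\H$-valued function. Then $E$ has full measure in $\D$, and for $\zeta_1,\zeta_2\in E$,
$$
\|u(\zeta_1)-u(\zeta_2)\|_\H=\sup_n|\tilde u_{h_n}(\zeta_1)-\tilde u_{h_n}(\zeta_2)|\le C\|u\|_{W^{1,p}(\D,\H)}|\zeta_1-\zeta_2|^\alpha,
$$
and similarly for the sup bound. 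Since the restriction of $u$ to $E$ is uniformly $\alpha$-Hölder, it extends uniquely to a continuous function $\tilde u\in C^\alpha(\D,\H)$ which agrees with $u$ a.e., and the bound $\|\tilde u\|_{C^\alpha(\D,\H)}\le C\|u\|_{W^{1,p}(\D,\H)}$ is then immediate.

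The main technical obstacle is really only bookkeeping: one needs the weak derivative to be preserved by pairing with $h$ (a direct definition-chase using scalar-valued test functions and property (\ref{Bochner2})), and one needs a single full-measure set $E$ that works simultaneously for all $h_n$ to pass from the scalar estimates to the $\H$-valued Hölder bound. Neither presents a genuine difficulty; essentially the entire content of the theorem is delegated to the classical scalar Morrey embedding, with Bochner's theory supplying the linearity and dominated-convergence needed to transfer the estimates.
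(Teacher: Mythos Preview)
Your argument is correct, but it takes a genuinely different route from the paper's. The paper proves the theorem by writing $u=v+h$ with $v=T(\partial u/\partial\bar\zeta)\in C^\alpha(\D,\H)$ via the vector-valued Cauchy--Green operator (Proposition~\ref{VectIntPropI}(ii)), so that $h$ is holomorphic; it then does the same for $\bar u$, expresses $h$ through the Cauchy type integral $K$ of a $C^\alpha$ function, and invokes Theorem~\ref{VectorCauchyTm}(i) to bound everything. In other words, the paper leverages the vector-valued integral operator machinery it has already built up in Section~3.

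Your approach instead delegates the analysis entirely to the scalar Morrey embedding by pairing with a countable dense set of unit vectors and reassembling via $\|v\|_\H=\sup_n|\langle v,h_n\rangle|$. This is more elementary in that it avoids the vector-valued Cauchy integral theory altogether and uses only separability of $\H$ and the commutation of linear functionals with Bochner integration. The paper's route, by contrast, is natural in context because the operators $T$ and $K$ are already available and used throughout; it also gives a decomposition of $u$ (holomorphic plus $T$ of an $L^p$ function) that is occasionally useful in its own right. Your argument is shorter and transferable to any separable Banach space whose scalar Sobolev theory is known, at the cost of not exhibiting any structural decomposition of $u$.
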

\proof Let $u \in W^{1,p}(\D,\H)$. Then $\partial u /\partial\bar\zeta \in L^p(\D,\H)$ (weak derivative). Then $v = T \partial u /\partial\bar\zeta \in C^\alpha(\D,\H)$. Hence the $\H$-valued function $h = u - v$ is holomorphic in $\D$. But $\bar{u} \in W^{1,p}(\D,\H)$ as well.
Hence $ u = v +h = \bar{v_1} + \bar{h_1}$, where $v,v_1 \in C^{\alpha}(\D,\H)$ and the vector functions $h$ and $h_1$ are holomorphic in $\D$. Then $h-\bar{h_1} = v_0 = -v + \bar{v_1} \in C^\alpha(\D,\H)$. Then $h = K v_0 + \bar{h_1(0)}$, where $K$ denotes the Cauchy type integral over $b\D$. We have
\begin{align*}
&\|Kv_0\|_{C^\alpha(\D,\H)} \le\const \|v_0\|_{C^\alpha(b\D,\H)}\le\const\|u\|_{W^{1,p}(\D,\H)},\\
&\|h_1(0)\| \le \const \|h_1\|_{L^p(\D,\H)}\le\const (\|u\|_{L^p(\D,\H)}+\|v_1\|_{C^\alpha(\D,\H)})\le\const\|u\|_{W^{1,p}(\D,\H)}.
\end{align*}
Hence, $\|u\|_{C^\alpha(\D,\H)}\le\const\|u\|_{W^{1,p}(\D,\H)}$.
The proof is complete.
$\blacksquare $

\section{Appendix II: Proof of Theorem \ref{TheoCauchy1}}

We use the notation
$$
d^2t:= \frac{dt \wedge d\bar{t}}{2 \pi i}.
$$
Consider  the operators
\[
T^Qf = Q T(f/Q)\quad
\text{and}\quad
S^Q f = \partial T^Q f.
\]
Here $S^Q f$ is defined for $f\in C_0^\infty(\D)$ as a pointwise derivative whenever it exists. As usual, $C_0^\infty(\D)$ denotes the space of complex smooth functions with compact support in $\D$.

\begin{lemma}
\label{LemCauchy1}
It suffices to show that $S^Q$ extends as a bounded operator $L^p(\D) \to L^p(2\D)$.
\end{lemma}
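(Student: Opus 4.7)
The plan is to split
\[
T_Q f(z) = T^Q f(z) + Uf(z),
\qquad
Uf(z) = \frac{Q(z)}{2\pi i}\int_\D \frac{\overline{f(t)}\, dt\wedge d\bar t}{\overline{Q(t)}\,(\bar t z - 1)},
\]
into the ``Cauchy'' piece $T^Q f$ and the ``reflection'' piece $Uf$, and to handle them separately. On $\D\times\D$ the kernel $1/(\bar t z-1)$ satisfies $|\bar t z-1|\ge 1-|t||z|>0$, so it is never singular in the interior of $\D$; only the fractional weight $1/\overline{Q(t)}$ contributes a boundary singularity at the points $z_k$. Because the hypothesis $p<p_2=\min_k 2/(1-\alpha_k)$ forces $1/\overline{Q}$ to lie in $L^{p'}_{\mathrm{loc}}$ near each $z_k$, a H\"older argument, applied separately to the kernel and to its $\partial_z$-derivative $\bar t/(\bar t z-1)^2$, yields $U:L^p(\D)\to W^{1,p}(\D)$ boundedly (in fact into $C^1(\bar\D)$ modulo the boundary regularity of $Q$).

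For the Cauchy piece, the assumption gives $\partial T^Q f=S^Q f\in L^p(2\D)$ with $\|S^Q f\|_{L^p(2\D)}\le C\|f\|_{L^p(\D)}$. Since $f/Q$ is supported in $\bar\D$, the function $T(f/Q)$ is holomorphic on $\C\setminus\bar\D$, and since $Q$ is holomorphic on $\D\cup(\C\setminus\bigcup_k\Gamma_k)$, one has
\[
\bar\partial T^Q f \;=\; Q\cdot\bar\partial T(f/Q) \;=\; f\cdot\chi_\D
\]
almost everywhere on $2\D$; in particular $\bar\partial T^Q f\in L^p(2\D)$ with the same bound. Hence $T^Q f\in W^{1,p}(2\D)$, and Morrey's embedding (Theorem~\ref{Morrey}) gives $T^Q f\in L^p(2\D)$, so a fortiori $T^Q f\in L^p(\D)$ with the same control. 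Combined with the pointwise identities $\partial T^Q f=S^Q f$ and $\bar\partial T^Q f=f$ on $\D$, this shows $T^Q:L^p(\D)\to W^{1,p}(\D)$ is bounded, and consequently $S^Q:L^p(\D)\to L^p(\D)$ is bounded.

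Summing the two pieces then delivers $T_Q:L^p(\D)\to W^{1,p}(\D)$ and $S_Q:L^p(\D)\to L^p(\D)$, which is Theorem~\ref{TheoCauchy1}. The main obstacle is the $W^{1,p}$-boundedness of the auxiliary operator $U$: the boundary weight $1/\overline{Q(t)}$ carries fractional singularities of order $\alpha_k$ at each $z_k$, and differentiating in $z$ inflates the kernel by a factor $\bar t/(\bar t z-1)^2$ that becomes large near $b\D$. The careful bookkeeping matching the local $L^{p'}$-integrability of $1/Q$ (guaranteed exactly by $p<p_2$) against this enlarged kernel is the delicate part, but it reduces to an application of Schur's test after localizing near each $z_k$.
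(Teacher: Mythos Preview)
Your decomposition $T_Q=T^Q+U$ matches the paper's, but your treatment of the reflection piece $U$ diverges from the paper's argument and contains a real gap. The paper does not bound $\partial_z U$ directly. Instead (under the normalization $\sum_k\alpha_k=1$) it computes the identity
\[
Uf(z)=\rho(z)\,\overline{T^Q f(1/\bar z)},\qquad |\rho|=1,\ \rho\ \text{locally constant},
\]
so that
\[
S_Q f(z)=S^Q f(z)-z^{-2}\rho(z)\,\overline{S^Q f(1/\bar z)}.
\]
The substitution $z\mapsto 1/\bar z$ maps $\D\setminus\tfrac12\D$ onto $2\D\setminus\D$, and this is precisely why the hypothesis is stated for $S^Q$ into $L^p(2\D)$: the bound on the annulus $2\D\setminus\D$ is what controls $\partial_z U$ on $\D\setminus\tfrac12\D$. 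Near the origin the paper handles $\partial_z U$ by an elementary H\"older estimate (there the kernel $(\bar t z-1)^{-1}$ is uniformly bounded away from zero). Your argument, by contrast, never exploits the $2\D$ hypothesis for the reflection piece at all; you only use it for $T^Q f$ itself, where the bound on $\D$ would already suffice. So you have missed the point of why $2\D$ appears.

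More seriously, your direct claim that $U:L^p(\D)\to W^{1,p}(\D)$ follows from ``Schur's test after localizing near each $z_k$'' is not justified. The $z$-derivative of the kernel, namely $\bar t/(\bar t z-1)^2$, is a genuine singular kernel as $(t,z)$ approaches the boundary diagonal of $b\D\times b\D$; after the reflection $z\mapsto 1/\bar z$ it is essentially the Beurling kernel with the fractional weight $Q(z)/\overline{Q(t)}$. Schur's test applies to kernels with uniformly bounded row and column integrals, which fails here exactly at the boundary. Bounding this operator on $L^p$ is not easier than the original weighted Beurling estimate for $S^Q$ --- it is the same estimate in disguise --- so the hand-wave hides the entire difficulty. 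Two smaller slips: the integrability $1/\overline{Q}\in L^{p'}(\D)$ follows from $p>p_1$ (i.e., $\alpha_k p'<2$), not from $p<p_2$; and your appeal to Morrey's embedding to get $T^Q f\in L^p$ is in the wrong direction (Morrey goes from $W^{1,p}$ to $C^\alpha$, not the reverse).
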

\begin{proof} Suppose that $S^Q: L^p(\D) \to L^p(2\D)$ is bounded.  We show that $S_Q:L^p(\D) \to L^p(\D)$ is bounded. We make all estimates for $f \in C^\infty_0(\D)$; the conclusion will follow by density of $C^\infty_0(\D)$ in $L^p(\D)$.

{\it Step 1.} We first show $S_Q:L^p(\D) \to L^p(\frac{1}{2}\D)$ is bounded.

Put $T_Q = T^Q + \tilde T$, $S_Q = S^Q + \tilde S$. Note that  $\tilde S = \partial \tilde T$ since $f \in C^\infty_0(\D)$.  The assumption  $p > p_1$ implies  $Q^{-1} \in L^q(\D)$, $1/p + 1/q = 1$. We have
$$
\tilde Tf(z) = Q(z) \int_\D \frac{\bar{f(t)}d^2t}{\bar{Q(t)}(\bar{t} z -1)}.
$$
If now $| z | \le 1/2$, then $|\bar{t} z -1| \ge 1/2$ and by H\"older inequality
$$
|\tilde S f(z)| \le \const \|f\|_p \|Q^{-1}\|_q,
$$
that is, $\tilde S:L^p(\D) \to L^\infty(\frac{1}{2}\D)$ is bounded. Hence $\tilde S: L^p(\D) \to L^\infty(\frac{1}{2}\D)$ is bounded.

{\it Step 2.} We now express $T_Q$ in terms of $T^Q$, namely,
\begin{eqnarray*}
T_Qf(z) = T^Q f(z) + Q(z) z^{-1} \bar{Q(\bar{z}^{-1})}\,^{-1} \bar{T^Qf(\bar{z}^{-1})}.
\end{eqnarray*}
We have
\begin{eqnarray*}
Q(z)\bar{Q(\bar{z}^{-1})}\,^{-1} = \prod_{k=1}^n (z-z_k)^{\alpha_k} \bar{ (\bar z\,^{-1}-z_k)}\,^{-\alpha_k} = \prod_{k=1}^n (-z z_k)^{\alpha_k} =\rho(z) z^{\sum\alpha_k},
\end{eqnarray*}
here $\rho(z)$ is a locally constant function in $\C \setminus \cup_k \Gamma_k$ and $| \rho(z) | = 1$.
For simplicity we use the assumption
\begin{eqnarray}
\label{Cauchy6}
\sum_{k=1}^n \alpha_k = 1,
\end{eqnarray}
which is the case of our application,  although the result holds without this restriction. We now have
$$T_Qf(z) = T^Qf(z) + \rho(z) \bar{T^Qf(\bar{z}^{-1})},$$
$$S_Qf(z) = S^Qf(z) - z^{-2} \rho(z) \bar{S^Qf(\bar{z}^{-1})}$$
We note that the pointwise and weak derivatives coincide for $f\in C^\infty_0(\D)$. Since $S^Q: L^p(\D) \to L^p(2\D)$ is bounded, it
 follows by the substitution $z \mapsto \bar{z}^{-1}$, that the operator $S_Q: L^p(\D) \to L^p(\D \setminus\frac{1}{2}\D)$ is bounded. Combining with Step 1, we conclude that  $S_Q:L^p(\D) \to L^p(\D)$ is bounded.
\end{proof} $\blacksquare$
\smallskip

Introduce
$$
\tilde S^Qf:= Q \, S(f/Q).
$$
Since $\partial Q = Q \sum_{k=1}^n \alpha_k (z - z_k)^{-1}$, one can see
$$
S^Q = \sum_{k=1}^n \alpha_k \tilde S_k + (1 - \sum_{k=1}^n \alpha_k) \tilde S^Q.
$$
Here $\tilde S_k:= \tilde S^{Q_k}$ and $Q_k(z) = (z-z_k)^{-1} Q(z)$. With our assumption (\ref{Cauchy6}), this gives
$$
S^Q = \sum_{k=1}^n \alpha_k \tilde S_k.
$$
Slightly changing notation, we now allow $Q$ to have negative powers, that is,
$$
Q(z) = \prod_{k=1}^{m} (z-z_k)^{\alpha_k} \prod_{k=m+1}^n (z-z_k)^{-\alpha_k},
$$
$0 < \alpha_k < 1$ for $1\le k \le n$. Our theorem is a consequence of the following result from \cite{Mo}.

\begin{thm}
\label{TheoCauchy2}
Let $p_1' < p < p_2'$, here
$$1 < p_1' = \max_{1 \le k \le m} \frac{2}{2-\alpha_k} < 2 < p_2' = \min_{k> m} \frac{2}{\alpha_k}.$$
Then the operator $\tilde S^Q: L^p(\D) \to L^p(2\D)$ is bounded.
\end{thm}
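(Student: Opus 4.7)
The statement is a weighted $L^p$-bound for the Beurling transform with power weights concentrated at finitely many boundary points. My plan is to reduce to the case of a single weight factor and then to a weighted Beurling estimate with a power weight, which is the classical content of the $A_p$ condition.

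\textbf{Step 1: Reformulation as a weighted estimate.} Substituting $g = f/Q$, the inequality $\|Q\cdot S(f/Q)\|_{L^p(2\D)}\le C\|f\|_{L^p(\D)}$ is equivalent to
\[
\int_{2\D} |Sg(z)|^p |Q(z)|^p\, d^2z \;\le\; C^p \int_{\D} |g(z)|^p |Q(z)|^p\, d^2z.
\]
Thus I need to show that the Beurling transform $S$ is bounded from $L^p(\D,w)$ to $L^p(2\D,w)$ with weight $w=|Q|^p=\prod_{k=1}^m|z-z_k|^{p\alpha_k}\prod_{k>m}|z-z_k|^{-p\alpha_k}$.

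\textbf{Step 2: Localization.} Let $\{\chi_0,\chi_1,\ldots,\chi_n\}$ be a smooth partition of unity on $\overline{2\D}$ with $\chi_k$ supported in a small disk $U_k$ around $z_k$ (so that the $U_k$ are pairwise disjoint) and $\chi_0$ supported where $|z-z_k|$ is bounded below for every $k$. Decompose $g=\sum_k g_k$ with $g_k=\chi_k g$ and write $Sg_k$ restricted to each $U_j$. The ``off-diagonal'' pieces (where $z\in U_j$ and $t\in U_k$ with $j\ne k$, or either index is $0$) give rise to operators with bounded kernels on $\operatorname{supp}\chi_j\times\operatorname{supp}\chi_k$, which are easily bounded by Hölder or Schur's test together with $Q,\,Q^{-1}\in L^q$ for appropriate $q$ (this uses the assumptions on the $\alpha_k$). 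The ``on-diagonal'' piece with $j=k=0$ is the unweighted bound on a compact set bounded away from the singularities, where $|Q|$ is pinched between two positive constants, so it reduces to the standard Calderón--Zygmund bound $S:L^p\to L^p$ for $1<p<\infty$.

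\textbf{Step 3: The single-weight estimate.} There remain the diagonal pieces on $U_k$. After translating $z_k$ to the origin and factoring out the smooth, bounded and bounded-below quantity $Q(z)/(z-z_k)^{\pm\alpha_k}$, the matter reduces to proving that, for a single exponent $\beta\in\{\pm\alpha_k\}$,
\[
T_\beta g(z):=|z|^{\beta}\cdot \mathrm{p.v.}\int \frac{|t|^{-\beta}\,g(t)}{(t-z)^2}\,d^2t
\]
is bounded on $L^p$ of a small disk around $0$. This operator commutes (up to phase) with dilations, so after the substitution $z=e^s e^{i\theta}$ it becomes a convolution on the multiplicative group $\R_+\times S^1$; its $L^p$ boundedness is equivalent, via the Mellin--Fourier transform in $s$, to an explicit $L^\infty$-multiplier estimate, which holds exactly when the power weight $|z|^{p\beta}$ lies in $A_p(\C)$, i.e. $-2<p\beta<2(p-1)$. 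Inserting $\beta=\alpha_k$ ($k\le m$) gives $p>2/(2-\alpha_k)$, and $\beta=-\alpha_k$ ($k>m$) gives $p<2/\alpha_k$. Intersecting over $k$ yields precisely $p_1'<p<p_2'$.

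\textbf{Main obstacle.} The principal technical issue is handling the cancellation in the p.v. integral at the singular point $z_k$: the naive kernel $|Q(z)|\,|Q(t)|^{-1}|t-z|^{-2}$ is not locally integrable, so Schur's test does not apply on the diagonal piece. This is precisely why one must pass through either the $A_p$/Mellin-multiplier analysis above, or, as Mori did originally, a Stein-type analytic interpolation treating each $\alpha_k$ as a complex parameter in the strip $0\le\Re\alpha_k\le1$: at $\Re\alpha_k=0$ the weight is bounded and one has the $L^2$-isometry property of $S$, while one manufactures an endpoint bound by explicit computation on a model operator, and then interpolates. Everything else in the argument (localization, composition of one-factor estimates, $L^p$-boundedness of $S$ away from singularities) is standard once this single-weight core estimate is in hand.
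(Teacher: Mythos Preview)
Your proposal is essentially correct but follows a genuinely different route from the paper. Both arguments localize to reduce to a single power weight at one boundary point (your Step~2 is the paper's Step~3), but the core single-weight estimate is handled very differently. The paper, following Monakhov, writes $\tilde S^Q = S + \tilde S$ with
\[
\tilde S f(z)=\int_\D f(t)\,\frac{(z-z_0)^{\pm\alpha}-(t-z_0)^{\pm\alpha}}{(t-z_0)^{\pm\alpha}(t-z)^2}\,d^2t,
\]
uses $|(z-z_0)^{\alpha}-(t-z_0)^{\alpha}|\le|t-z|^{\alpha}$ to lower the singularity of the kernel to $|t-z|^{\alpha-2}$, and then estimates $\tilde S$ by H\"older against the elementary two-point integral bound $\int_\D|t-z_0|^{-\alpha}|t-z|^{-\beta}\,|d^2t|\le M|z-z_0|^{2-\alpha-\beta}$ (Vekua). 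This is entirely self-contained and uses no weighted theory. Your route instead invokes the $A_p$ machinery: $|z|^{p\beta}\in A_p(\R^2)$ iff $-2<p\beta<2(p-1)$, and the Beurling transform is bounded on $L^p(w)$ for $w\in A_p$. This is correct and conceptually cleaner, but imports Coifman--Fefferman, whereas the paper's argument is elementary. One small correction: Monakhov's original proof is \emph{not} by Stein interpolation; it is precisely the subtraction-plus-H\"older argument just described, so your parenthetical attribution in the ``Main obstacle'' paragraph is off.
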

Note that for Theorem \ref{TheoCauchy1} we need $Q$ with only one negative factor $(z-z_k)^{-\alpha_k}$, in which $\alpha_k$ is the old $1-\alpha_k$.
\smallskip

\proof
We proceed in three steps.

{\it Step 1.} We first consider the case where
$$
Q(z) = (z-z_0)^{-\alpha}, \; | z_0 | = 1, \; 0 < \alpha < 1.
$$
Then $\tilde S^Q = S + \tilde S$ with
$$
\tilde S f(z) = (z-z_0)^{-\alpha}\int_\D f(t) \frac{(t-z_0)^\alpha - (z-z_0)^\alpha}{(t-z)^2} d^2t.
$$
Since $| (t-z_0)^\alpha - (z - z_0)^\alpha | \le | t - z |^\alpha$,
we have
$$
|\tilde S f(z) | \le | z - z_0 |^{-\alpha}\int_\D | f(t) | | t - z |^{\alpha - 2} |d^2t |.
$$
Introduce $\gamma > 0$, which we will choose later. By the H\"older inequality for the measure $| t - z |^{\alpha-2}| d^2 t |$ we obtain
\begin{align*}
|\tilde S f(z)| &\le |z-z_0|^{-\alpha} \left(\int_\D |f(t)|^p |t-z_0 |^{\gamma p}|t-z|^{\alpha-2} |d^2 t|\right)^{1/p}\\
& \; \times \left(\int_\D  |t-z_0|^{-\gamma q}|t-z|^{\alpha-2}|d^2 t| \right)^{1/q}.
\end{align*}
We use the classical estimate
\begin{eqnarray}
\label{Cauchy7}
J(\alpha,\beta)  = \int_\D | t- z_0 |^{-\alpha} | t - z |^{-\beta} | d^2t| \le M_{\alpha\beta} | z - z_0 |^{2 -\alpha - \beta}
\end{eqnarray}
here $0 < \alpha < 2$, $0 < \beta < 2$, $\alpha + \beta > 2$ (see \cite{Ve}, proof of Theorem 1.19). Suppose that
\begin{eqnarray}
\label{Cauchy8}
\gamma q < 2, \quad \gamma q -\alpha > 0
\end{eqnarray}
For $\sigma = -\alpha p
+ (\alpha - \gamma q) p/q = -\alpha - \gamma p$ we have
$$
| \tilde S f(z) |^p \le \const | z - z_0 |^\sigma \int_\D | f(t) |^p | t - z_0 |^{\gamma p} | t - z |^{\alpha - 2} | d^2 t |
$$
and
$$
\| \tilde S f  \|^p_{L^p(2\D)} \le \const \int_{2\D} \left ( \int_\D | f(t) |^p | t - z_0 |^{\gamma p}| t - z |^{\alpha-2} | z - z_0 |^\sigma |d^2t| \right ) |d^2z|.
$$
Suppose now that
\begin{eqnarray}
\label{Cauchy9}
\alpha + \gamma p = -\sigma < 2.
\end{eqnarray}
Integrating first with respect to $z$ and using (\ref{Cauchy7}) we obtain
$$
\| \tilde Sf \|_p^p \le \const \int_\D | f(t) |^p | t - z_0 |^{\gamma p} | t - z_0 |^{\alpha + \sigma} | d^2 t | = \const \| f \|_p^p.
$$
We now show that $\gamma > 0$ satisfying (\ref{Cauchy8}, \ref{Cauchy9}) does exist, that is,
\begin{align*}
&\gamma < \gamma_1 = 2/q = 2(p-1)/p,\\
&\gamma < \gamma_2 = (2 - \alpha)/p,\\
&\gamma > \gamma_3 = \alpha/q = \alpha (p-1)/p.
\end{align*}
Obviously $\gamma_3 < \gamma_1$.
Since $p < p_2'$, we have $\alpha p<2$. Hence $\gamma_3 < \gamma_2$, and the desired $\gamma$ exists.
\smallskip

{\bf Remark.} In \cite{Mo} $\gamma = \alpha/p$ which works only if $1 + \alpha/2 < p < 2$.
\smallskip

{\it Step 2.} We now consider another special case
$$Q(z) = (z - z_0)^\alpha, \,\,\,| z_0 | = 1, \,\,\, 0 < \alpha < 1.$$
In this case the analysis is similar, even simpler. Again put $\tilde S^Q = S + \tilde S$ with
$$\tilde S f(z) = \int_\D f(t) \frac{(z-z_0)^\alpha - (t-z_0)^\alpha}{(t-z_0)^\alpha(t-z)^2}d^2t.$$
Then for some $\beta > 0$ using the H\"older inequality
\begin{align*}
| \tilde S f(z) | &\le \int_\D | f(t) | | t - z_0|^{-\alpha}| t-z|^{\alpha-2}| d^2t| \\
&\le \left ( \int_\D | f(t) |^p | t- z_0 |^{\beta p}| t-z|^{\alpha-2}| d^2t| \right )^{1/p}\left ( \int_\D | t- z_0 |^{-(\alpha +\beta) q}| t-z|^{\alpha-2}| d^2t| \right )^{1/q}.
\end{align*}
Using (\ref{Cauchy7}) again
$$
| \tilde S f(z) |^p \le \const | z - z_0|^\sigma \int_\D | f(t) |^p | t- z_0 |^{\beta p}| t-z|^{\alpha-2}| d^2t|,
$$
where $\sigma = -\alpha - \beta p$ provided that
\begin{eqnarray}
\label{Cauchy10}
 (\alpha + \beta)q < 2.
\end{eqnarray}
Integrating first with respect to $z$ and using (\ref{Cauchy7}) we obtain
\begin{align*}
\| \tilde S f \|_{L^p(2\D)}^p &\le \const \int_{2\D}\left (  \int_\D | f(t) |^p | t-z_0|^{\beta p}
| t - z |^{\alpha-2}| z - z_0|^\sigma | d^2t| \right ) | d^2 z|\\
&\le \const \int_\D | f(t) |^p | t - z_0 |^{\beta p} | t- z_0 |^{\alpha + \sigma} | d^2t| =\const \| f \|_p^p
\end{align*}
provided that
\begin{eqnarray}
\label{Cauchy11}
\alpha + \beta p = -\sigma < 2.
\end{eqnarray}
Since $p > p_1'$, we have $p > 2/(2-\alpha)$ and $\alpha q < 2$. Therefore, for sufficiently small $\beta > 0$ both (\ref{Cauchy10}) and (\ref{Cauchy11}) are satisfied.
\smallskip

{\bf Remark.} In \cite{Mo} $\beta = [(2-\alpha)p - 2]/p^2$, which satisfies both (\ref{Cauchy10}), (\ref{Cauchy11}), but the particular choice is unimportant.
\smallskip

{\it Step 3.} Finally, consider the general case. Let $U_k = \{ z \in \C: | z - z_k | < \delta \}$, $1 \le k \le n$, $\delta > 0$ is small enough so that the closed discs $\bar{U_k}$ are disjoint.
Put $U_0 = \D \setminus \cup_k U_k$. Then
$$
\tilde S^Q f = \sum_{k=0}^n S_k f, \qquad
S_k f(z) = Q(z) \int_{U_k \cap\D} \frac{f(t) d^2t}{Q(t)(t-z)^2}.
$$
We claim that every $S_k:L^p(\D) \to L^p(2\D)$ is bounded. For definiteness choose $1 \le k \le m$, for other
$k$ the analysis is similar. Let $Q(z) = \tilde Q(z) (z-z_k)^{\alpha_k}$. Define
\begin{equation*}
f_k(z)=\begin{cases}
f(z)/\tilde Q(z)& \text{if}\quad z\in U_k\cap\D, \\
0 & \text{otherwise}.
\end{cases}
\end{equation*}
Then $f_k \in L^p(\D)$. Introduce the function
$$
g_k(z) = (z-z_k)^{\alpha_k}\int_\D \frac{f_k(t)d^2t}{(t-z_k)^{\alpha_k}(t-z)^2}.
$$
By one of the two special cases considered before, $g_k \in L^p(2\D)$ and $S_k f= \tilde Q \, g_k$.

Note that $\tilde Q$ is bounded in a neighborhood of $\bar{U_k}$, and
$p < p_2'$ implies $\tilde Q \in L^p(2\D)$.
Every $g_k$ is bounded in a neighborhood of each $z_j$, $j \neq k$.
Moreover,
$$
\| g_k \|_{L^\infty(U_j)} \le \const \| f \|_{L^p(\D)}\quad
\text{and}\quad
\| g_k \|_{L^p(2\D)} \le \const \| f \|_{L^p(\D)}.
$$
Hence $\| S_k f \|_{L^p(2\D)} \le \const \| f \|_{L^p(\D)}$, which completes the proof of the theorem.

\end{document}